\setlist[enumerate]{noitemsep}
\definecolor{pink}{rgb}{1,.2,.6}
\definecolor{blue}{rgb}{.2,.6,.75}
\definecolor{green}{rgb}{.4,.7,.4}
\newtheorem{prop}{Proposition}
\newtheorem{exam}[prop]{Example}
\newtheorem{theorem}[prop]{Theorem}
\newtheorem{cor}[prop]{Corollary}
\newtheorem{lemma}[prop]{Lemma}
\newtheorem{rem}[prop]{Remark}
\newtheorem{conjecture}{Conjecture}
\theoremstyle{definition}
\def\Z{\mathbb{Z}}
\def\Q{\mathbb{Q}}
\def\E{\mathcal{E}}
\title{Constant root number on integer fibres of elliptic surfaces}
\author{Rena Chu$^1$ \and Julie Desjardins$^2$}
\date{%
    $^1$Duke University, Durham, NC, U.S.A.\\%
    rena.chu@duke.edu\\%
    $^2$University of Toronto, Toronto, ON, Canada\\%
    julie.desjardins@utoronto.ca\\[2ex]%
    \today
}
\begin{document}

\maketitle

\begin{abstract}Rizzo showed that the family of elliptic curves $\mathcal{W}(t) :y^2=x^3+tx^2-(t+3)x+1$, a well-known example of Washington, has root number $W(\mathcal{W}(t))=-1$ for all $t\in\Z$ \cite{Riz}. 
In this paper we generalize this example and identify the families of small degree on which this phenomenon happens.  Motivated by results from \cite{BDD} and \cite{Desjardins5}, we study in detail the two families $\mathcal{F}_s(t):y^2=x^3+3tx^2+3sx+st$ and $\mathcal{L}_{w,s,v}(t): wy^2=x^3+3(t^2+v)x^2+3sx+s(t^2+v)$ and describe necessary and sufficient conditions for which subfamilies of $\mathcal{F}_s(t)$ have constant root number on integer fibres. We further prove similar but partial results on $\mathcal{L}_{w,s,v}(t)$. Our results give examples of subfamilies for which there is rank elevation at integer fibres.

\end{abstract}

\section{Introduction}

Throughout this paper\footnote{2010 Mathematics Subject Classification:14J27, 14D10, 11G05\\Keywords: root number; elliptic curves; elliptic surfaces
}, by \emph{family of elliptic curves} -- or simply \emph{family} -- we mean a collection given by the following Weierstrass equation:     \begin{equation}\label{eq: WE}
    \mathcal{E}(t):y^2=x^3+F(t)x+G(t),
    \end{equation}
where $F(t),G(t)$ are elements of $\Q[t]$ such that $\Delta(t)=-16\left(4F(t)^3+27G(t)^2\right)\not=0$.\footnote{We generally exclude the case where $\Delta\in\Q$ as well, since it is the same elliptic curve for each $t$ and is therefore not interesting in our settings.} Each curve of the family obtained by evaluating the Weierstrass equation in \eqref{eq: WE} at $t\in\Q$ is called a \emph{fibre}, as inspired by the theory of elliptic surfaces, and a fibre at $t\in\Z$ is called an \emph{integer fibre}.  Such a family is said to be \emph{isotrivial} if the $j$-invariant $j(t)=-1728(4F(t))^3/\Delta(t)$ is constant, and \emph{non-isotrivial} otherwise. We write $\mathcal{E}$ when we refer to this family as an elliptic surface.

It is widely believed that in a non-isotrivial family of elliptic curves, the rank takes a non-zero value for infinitely many values of $t\in\Q$: this is a variant of the celebrated Goldfeld conjecture, confirmed by the second author \cite{Desjardins1} through the variation of the root number. The root number of an elliptic curve is conjecturally equal to the parity of the geometric rank (weak Birch and Swinnerton-Dyer conjecture), and therefore a negative root number predicts a non-zero rank of the Mordell-Weil group. 

We will say that a family $\E(t)$ has \emph{constant root number over $\Z$} or \emph{over the integer fibres} if the value of $W(\E(t))$ is the same for any choice of $t\in\Z$. It can happen that a family has constant root number on integer fibres. For instance, in 2003 Rizzo showed that the family $$\mathcal{W}(t) :y^2=x^3+tx^2-(t+3)x+1$$ is such that $W(\mathcal{W}(t))=-1$ for every integer $t$ \cite{Riz}.  This family was first studied by Washington \cite{Washington} who proved numerically that the rank of a fibre at $t\in\Z$ with $|t| \leq 1000$ is odd.

\subsection{Results}

In this paper we look at families whose Weierstrass equation has coefficients of bounded degree. Bettin, David and Delauney  \cite{BDD} showed that there are six different non-isotrivial such families and Desjardins \cite{Desjardins5} proved that all but two of these families have varying root numbers on the integer fibres. We are thus motivated to study the two remaining families in more detail:
    \begin{align*}
        \mathcal{F}_s(t)&: y^2=x^3+3tx^2+3sx+st,\\
        \mathcal{L}_{w,s,v}(t)&: wy^2=x^3+3(t^2+v)x^2+3sx+s(t^2+v).
    \end{align*}
We determine with precision the $\mathcal{F}$-families with constant root number on integer fibres, while giving a large class of examples of $\mathcal{L}$-families with this property. (See our upcoming paper for the complete $\mathcal{L}$-family case.) Our main result is the following.

\begin{theorem}\label{mainthm} Let $s, a, b$ be non-zero integers such that $s=-3r^2$ for some non-zero integer $r$. Then the family $\mathcal{F}_s(au+b)$ has constant root number as $u$ varies over $\Z$ if and only if the three conditions holds:
\begin{enumerate}
    \item $\nu_p(b)<\nu_p(a)$ or $\nu_p(s)/2\leq \nu_p(a)\leq \nu_p(b)$ for all $p\geq5$ such that $p\mid s$, and
    \item $\nu_3(b)<\nu_3(a)$ or $(\nu_3(s)-3)/2\leq \nu_3(a)\leq \nu_3(b)$, and 	
    \item one of the following holds:
		\begin{enumerate}
		\item $\nu_2(b)+2<\nu_2(a)$
		\item $\nu_2(b)+2=\nu_2(a)$ and $2\nu_2(b)=\nu_2(s)$
		\item $\nu_2(b)+2=\nu_2(a)$, $2\nu_2(b)\neq \nu_2(s)$ and $\nu_2(s)\equiv 0 \pmod 4$ and
			\begin{enumerate}
			\item $\nu_2(s)-2\nu_2(b)<0$, or
			\item $\nu_2(s)-2\nu_2(b)\equiv 2 \pmod 4, >0$, or
			\item $\nu_2(s)-2\nu_2(b)\equiv 0 \pmod 4, >0$ and $b_2\equiv 1 \pmod 4$
			\end{enumerate}
		\item $\nu_2(b)+2=\nu_2(a)$, $2\nu_2(b)\neq \nu_2(s)$ and $\nu_2(s)\equiv 2 \pmod 4$ and
			\begin{enumerate}
			\item $\nu_2(s)-2\nu_2(b)<0$, or
			\item $\nu_2(s)-2\nu_2(b)=2$ and $b_2\equiv 3 \pmod 4$, or
			\item $\nu_2(s)-2\nu_2(b)\equiv 0 \pmod 4, >0$, or
			\item $\nu_2(s)-2\nu_2(b)=6$, or
			\item $\nu_2(s)-2\nu_2(b)\equiv 2 \pmod 4, >6$ and $b_2\equiv 1 \pmod 4$
			\end{enumerate}
		\item $\nu_2(b)+1=\nu_2(a)$,  $2\nu_2(b)\neq \nu_2(s)$ and 
			\begin{enumerate}
			\item $\nu_2(s)-2\nu_2(b)\leq -4$, or
			\item $\nu_2(s)-2\nu_2(b)=-2$ and $\nu_2(s)\equiv 2 \pmod4$
			\end{enumerate}
		\item $(\nu_2(s)+6)/2\leq \nu_2(a) \leq \nu_2(b)$
		\item $(\nu_2(s)+4)/2 = \nu_2(a) \leq \nu_2(b)$ and $\nu_2(s)\equiv 2 \pmod 4$.
		\end{enumerate}
	\end{enumerate}
\end{theorem}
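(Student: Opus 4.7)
The strategy is to decompose the global root number as
$$W(\mathcal{F}_s(au+b)) = -\prod_{p} W_p(\mathcal{F}_s(au+b))$$
and analyze each local factor as $u$ ranges over $\Z$. A direct computation of the standard invariants of the model gives $c_4(t) = 144(t^2-s)$, $c_6(t) = -1728\, t(t^2-s)$, and with $s = -3r^2$ the minimal discriminant satisfies $\Delta(t) = 2^6 \cdot 3^4 \cdot r^2 \,(t^2 + 3r^2)^2$. This explicit factorization controls the reduction type at every prime: the possibly bad primes divide either $6s$ or the polynomial $(au+b)^2 + 3r^2$, and the whole argument reduces to understanding how each $W_p$ depends on $u$.

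I would partition the primes into four groups: (i) $p \geq 5$ with $p \nmid 6s$, giving good or multiplicative reduction; (ii) $p \geq 5$ with $p \mid s$; (iii) $p=3$; (iv) $p=2$. For group (i), the local root number at multiplicative primes is $W_p = -\left(\tfrac{-c_6(au+b)}{p}\right)$, and the product over such primes must be shown constant in $u$. This is handled globally via the approach of \cite{Desjardins1, Desjardins5}: using quadratic reciprocity and the fact that $-c_6(au+b) = 2^6\cdot 3^3\cdot(au+b)\bigl((au+b)^2+3r^2\bigr)$ has a controlled Jacobi-symbol decomposition, the variation in $u$ of the multiplicative-prime product cancels when the conditions of the theorem hold.

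For groups (ii) and (iii), I would apply Rohrlich's formula (for $p \geq 5$) and the Halberstadt-Rizzo tables (for $p=3$), writing $W_p(\mathcal{F}_s(au+b))$ as a function of $\nu_p(au+b)$ and its leading residue. The regime $\nu_p(b) < \nu_p(a)$ forces $\nu_p(au+b) = \nu_p(b)$ for all integers $u$, which freezes the Kodaira type and the relevant residue class; the regime $\nu_p(s)/2 \leq \nu_p(a) \leq \nu_p(b)$ instead forces $\nu_p((au+b)^2) \geq \nu_p(s)$, so the $s$-term dominates the minimal model and again the type is frozen. Any other valuation profile allows the type, hence $W_p$, to vary with $u$, destroying constancy. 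The $p=3$ analysis is analogous but shifted by the explicit factor of $3$ carried by the coefficients of $\mathcal{F}_s$, producing the $(\nu_3(s)-3)/2$ threshold.

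The main obstacle is $p = 2$, which accounts for the length of condition (3). Here $\mathcal{F}_s$ always has additive reduction, and the Halberstadt-Rizzo table at $2$ has a large number of sub-cases depending not only on the triple $(\nu_2(c_4), \nu_2(c_6), \nu_2(\Delta))$ but on residues of auxiliary quantities modulo $4$ and $8$. The plan is to run systematically through this table: for each admissible profile $(\nu_2(a), \nu_2(b), \nu_2(s))$, determine the Kodaira type at $2$ and write $W_2$ as an explicit function of $u$, then impose constancy. The residue conditions like $b_2 \equiv 1 \pmod 4$ or $\nu_2(s) \equiv 2 \pmod 4$ arise from tracking when the leading $2$-adic digits of $c_6/2^{\nu_2(c_6)}$ and $\Delta/2^{\nu_2(\Delta)}$ remain fixed as $u$ varies; the many branches in (3)(c)-(g) reflect the distinct Kodaira types (among $I_n^*$, $III^*$, $II^*$, and their variants) that can arise for different parities of $\nu_2(s)$. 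This step is essentially a finite but delicate bookkeeping exercise, and is where most of the work lies.
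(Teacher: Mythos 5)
Your overall skeleton --- a local decomposition of the root number followed by a prime-by-prime case analysis against the Rohrlich/Rizzo tables, with the $p=2$ bookkeeping carrying condition (3) --- is the same as the paper's, and your dichotomy between the regimes $\nu_p(b)<\nu_p(a)$ and $\nu_p(s)/2\leq\nu_p(a)\leq\nu_p(b)$ is exactly right. However, there is a genuine gap in your group (i), the primes $p\geq 5$ with $p\nmid 6s$. Since $c_4=2^43^2(t^2-s)$ and $\Delta=-2^63^3s(t^2-s)^2$, every prime dividing $(au+b)^2+3r^2$ but not $6s$ also divides $c_4$, so the reduction there is \emph{additive} of type II (potentially good), never multiplicative; the family has no place of multiplicative reduction at all ($M_{\mathcal E}=1$), and on individual fibres multiplicative reduction occurs only at primes dividing $s$. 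The formula $W_p=-\left(\frac{-c_6}{p}\right)$ you invoke therefore does not apply; Rohrlich's additive formula yields factors $\left(\frac{-1}{p}\right)$ or $\left(\frac{-3}{p}\right)$ according to $\nu_p(t^2-s)\bmod 6$. Two inputs are then missing from your plan. First, $\left(\frac{-3}{p}\right)=1$ automatically whenever $p\mid t^2+3r^2$ and $p\nmid 3r$, because $-3\equiv (tr^{-1})^2\pmod p$; this is precisely where the hypothesis $s=-3r^2$ enters and why the theorem's conditions involve only primes dividing $s$. Second, the leftover $\left(\frac{-1}{p}\right)$ factors, taken over the $u$-dependent set of primes dividing $(au+b)^2+3r^2$, are not individually constant; they must be reassembled by reciprocity into $\mathrm{sgn}(t^2-s)\left(\frac{-1}{(t^2-s)_{2}}\right)$ and folded into the analysis at $2$. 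This renormalization (the paper's functions $w_p^*$) is what legitimizes the step ``the root number is constant iff each local factor is constant''; with the raw $W_p$ the factors are correlated and that equivalence fails. Your statement that the variation of this product ``cancels when the conditions of the theorem hold'' has the logic reversed: after renormalization it is identically trivial for every $a,b$, unconditionally.

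A second, smaller gap is the ``only if'' direction. You assert that any other valuation profile ``allows the type, hence $W_p$, to vary,'' but constancy of the Kodaira type is neither necessary nor sufficient for constancy of $W_p$ (type II contributes $\left(\frac{-1}{p}\right)$, which coincides with the good-reduction value $+1$ exactly when $p\equiv 1\pmod 4$). One must actually exhibit, in each excluded case, two members $t,t'$ of the progression $a\Z+b$ with different local contributions; the residue conditions such as $b_2\equiv 1\pmod 4$ or $\nu_2(s)\equiv 2\pmod 4$ in condition (3) are forced by exactly these constructions, so this is a substantial part of the proof rather than a corollary of the table lookup.
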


\begin{exam}\label{ex: basechange}
Observe that $\mathcal{L}_{1,s,v}$ is a subfamily of $\mathcal{F}_s$. Thus it is possible to generate examples of families of the form $\mathcal{L}_{w,s,v}$ with constant root number over $\Z$ by starting from a family $\mathcal{F}_s(au+b)$ with constant root number over $\Z$. For instance the family $$\mathcal{V}_{v}(t):y^2=x^3+(t^2+v)x^2-(t^2+v+3)x+1$$ arises from the Washington family $\mathcal{W}(t)$, which is isomorphic to $\mathcal{F}_{-3^54}(12t+18)$ via a quadratic base change. Therefore every integer fibre of the family $\mathcal{V}_{v}(t)$ also has root number $-1$. Further note that $\mathcal{V}_v(t)$ is isomorphic to $\mathcal{L}_{12,-3^54,v+\frac{3}{2}}(t)$ and in particular that their integer fibres correspond.
\end{exam}

Extending likewise the construction of $\mathcal{L}$-families as quadratic subfamilies of $\mathcal{F}$-families we deduce from Theorem \ref{mainthm} the following result:

\begin{cor}\label{cor: main_to_L} Let $w,r,v\in \Q$ be non-zero and such that $w$, $wv$ and $-3r^2w^2$ are integers. Suppose moreover that those integers respect the three following conditions:
\begin{enumerate}
    \item $\nu_p(v)<0$ or $\nu_p(r)\leq0\leq\nu_p(v)$, for every $p\mid rw$,
    \item $\nu_3(v)<0$ or $\nu_3(r)-1\leq 0\leq \nu_3(v)$,
    \item one of the following holds:
    \begin{enumerate}
        \item $\nu_2(v)\leq-2$,
        \item $\nu_2(v)=-1$ and 
        \begin{enumerate}
            \item $\nu_2(r)\leq-2$, or
            \item $\nu_2(r)=-1$ and $\nu_2(w)$ is even
        \end{enumerate}
        \item $\nu_2(r)+3\leq0\leq\nu_2(v)$,
        \item $\nu_2(r)+2=0\leq \nu_2(v)$.
    \end{enumerate}
\end{enumerate}
Then for any $a,b\in\Z$, the family $\mathcal{L}_{w,-3r^2,v}(au+b)$ has constant root number as $u$ varies over $\Z$.
\end{cor}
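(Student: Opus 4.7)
The plan is to deduce the corollary from Theorem~\ref{mainthm} using the quadratic base change relating $\mathcal{L}$- and $\mathcal{F}$-families alluded to in Example~\ref{ex: basechange}. The change of variables $(x,y)\mapsto(wx,w^2 y)$ yields a fibrewise $\Q$-isomorphism $\mathcal{L}_{w,s,v}(t)\xrightarrow{\sim}\mathcal{F}_{sw^2}(w(t^2+v))$, which matches root numbers fibre by fibre. Specialising to $s=-3r^2$ and noting that $rw$ is forced to be a nonzero integer (since $3(rw)^2\in\Z$ and $rw\in\Q$ imply $rw\in\Z$), one obtains $\mathcal{L}_{w,-3r^2,v}(t)\cong\mathcal{F}_{-3(rw)^2}(w(t^2+v))$.

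Next comes the linearization trick. The identity
$$ w(au+b)^2+wv=(wa)(au^2+2bu)+w(b^2+v) $$
identifies the integer fibre of $\mathcal{L}_{w,-3r^2,v}(au+b)$ at $u$ with the fibre at $u'=au^2+2bu\in\Z$ of the \emph{linear} family $\mathcal{F}_{s^*}(a^*u'+b^*)$, where $s^*:=-3(rw)^2$, $a^*:=wa$ and $b^*:=w(b^2+v)$. The hypotheses $w,wv\in\Z$ make $a^*,b^*$ integers. Hence if Theorem~\ref{mainthm} applies to the triple $(s^*,a^*,b^*)$, then $\mathcal{F}_{s^*}(a^*u'+b^*)$ has constant root number over every $u'\in\Z$, in particular over the subset $\{au^2+2bu:u\in\Z\}$, which proves the corollary.

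The content is thus to check that the corollary's hypotheses (1)--(3) imply, for \emph{every} $a,b\in\Z$, the theorem's hypotheses (1)--(3) for $(s^*,a^*,b^*)$. Writing $\nu_p(a^*)=\nu_p(w)+\nu_p(a)$, $\nu_p(b^*)=\nu_p(w)+\nu_p(b^2+v)$, and $\nu_p(s^*)=\nu_p(-3)+2\nu_p(rw)$, the $\nu_p(w)$ cancels uniformly, reducing the theorem's inequalities to ones in $\nu_p(r),\nu_p(v),\nu_p(a),\nu_p(b^2+v)$. At $p\geq 5$ and $p=3$ the verification is short: if $\nu_p(v)<0$ then $\nu_p(b^2+v)=\nu_p(v)<0\leq\nu_p(a)$ places us in the first clause of the theorem; if instead $\nu_p(r)\leq 0\leq\nu_p(v)$ (respectively $\nu_3(r)-1\leq 0\leq\nu_3(v)$ at $p=3$), then both $\nu_p(a)\leq\nu_p(b^2+v)$ and its opposite fall into one of the clauses.

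The main obstacle is the dyadic case. One must show that each of the four options (a)--(d) of corollary~(3) implies, for \emph{all} $a,b\in\Z$, that some one of the seven options (a)--(g) of theorem~(3) holds for $(\nu_2(s^*),\nu_2(a^*),\nu_2(b^*))$, together with the odd-part condition on $b^*\bmod 4$ where required. This uses the identity $\nu_2(b^2+v)=\min(2\nu_2(b),\nu_2(v))$ when these differ (with a refinement in the tied case) and a computation of the odd part of $w(b^2+v)$ modulo $4$. The check is organised by fixing a case of the corollary and splitting on the range of $\nu_2(a)$ relative to $\nu_2(b),\nu_2(v),\nu_2(r),\nu_2(w)$; the delicate point is that every $(a,b)$ must land in some theorem case, which relies on the fact that the odd part of $w(b^2+v)$ modulo $4$ is sometimes forced independently of $b$ (as occurs, for instance, in the reduction of the Washington family to $\mathcal{F}_{-3^5\cdot4}(12u+18)$). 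The extensive dyadic bookkeeping is the principal technical burden of the proof.
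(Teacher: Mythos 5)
Your reduction to Theorem \ref{mainthm} via the isomorphism $\mathcal{L}_{w,s,v}(t)\cong\mathcal{F}_{sw^2}(w(t^2+v))$ is the same first step as the paper's, and your algebra is correct. The problem is the linearization you choose afterwards. Writing $w(au+b)^2+wv=(wa)u'+w(b^2+v)$ forces you to verify the theorem's hypotheses for a \emph{two-parameter family} of triples $(s^*,wa,w(b^2+v))$, one for each $(a,b)\in\Z^2$, with $\nu_2(b^2+v)$ depending delicately on $b$ through $\min(2\nu_2(b),\nu_2(v))$ and its tied refinements. You acknowledge that this dyadic case analysis is ``the principal technical burden'' and then do not carry it out. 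But under your formulation that analysis \emph{is} the proof: without it nothing has been established, since Theorem \ref{mainthm} is an if-and-only-if for constancy over all of $\Z$, and you have not shown that its conditions hold for the triples you need. So as written the argument has a genuine gap, and it is not a routine one to fill along the route you chose.

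The paper sidesteps all of this with a coarser linearization: set $U=(au+b)^2$, so that $w(au+b)^2+wv=wU+wv$ with $U$ ranging over a \emph{subset} of $\Z$, and apply Theorem \ref{mainthm} to the single triple $(S,A,B)=(-3r^2w^2,\,w,\,wv)$, which does not involve $a$ or $b$ at all. Constancy of $W(\mathcal{F}_{S}(wU+wv))$ over all $U\in\Z$ then gives constancy on the subset of squares for free, and translating the theorem's conditions for $(S,A,B)$ into the corollary's conditions is a three-line valuation computation: $\nu_p(B)<\nu_p(A)$ becomes $\nu_p(v)<0$, and $\nu_p(S)/2\le\nu_p(A)\le\nu_p(B)$ becomes $\nu_p(r)\le 0\le\nu_p(v)$, with the analogous cancellations of $\nu_p(w)$ at $p=2,3$. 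Note also that your progression $wa\Z+w(b^2+v)$ is a sub-progression of $w\Z+wv$, so the implication you would need (corollary hypotheses $\Rightarrow$ theorem hypotheses for every $(s^*,wa,w(b^2+v))$) is in fact a \emph{consequence} of the paper's simpler argument together with the ``only if'' direction of Theorem \ref{mainthm} applied to the sub-progression -- which shows that your extra bookkeeping, even if completed, would buy nothing. I recommend replacing $u'=au^2+2bu$ by $U=(au+b)^2$ and collapsing the whole verification accordingly.
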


\begin{rem} Theorem \ref{mainthm} does not describe every case of families of the form $\mathcal{L}_{w,s,v}(au+b)$ with constant root number over $\Z$. There are many more such families, in particular coming as quadratic subfamilies of $\mathcal{F}$-families with non-constant root number, as illustrated by the following example.
\end{rem}

\begin{exam}\label{ex: notcomingfromconstant}
The family $\mathcal{L}_{7,-3\cdot2^27^2,1}(4u+2)$ has constant root number for $u\in\Z$ by Lemma \ref{mainthmL} and is a quadratic subfamily (when $t=(4u+2)^2$) of $\mathcal{F}_{-3\cdot2^27^4}(7t+7)$, for which the root number is non-constant on integer fibres.
\end{exam}
\begin{rem}
While writing this paper, we found an impressive number of technical conditions for the general case of $\mathcal{L}$-families, the length of which convinced us to leave a more detailed inspection of this latter case to an upcoming second paper. 
\end{rem}

\subsection{Consequences}

Theorem \ref{mainthm}, coupled with two analytic number theory conjectures on the primitive factors of the discriminant of the families, leads to a description of families of elliptic curves of coefficients of bounded degree that have constant root number over $\Z$. This description is stated in the following:

\begin{theorem}\label{cor: main}
Let $\E(t)$ be a family of elliptic curves for $t\in\Q$ given by the Weierstrass equation 
\begin{equation}\label{oursurfaces}\E(t):y^2=x^3+a_2(t)x^2+a_4(t)x+a_6(t)\end{equation}
where $\deg a_i\leq 2$ for $i=2,4,6$. Define
    \begin{align}\label{def:MandB}
        M_\E:=\prod_{P\mid \Delta, P\nmid c_4}{P}, \qquad B_\E:=\prod_{P\mid \Delta}{P}
    \end{align}
    to be the product of places of multiplicative reduction and the product of places of bad reduction, respectively.
Suppose Chowla's conjecture and the Squarefree conjecture holds for $M_\E$ and $B_\E$, respectively. Then
    \begin{align*}
        W_\pm(\E(t),\mathbb{Z}):=\#\{t\in\Z \ \vert \ W(\E(t))=\pm1\}
    \end{align*}
    are both infinite except possibly if $\E(t)$ has the form $\mathcal{F}_s(au+b)$ or $\mathcal{L}_{w,s,v}(au+b)$ with $s=-3r^2$ for some non-zero $r\in \Z$. Moreover, if $\E(t)$ has the form of a $\mathcal{F}$-family then $W_\pm(\E(t),\mathbb{Z})$ are both infinite except if and only if it respects the conditions of Theorem \ref{mainthm}.
\end{theorem}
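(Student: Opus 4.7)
The plan is to combine the classification of bounded-degree families of Bettin--David--Delauney with the variation results of Desjardins, and then invoke Theorem \ref{mainthm} directly for the $\mathcal{F}$-family case. First I would recall from \cite{BDD} that, up to isomorphism and affine reparametrization $t \mapsto au+b$, the non-isotrivial families $\E(t)$ with $\deg a_i \leq 2$ fall into six standard forms. Of these, \cite{Desjardins5} already shows that four have varying root number over the integer fibres under the assumption of Chowla's conjecture for $M_\E$ and the Squarefree conjecture for $B_\E$; the mechanism there produces infinitely many primes $p$ of multiplicative reduction via Chowla, at which the local root number flips as $p$ moves in arithmetic progressions, while the Squarefree conjecture pins down the additive-reduction contribution to a controlled set of residue classes of $t$. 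The two remaining families are precisely $\mathcal{F}_s$ and $\mathcal{L}_{w,s,v}$.

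Second I would reduce to $s = -3r^2$. For both $\mathcal{F}_s$ and $\mathcal{L}_{w,s,v}$ the local root number at primes of multiplicative reduction is controlled, up to harmless factors, by a quadratic character evaluated at $-3s$; if $-3s$ is not a square, Chowla applied to $M_\E$ produces infinitely many multiplicative primes in each non-trivial residue class of this character, forcing both signs of the global root number to occur infinitely often among the integer fibres. Hence the only candidates for constant root number over $\Z$ among families of the shape \eqref{oursurfaces} are $\mathcal{F}_s(au+b)$ and $\mathcal{L}_{w,s,v}(au+b)$ with $s=-3r^2$, which gives the first half of the statement.

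For the moreover clause, Theorem \ref{mainthm} provides necessary and sufficient conditions on $s,a,b$ for $\mathcal{F}_s(au+b)$ to have constant root number on integer fibres. If those conditions hold, $W(\E(t))$ takes a single value on every $t \in \Z$, so exactly one of $W_\pm(\E(t),\Z)$ is infinite. If they fail, the case analysis underlying Theorem \ref{mainthm} exhibits at least two integer parameters $u_1,u_2$ producing opposite root numbers; by local constancy of the root number on sufficiently fine residue classes modulo a finite set of primes, both residue classes carry infinitely many integers, and so both $W_\pm(\E(t),\Z)$ are infinite.

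The main obstacle is the reduction step: one must verify that every Weierstrass equation of the ambient shape \eqref{oursurfaces} is genuinely $\Q$-isomorphic, after a linear reparametrization $t \mapsto au+b$ and possibly a quadratic twist accounting for $\mathcal{L}$'s leading coefficient $w$, to a member of the six BDD families. This matches families whose depressed form has coefficients of larger degree to the BDD list through cancellations forced by the low degree of the $a_i$. Once this dictionary between ambient equations and BDD normal forms is established, the rest of the argument is a bookkeeping exercise combining \cite{BDD}, \cite{Desjardins5}, and Theorem \ref{mainthm}.
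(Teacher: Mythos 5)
Your overall skeleton (BDD classification, citation of \cite{Desjardins5} for four of the six classes, then Theorem \ref{mainthm} for the $\mathcal{F}$-case) matches the paper, and your treatment of the ``moreover'' clause via local constancy of the $w_p^*$ on congruence classes is correct. But the mechanism you propose for the crucial reduction to $s=-3r^2$ is wrong, and it would fail as written. You argue that ``Chowla applied to $M_\E$ produces infinitely many multiplicative primes'' at which a quadratic character in $-3s$ flips the sign. For $\mathcal{F}_s$ (and $\mathcal{L}_{w,s,v}$) there are \emph{no} places of multiplicative reduction: the only non-constant factor of $\Delta(t)=-2^63^3s(t^2-s)^2$ is $t^2-s$, which divides $c_4(t)=2^43^2(t^2-s)$, so $M_\E=1$ and Chowla gives nothing. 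The same misattribution occurs in your description of the four classes $\mathcal{G},\mathcal{H},\mathcal{I},\mathcal{J}$: these are precisely the \emph{potentially parity-biased} families, with no multiplicative reduction at all. The paper's actual engine is the notion of an \emph{insipid} place together with \cite[Theorem 1.1]{Desjardins5}: a finite place of type II is insipid exactly when its polynomial is of the form $A^2+3B^2$ (and type III when it is $A^2+B^2$), and the existence of a single non-insipid place, combined with the Squarefree conjecture for $B_\E$, already forces both signs to occur infinitely often. The place $t^2-s$ of type II is of the form $A^2+3B^2$ if and only if $s=-3r^2$; that is the whole content of the reduction, and it runs through additive reduction and the Squarefree conjecture, not through Chowla and multiplicative primes.

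A second, related gap: the BDD classification into six normal forms (\cite[Theorems 7 and 8]{BDD}) applies only to families with no place of multiplicative reduction, not to every Weierstrass equation with $\deg a_i\leq 2$ as your first step asserts. Families possessing a multiplicative place must be dispatched separately --- this is the one point where Chowla's conjecture for $M_\E$ genuinely enters, since a multiplicative place is automatically non-insipid and \cite[Theorem 1.1]{Desjardins5} then applies. So the ``main obstacle'' you identify (matching arbitrary equations to the six forms) is not a matching problem at all; it dissolves once you split into the cases ``some non-insipid place exists'' (handled by Desjardins, using Chowla and/or Squarefree as appropriate) and ``all places insipid'' (which forces membership in the BDD list and then forces $\mathcal{F}_{-3r^2}$ or $\mathcal{L}$, since linear places of type II or III such as $t$ and $t-1$ can never be written as $A^2+B^2$ or $A^2+3B^2$ with $A,B\in\Q[t]$ non-zero).
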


This is a direct consequence of \cite[Theorem 1.1]{Desjardins5}. This result is conditional on two analytic number theory conjectures that are true for low-degree polynomials. The first is Chowla's conjecture which predicts the behavior of $\lambda(n)$, the parity of the number of prime factors of $n$, when $n$ varies through the values of a polynomial: $$\sum_{\mid t\mid\leq X}{\lambda(f(t))}=o(X).$$
The conjecture is known to be true for $\deg f\leq1$, which makes Theorem \ref{cor: main} unconditional relatively to this conjecture for families with at most one (linear) factor of the discriminant $\Delta$ not dividing the $c_4$-invariant.

The second conjecture is the Squarefree conjecture which estimates the proportion of squarefree values of a polynomial. For simplicity, suppose that $\gcd\{f(x)\vert x \in\Z\}$ is squarefree. According to this conjecture we have:
$$Sqf(X)=\frac{1}{N}{\prod_{p\leq N}\left(1-\frac{t_f(p)}{p^{2}}\right)}X+o(X),$$
where $t_f(p)$ denotes the number of solutions modulo $p^2$ of $f(x)\equiv0\pmod{p^2}$.
This conjecture is known to hold when every factor of $f$ has degree at most 3. Thus the only case where Theorem \ref{cor: main} is conditional relative to this conjecture is the case where the $c_4$-invariant of a family is irreducible over $\Q(t)$.\\ 

Another consequence of Theorem \ref{mainthm} is the ability to predict a \textit{rank elevation} of the integer fibres compared to the \textit{generic rank} $r(\E)$ of a family with constant root number equal to $(-1)^{r(\E)}$. Here we use Silverman's Specialization Theorem (Theorem \ref{thm: silv}) and the Parity Conjecture (Conjecture \ref{parityconjecture}). The particular case that draws our attention in this paper is that on families $\mathcal{F}_{s}$ where $s=-12k^4$ for some $k$. It was proven by \cite[Proposition 5]{BDD} that such families have generic rank $1$ (and $0$ if $s$ is of another form). Using \eqref{eq: 9} and Propositions \ref{prop: s=-12k^4 p5}, \ref{prop: s=-12k^4 p3} and \ref{prop: s=-12k^4 p2}, we 
list every family of the form $\mathcal{F}_{-12k^4}(au+b)$ with constant root number equal to $1$.

\subsection{Families with coefficients of bounded degrees}\label{sectionexample}
We say a family of elliptic curves $\E(t)$ has coefficients of bounded degree if it is given for $t\in\Q$ by the Weierstrass equation 
\begin{equation}\label{oursurfaces}\E(t):y^2=x^3+a_2(t)x^2+a_4(t)x+a_6(t)\end{equation}
where $\deg a_i\leq 2$ for $i=2,4,6$. Suppose there is no factor of $\Delta$ that divides the $c_4$-invariant (or in other words there is no place of multiplicative reduction i.e. the family is \emph{potentially parity-biased}). Then Bettin, David and Delauney \cite[Theorem 7 and 8]{BDD} proved that there are essentially six different classes of such families that are non-isotrivial. We list these families here along with their primitive factors and corresponding Kodaira types:
	\begin{table}[H]
	\small
	\caption{Families with coefficients of bounded degree}
	\label{table: kodairaBDD}
	\noindent \makebox[\textwidth]{
	\begin{tabular}{l l l l}
	\hline
	$\mathcal{F}_s(t): y^2=x^3+3tx^2+3sx+st$	&$t^2-s$ (II)	\\
	$\mathcal{G}_w(t):wy^2=x^3+3tx^2+3tx+t^2$	&$t-1$(II), $t$(III)\\
	$\mathcal{H}_w(t):wy^2=x^3+(8t^2-7t+3)x^2+3(2t-1)x+(t+1)$	&$t$ (III), $t^2-\frac{11}{8}t+1$ (II)\\
	$\mathcal{I}_w(t): wy^2=x^3+t(t-7)x^2-6t(t-6)x+2t(5t-27)$	&$t$ (II), $t^2-10t+27$ (II)\\
	$\mathcal{J}_{m,w}(t): wy^2=x^3+3t^2x^2-3mtx+m^2$	&$t^3+m$ (II)\\
	$\mathcal{L}_{w,s,v}(t): wy^2=x^3+3(t^2+v)x^2+3sx+s(t^2+v)$	&$t^4+2vt^2+v^2-s$ (II)\\
	\hline
	\end{tabular}
	}
	\end{table}

In \cite{Desjardins5}, the second author proved (in particular) that any family isomorphic to one of the form $\mathcal{G}_w$, $\mathcal{H}_w$, $\mathcal{I}_w$ and $\mathcal{J}_{m,w}$ has infinitely many integer fibres with negative (resp. positive) root number. Her result is unconditional in those cases, since there is no multiplicative reduction (hence no need of Chowla's Conjecture) and the primitive factors of $\Delta$ have degree 3 or less (hence the Squarefree Conjecture is verified). 
Hence the only families among the six classes on which \cite{Desjardins5} leaves an uncertainty are those of the form $\mathcal{F}_s$ and $\mathcal{L}_{w,s,v}$ in very special circumstances. As mentioned earlier we note that $\mathcal{L}$-families are quadratic subfamilies of $\mathcal{F}$-families.

Chinis \cite{Chi} computed the average root number on the families $\mathcal{F}_s$ and showed that $\mathcal{F}_s(t)$ is \emph{parity biased} over $\Z$ (i.e. the average root number over $\Z$ is not 0) if and only if $s\not\equiv1,3,5\pmod 8$.
The \emph{average root number} of a family of elliptic curves $\E$ over $\Z$ is defined as $$Av_\Z(W(\E)):=\lim_{T\rightarrow\infty}{\frac{1}{2T}\sum_{\vert t \vert\leq T}{W(\E_t)}},$$ provided that the limit exists.

During our initial investigation, we observed that the families $\mathcal{F}_s(au+b)$ with constant root number when $u$ varies through $\Z$ are exactly those with $Av_\mathbb{Z}\left(W(\mathcal{F}_s(au+b)\right)=\pm1$. This remark is not proven in the present paper, but is included in the first author's URSA report \cite{reportRena}. 

\subsection{Outline of the paper}

In Section 2 we recall on the root number of an elliptic curve, with special attention to a decomposition of the root number into local functions $w_p^*$ when studied in families. Moreover we prove Theorem \ref{cor: main}.

Section 3 contains the proof of our main results. We begin with a simpler result in Section \ref{sec: 3.1} on the root number of integer fibres of the family $\mathcal{F}_s(t)$ as $t$ runs through $\Z$. This motivates us to study subfamilies $\mathcal{F}_s(au+b)$ as $u$ runs through $\mathbb{Z}$ in Section \ref{sec: 3.2}, where we give necessary and sufficient conditions on $s, a, b$ for the root number to be constant. This is our main result which is stated in Theorem \ref{mainthm}. In Section \ref{sec: 3.3} we prove similar results concerning the families $\mathcal{L}_{w,s,v}(au+b)$.

Finally in Section \ref{sec: 4}, we look for rank elevation in the integer fibres of $\mathcal{F}$-families: i.e. if we denote by $r(\mathcal{F}_s)$ the generic rank of the family, when does $r(\mathcal{F}_s(au+b))>r_{\mathcal{F}_s}$ for every $u\in\Z$ (except possibly a finite number of $u$)? We consider families $\mathcal{F}_s(au+b)$ with $s,a,b\in\Z$ and such that $s=-12k^4$, for some $k\in\Z$. In this particular case, the generic rank is 0, as proved in \cite{BDD}, and 1 if $s$ has another form. Based on results from the previous section, we describe which families have constant root number $1$ over integer fibres and give some examples. It is predicted by the parity conjecture (weak BSD) that the root number predicts the parity of the rank: $W(E)=(-1)^{r(E)}$. Thus there should be a rank elevation when the generic rank is even (resp. odd) and the root number is $1$ (resp. $-1$). 

The Appendix contains our formulae for the function denoted $w_p^*$, equal up to some factor to the local root number at a prime $p$. This function is defined in \eqref{def:local_functions} of Section \ref{sec:define_mod_root_number} and appears continuously throughout this paper.

\subsection{Acknowledgments}

This paper was initiated as part of a summer NSERC's USRA research project of the first author supervized by the second author and Florian Herzig. Both authors are grateful to him for co-supervizing, for his financial support and for his helpful remarks. The exposition of the paper was improved by suggestions of Marc Hindry, Matthew Bisatt and Paul Voutier. The authors thank the University of Toronto for making the project possible. 
\section{Root number}

\subsection{Definitions}\label{sec:def_root}

Let $E$ be an elliptic curve over $\Q$. The \emph{root number} $W(E)$ of $E$ can be defined in two equivalent ways (since the base field is the rationals). The first is as the sign of the functional equation of the $L$-function attached to $E$; it is equal to the parity of the analytic rank $r_{an}$ (the order of vanishing of $L(E,s)$ at $s=1$):
$$W(E)=(-1)^{r_{an}(E)}.$$
This is made possible by the Modularity Theorem by Wiles \cite{Wil95} and Breuil-Conrad-Diamond-Taylor \cite{BCDT01} which guarantees the existence of the functional equation. 
One of the consequences of the famous Birch and Swinnerton-Dyer conjecture is that the analytic and the geometric rank of an elliptic curve have the same parity. Therefore the following conjecture can be seen as a weak version:

\begin{conjecture}[Parity conjecture]\label{parityconjecture}$W(E)=(-1)^{r(E)}.$
\end{conjecture}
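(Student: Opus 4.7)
The statement is the classical Parity Conjecture for elliptic curves over $\Q$, which is famously open; a full unconditional proof would amount to one of the main outstanding pieces of the Birch--Swinnerton-Dyer conjecture and is well beyond current techniques. The plan, following the strategy developed by Nekov\'a\v{r} and by the Dokchitser brothers, is to split the task into two steps: first establish the $p$-parity statement $(-1)^{r_p(E)} = W(E)$, where $r_p(E)$ denotes the $\Z_p$-corank of the $p^\infty$-Selmer group of $E$; and then try to deduce the parity of the Mordell--Weil rank $r(E)$ from it.

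For the first step the key input is a prime-by-prime comparison between the local root numbers $w_v(E)$ and local contributions to Selmer parity. Using Brauer relations among subfields of a suitable Galois extension, together with the theory of regulator constants, one can promote such local compatibilities into a global identity. This gives the $p$-parity conjecture unconditionally for all $E/\Q$ and odd primes $p$ (Nekov\'a\v{r}), and for $p=2$ in a broad range of cases. The local computations involved are exactly of the same flavour as the local functions $w_p^*$ that appear later in this paper, which is no accident.

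The hard step, and the one that actually remains conjectural, is the second. From the Kummer sequence one has
\[ 0 \to E(\Q) \otimes \Q_p/\Z_p \to \mathrm{Sel}_{p^\infty}(E) \to \mathrm{Sha}(E)[p^\infty] \to 0, \]
so $r_p(E) - r(E)$ equals the $\Z_p$-corank of the $p$-primary part of $\mathrm{Sha}(E)$. The Cassels--Tate pairing forces this corank to be even whenever $\mathrm{Sha}(E)$ is finite, which would close the argument; but finiteness of $\mathrm{Sha}(E)$ is itself unknown in general. The main obstacle, therefore, is not the $p$-parity statement at all, but rather showing that the $p$-primary part of $\mathrm{Sha}$ contributes an even corank to $r_p(E)$; this is something one simply cannot prove unconditionally at present. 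For this reason the statement is recorded here only as a conjecture, and the present paper, like most of the literature on root numbers in families, uses it purely to translate sign data into predictions about ranks.
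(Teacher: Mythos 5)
You have correctly recognized that this statement is the Parity Conjecture, which the paper records only as a conjecture and never proves; the paper uses it solely to translate root number values into rank predictions (e.g.\ in Section~\ref{sec: 4}), exactly as you describe. Your summary of the current state of the art --- $p$-parity via Nekov\'a\v{r} and the Dokchitsers, with the genuine obstruction being the unknown finiteness of $\mathrm{Sha}(E)$ --- is accurate, and rightly stops short of claiming a proof.
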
 
As a consequence of this conjecture, $W(E)=-1$ implies that $E$ has a non-zero rank. This first definition of the root number is difficult to compute, so instead we make use of the following equivalent definition in which the root number is a product of local factors
    \begin{align}\label{eqn:local_factors}
        W(E)=\prod_{p\leq \infty}{W_p(E)}
    \end{align}
where each factor $W_p(E)$ is called the \emph{local root number at $p$}, defined in terms of the epsilon factors of the Weil-Deligne representations of $\Q_p$. This decomposition makes computation easier by the formulas and tables found in papers of Rohrlich \cite{Rohrlich} for $p\geq5$ and Rizzo \cite{Riz} for $p=2,3$.

\subsection{Insipid places and proving Theorem \ref{cor: main}}

Recall that to each finite place of $\Q(t)$ is attached an irreducible polynomial $P$. We say a family has an \emph{insipid place at $P$} if the place is finite of type: I$_0^*$; II, II$^*$, IV or IV$^*$ with $P=A^2+3B^2$; or III, III$^*$ with $P=A^2+B^2$, for some non-zero $A,B\in\Q[t]$.
We will see how Theorem \ref{mainthm} and the following result of the second author implies Theorem \ref{cor: main}.

\begin{theorem}\cite[Theorem 1.1]{Desjardins5}
Let $\E(t)$ be a family of elliptic curves over $\Q$. Suppose there exists at least one irreducible primitive polynomial $P_0(T)\in\Z[T]$ associated to either a place that is not insipid for $\E$ or to a place of type  $I_0^*$ and $\deg P_0$ is odd.
Suppose moreover that $M_\E$ respects Chowla's conjecture or $M_\E=1$, and $B_\E$ respects the Squarefree conjecture, where $M_\E$ and $B_\E$ are as defined in \eqref{def:MandB}.
Then the sets $W_+(\E,\Z)$ and $W_-(\E,\Z)$ are both infinite.
\end{theorem}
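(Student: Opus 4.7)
The plan is to exploit the factorization $W(\E(t))=\prod_{p\le\infty}W_p(\E(t))$ from \eqref{eqn:local_factors} and to isolate which local factors depend non-trivially on $t$. Using Tate's algorithm together with the explicit tables of Rohrlich for $p\ge5$ and Rizzo for $p=2,3$, each $W_p(\E(t))$ can be written as a function of finitely many $p$-adic data of $t$, namely the valuations and residues of the primitive factors $P_i(T)\in\Z[T]$ of $\Delta_\E$ (for additive reduction) and the sign given by $\lambda\bigl(M_\E(t)\bigr)$ (for multiplicative reduction, since there the local root number flips with every sign change of the Kodaira symbol $I_n$). First I would fix a congruence class $t\equiv t_0\pmod{N}$ on which all the \emph{constant} local contributions stabilize and the remaining factors take the form $\lambda(M_\E(t))\prod_i\chi_i(P_i(t))$, where the $\chi_i$ are Jacobi-symbol–type quadratic characters depending on the Kodaira type and on the squarefree part of $P_i(t)$.

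Next I would reduce the problem to showing that on such an arithmetic progression the product of the surviving characters is not eventually of constant sign. The definition of \emph{insipid} exactly captures the bad cases where the character $\chi_i$ attached to $P_i$ collapses to a constant: either the reduction is $I_0^*$ (where the local root number is $+1$ regardless, so long as $\deg P_i$ is even, because the quadratic twist character applied to an even-degree primitive factor yields a constant sign on any fixed residue class), or $P_i$ is an integer norm form of the shape $A^2+3B^2$ (resp.\ $A^2+B^2$) which forces $\chi_i$ to take only the value $+1$. Outside of these cases, either the Kodaira type yields a non-trivial quadratic character as $t$ varies, or $P_i$ of odd degree at a type-$I_0^*$ place gives rise to the Jacobi symbol $(P_0(t)/\cdot)$ whose values fluctuate along any arithmetic progression.

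The final step is equidistribution. If the surviving variation comes from multiplicative reduction, Chowla's conjecture (hypothesized on $M_\E$) gives $\sum_{|t|\le X}\lambda(M_\E(t))=o(X)$, so $\lambda(M_\E(t))$ takes both signs on a positive-density subset of $t\in\Z$. If instead the variation comes from additive reduction through a non-insipid place or through an odd-degree $I_0^*$ place, then the Squarefree conjecture on $B_\E$ controls the density of $t$ for which $P_0(t)$ has squarefree part in prescribed residue classes modulo small primes; combined with quadratic reciprocity and Dirichlet's theorem one deduces that the Jacobi-symbol character $\chi_0(P_0(t))$ is equidistributed on the progression, while the other character factors average out independently by the same mechanism.

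The main obstacle is the book-keeping: one must check case by case through the Rohrlich and Rizzo tables that \emph{every} Kodaira type not singled out as insipid (and every odd-degree $I_0^*$) contributes a genuinely oscillating character, and that the joint equidistribution of several such characters plus the Liouville factor does not accidentally cancel. Once that is established, choosing $t\equiv t_0\pmod N$ so that the stable factor equals any prescribed sign $\pm 1$ and appealing to the density statements above produces infinitely many integer fibres with $W(\E(t))=+1$ and infinitely many with $W(\E(t))=-1$, establishing that both $W_+(\E,\Z)$ and $W_-(\E,\Z)$ are infinite.
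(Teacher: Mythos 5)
This statement is not proved in the paper at all: it is quoted verbatim from \cite[Theorem 1.1]{Desjardins5} and used as a black box, so there is no internal proof to compare your attempt against. Judged on its own terms, your proposal is a reasonable reconstruction of the strategy of the cited paper, and it is consistent with the machinery this paper does develop (the decomposition $W(\E(t))=-\prod_p w_p^*(\E(t))$ of Theorem \ref{thm: indep}, the role of Chowla's conjecture for $M_\E$ and of the Squarefree conjecture for $B_\E$). But as written it is a plan, not a proof, and the two steps you defer are precisely the mathematical content of the theorem.

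Concretely: (i) the assertion that the Squarefree conjecture on $B_\E$, ``combined with quadratic reciprocity and Dirichlet's theorem,'' yields equidistribution of the Jacobi-symbol character attached to a non-insipid place (or an odd-degree $I_0^*$ place) is exactly the hard analytic core of \cite{Desjardins5} (following Helfgott's method); one must actually produce integers $t$ for which the squarefree part of $P_0(t)$ contains a prime in a prescribed residue class, and show this can be done compatibly with fixing the congruence class of $t$ that freezes the other local factors --- none of this is carried out. (ii) The ``book-keeping'' that every Kodaira type outside the insipid list contributes a genuinely non-constant character, and that an even-degree $I_0^*$ place contributes a constant one, is not a routine check but is what justifies the definition of insipid; your stated reason for the $I_0^*$ case (parity of $\deg P_i$ controlling a sign on a residue class) is only a heuristic and conflates it with the norm-form cases $P=A^2+3B^2$ and $P=A^2+B^2$. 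You also do not address the independence of the local factors $w_p^*$ (needed so that freezing some and oscillating one actually changes the global sign), which the present paper handles via Theorem \ref{thm: indep}. So the approach is the right one, but the proof has genuine gaps at its two load-bearing points.
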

Let $\E(t)$ be a family of elliptic curves with coefficients $a_i(t)\leq2$ for all $i=2,4,6$.
Suppose that $\E(t)$ has only insipid places of bad reduction, except perhaps at $-\mathrm{deg}$. Then by \cite{BDD} $\E(t)$ is isomorphic either to $\mathcal{F}_s(t)$ or $\mathcal{L}_{w,s,v}(t)$. Indeed, as is given in Table \ref{table: kodairaBDD}, a family $\mathcal{G}_w(t)$ has a linear place of type II and another of type III: neither $t$ or $t-1$ can be written as $t=A^2+B^2$ for non-zero $A,B\in\Q[t]$, and we can similarily exclude the families $\mathcal{H}_w(t)$, $\mathcal{I}_w(t)$ and $\mathcal{J}_{m,w}(t)$.

Moreover, observe that a $\mathcal{F}$-family has an insipid place at $t^2-s$ if and only if $s=-3r^2$ for some $r\in\Z$. This proves the first statement of Theorem \ref{cor: main}.

To prove the second statement of Theorem \ref{cor: main}, observe that we are left to studying the root number of the fibres of the families $\mathcal{F}_{-3r^2}(au+b)$ and that it is exactly what Theorem \ref{mainthm} does.

\subsection{Decomposition of the root number into product of the local contributions}\label{sec:define_mod_root_number}
In order to study the variation of the root number $W(\E_t)$ of a family of elliptic curves $\E_t$ more easily, we reduce this to the study of local root numbers at each prime $p$. As remarked in \S \ref{sec:def_root}, this makes the computation of the root number more accessible. However the decomposition in the form of \eqref{eqn:local_factors} does not yet allow us to study the local factors independently. Thus we formulate another decomposition of $W(\E_t)$, already used in \cite{Manduchi}, \cite{Helfgott} and \cite{Desjardins1}. 

For each pair of integers $(a,b)\in\Z\times\Z$ and even integer $\delta$, define the \textit{modified Jacobi symbol} by
    \begin{equation}\label{quadraticsymboldef}\left(\frac{a}{b}\right)_\delta:=\prod_{p\nmid\delta}{\left(\frac{a_{p}}{p}\right)^{\nu_p(b)}}\end{equation}
where the product runs through the prime number $p\nmid\delta$, $(\frac{\cdot}{p})$ is Legendre symbol, and $a_{p}$ is the integer such that $a=p^{\nu_p(a)}a_{p}$. If $a,b,\delta$ are two-by-two coprime, then the symbol $(\frac{a}{b})_\delta$ is the classical Jacobi symbol.

Let $\E$ be a family of elliptic curves, and denote by $\E(t)$ each of the fibres at $t\in\Z$. Suppose that $\E$ has only places of reduction II, II$^*$ or I$_m^*$, except perhaps at $-\mathrm{deg}$. (Then, as we saw in the previous subsection, if we suppose that the coefficients $a_i(t)\leq2$ for all $i=2,4,6$, then by \cite{BDD} $\E$ is isomorphic either to $\mathcal{F}_s(t)$ or $\mathcal{L}_{w,s,v}(t)$.) Let $B(t)$ be the product of all finite places of $\Q(t)$ such that $\E$ has bad reduction. Then for every prime $p$, define the local functions $w_p^*(\E(t)):\mathbb{Z}_p\rightarrow\mathbb{C}$ by
    \begin{align}\label{def:local_functions}
        w_p^*(\E(t)):=\begin{cases}
        \mathrm{sgn}(B(t))\left(\frac{-1}{B(t)}\right)_{2}W_2(\E(t)),& \text{if }p=2\\
        (-1)^{\nu_3(B(t))}W_3(\E(t)),& \text{if }p=3\\
        W_p(\E(t))\left(\frac{-1}{p}\right)^{\nu_p(B(t))},& \text{if }p\geq 5.
\end{cases}
    \end{align}
When the family of elliptic curves is clear from context, we write $w_p^*(t)$ in place of $w_p^*(\E(t))$.
Propositions \ref{prop: w_p^*(t)}, \ref{prop: w_3^*(t)} and \ref{prop: w_2^*(t)} give the value of these local functions when $\E(t)=\mathcal{F}_s(t)$ according to the parameter $s$. Having defined the local function $w_p^*$, we now have the following decomposition of the root number.

\begin{theorem}\label{thm: indep} Let $\E(t)$ be a family of elliptic curves. Then $$W(\E(t))=-\prod_{p<\infty}w_p^*(\E(t)).$$
Moreover each of the functions $w_p^*(\E(t))$ is locally constant outside a finite set of points. 
\end{theorem}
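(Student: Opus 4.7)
The plan is to begin with the classical factorization $W(\E(t)) = W_\infty(\E(t)) \prod_{p<\infty} W_p(\E(t))$, together with the fact that $W_\infty(E) = -1$ for every elliptic curve over $\Q$. This produces the leading minus sign in the claimed identity, and reduces the first part of the theorem to showing that the correction factors built into $w_p^*$ collectively cancel. Substituting the definition in \eqref{def:local_functions}, the task amounts to verifying
\[
\mathrm{sgn}(B(t)) \left(\frac{-1}{B(t)}\right)_{2} (-1)^{\nu_3(B(t))} \prod_{p \geq 5}\left(\frac{-1}{p}\right)^{\nu_p(B(t))} = 1,
\]
once conventions on $B(t)$ are fixed so that $\mathrm{sgn}(B(t))$ accounts for the (unique) infinite-place contribution. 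The key algebraic observation is that, by the very definition of the modified Jacobi symbol in \eqref{quadraticsymboldef}, one has $\left(\frac{-1}{B(t)}\right)_2 = (-1)^{\nu_3(B(t))}\prod_{p\geq 5}\left(\frac{-1}{p}\right)^{\nu_p(B(t))}$, so the product of the last three factors is a perfect square and thus equal to $1$.

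For the second statement, I would argue prime by prime. For a fixed prime $p$, the local root number $W_p(\E(t))$ depends only on the Kodaira reduction type of $\E(t)$ at $p$ and on a small number of auxiliary invariants, which can all be extracted from the coefficients of the Weierstrass model modulo a bounded power of $p$; this is the content of the formulas of Rohrlich for $p \geq 5$ and of Rizzo for $p=2,3$. Since valuations and residues of a polynomial modulo $p^N$ are locally constant functions of $t \in \Z_p$ outside the finite set of $p$-adic zeros of $\Delta(t)$ (and a handful of boundary points where valuations jump), the same is true of $W_p(\E(t))$. The extra correction factor attached to $W_p$ in the definition of $w_p^*$ involves only $\nu_p(B(t))$, $\mathrm{sgn}(B(t))$ and $\left(\frac{-1}{B(t)}\right)_2$, each of which is itself locally constant in $t$ away from a finite set, so $w_p^*(\E(t))$ inherits local constancy.

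The main obstacle is the sign bookkeeping in the product formula: reconciling the symbol $\left(\frac{-1}{B(t)}\right)_2$, the global factor $\mathrm{sgn}(B(t))$ attached to $w_2^*$, the factor $(-1)^{\nu_3(B(t))}$ at the prime $3$, and the characters $\left(\frac{-1}{p}\right)^{\nu_p(B(t))}$ at $p\geq 5$ into a single identity. Conceptually this is a repackaging of quadratic reciprocity for $-1$; in practice, it requires being careful about whether $B(t)$ is taken monic, primitive, or signed, and about which global sign is being absorbed into $W_\infty$. Once the conventions in \eqref{def:local_functions} and \eqref{quadraticsymboldef} are aligned, the remaining algebra is mechanical, and the local-constancy half is then a direct consequence of the tabulated behaviour of $W_p$ in Rohrlich and Rizzo.
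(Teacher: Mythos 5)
Your treatment of the second assertion (local constancy) is fine and is essentially what the paper does: each $w_p^*$ is a finite product of Jacobi/Hilbert symbols in $p$-adic data of $t$, hence locally constant outside the zeros of $\Delta$. The first assertion is where the proposal breaks down. You start from the Euler product $W(\E(t))=W_\infty\prod_{p<\infty}W_p(\E(t))$ with $W_\infty=-1$ and claim that the extra factors in \eqref{def:local_functions} cancel. But by your own (correct) observation that $\left(\tfrac{-1}{B(t)}\right)_2=(-1)^{\nu_3(B(t))}\prod_{p\geq5}\left(\tfrac{-1}{p}\right)^{\nu_p(B(t))}$ under the definition \eqref{quadraticsymboldef}, those factors multiply to $\mathrm{sgn}(B(t))$, not to $1$, and your displayed identity is false whenever $B(t)<0$. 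The leftover sign cannot be "absorbed into the infinite place": $W_\infty(E)=-1$ for every elliptic curve over $\Q$, independently of the sign of $B(t)$, and $B(t)$ is a fixed product of the polynomials of bad reduction which genuinely changes sign as $t$ varies. You flag this as "the main obstacle" and then assert it is resolved by aligning conventions; it is not, and this is precisely the step that carries the content.

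The underlying problem is that the paper does not derive the identity from the naive Euler product. It invokes \cite[Theorem 3.4]{Desjardins1}, which gives $W(\E(t))=-\left(\tfrac{-1}{B(t)}\right)_\delta\prod_{p\mid\delta}W_p(\E(t))$: there the local root numbers at the varying primes $p\geq5$ dividing $B(t)$ have already been converted, via Rohrlich's formulas and quadratic reciprocity, into the single symbol $\left(\tfrac{-1}{B(t)}\right)_\delta$ together with correction terms $h_P(t)$, and the key point is that $h_P(t)=+1$ because every bad place is insipid (a standing hypothesis from the paragraph defining $w_p^*$, which the theorem statement inherits). The proof then only has to redistribute $\left(\tfrac{-1}{B(t)}\right)_\delta$ among the local functions via $\left(\tfrac{-1}{B(t)}\right)_\delta=\mathrm{sgn}(B(t))\left(\tfrac{-1}{B(t)_{2}}\right)(-1)^{\nu_3(B(t))}\prod_{p\geq5}\left(\tfrac{-1}{p}\right)^{\nu_p(B(t))}$; in particular the factors attached to $w_2^*$ and $w_3^*$ are not corrections designed to cancel against anything, they are the per-prime pieces of that global reciprocity term. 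Your argument never uses insipidity and would apply verbatim to an arbitrary family, which is a reliable sign that the essential input has been bypassed. To close the gap you would need to start from (or reprove) the cited reciprocity formula rather than the bare Euler product.
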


This guarantees a certain independency of those functions, so that the global root number varies over $\Z$ if and only if one of them varies over $\Z_p$ for some $p$.

\begin{proof}
These functions correspond to those that we find using \cite[Proposition A.4]{Desjardins1}: they are locally constant because they are finite products of Jacobi or Hilbert symbols. The equality follows directly from \cite[Theorem 3.4]{Desjardins1} since we have 
$$W(\E(t))=-\left(\frac{-1}{B(t)}\right)_\delta\prod_{p\mid\delta}W_p(\E(t)).$$
Indeed, since every factor $P(t)$ of $B(t)$ is insipid, the $h$-contribution defined in \cite{Desjardins1} -- usually more complicated -- is simply $h_P(t)=+1$ in our case. To complete the proof, observe that $$\left(\frac{-1}{B(t)}\right)_\delta=\mathrm{sgn}(B(t))\left(\frac{-1}{B(t)_{2}}\right)(-1)^{\nu_3(B(t))}\prod_{p<\infty,\not=2,3}\left(\frac{-1}{p}\right)^{\nu_p(B(t))}.$$
\end{proof}


\subsection{Decomposition of the root number for $\mathcal{F}_s(t)$ and $\mathcal{L}_{w,s,v}(t)$}\label{section:familyL}
Let $s\in\Z$ and consider a family of the form
$\mathcal{F}_s(t): y^2=x^3+3tx^2+3sx+st$. We have the following $c_4$- and $c_6$-invariants and the discriminant:
    \begin{align*}
        c_4(t)&=2^43^2(t^2-s),\\
        c_6(t)&=-2^63^3st(t^2-s),\\ \Delta(t)&=-2^63^3s(t^2-s)^2.
    \end{align*}
The only finite place of bad reduction is $P(t)=t^2-s$, which has Kodaira type II. Moreover, a quick observation gives $\delta=6$ and $M_\E(t)=1$. By \cite[Theorem 3.4]{Desjardins5}, we have
    \begin{align}\label{eqn:decomp_root}
        W(\mathcal{F}_s(t))&=-W_2(\mathcal{F}_s(t))W_3(\mathcal{F}_s(t))\left(\frac{-1}{P(t)_{(2)}}\right)\\
        &\qquad \times \prod_{p\geq5,p^2\mid t^2-s}{\begin{cases}\left(\frac{-3}{p}\right),&\nu_p(t^2-s)\equiv2,4\mod 6\\1,&\text{otherwise.}\end{cases}}\nonumber
    \end{align}
From the definition of $w_p^*(t)$ from \eqref{def:local_functions}, we get
    \begin{equation}\label{eq: w_p^*}
    w_p^*(\mathcal{F}_s(t))=
    \begin{cases}
    \mathrm{sgn}(t^2-s)\left(\frac{-1}{t^2-s}\right)_{2}W_2(\mathcal{F}_s(t)),& \text{if }p=2\\
    (-1)^{\nu_3(t^2-s)}W_3(\mathcal{F}_s(t)),& \text{if }p=3\\
    \left(\frac{-1}{t^2-s}\right)_{p}
    \times \begin{cases}\left(\frac{-3}{p}\right),&\nu_p(t^2-s)\equiv2,4\pmod 6\\1,&\text{otherwise}
    \end{cases},& \text{if }p\geq 5.
\end{cases}\end{equation}
and hence we have the following consequence of Theorem \ref{thm: indep}:
    \begin{lemma} $W(\mathcal{F}_s(t))=-\prod_{p<\infty}w_p^*(\mathcal{F}_s(t))$.
        \end{lemma}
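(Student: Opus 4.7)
The plan is to obtain this lemma as an immediate corollary of Theorem \ref{thm: indep} applied to $\mathcal{F}_s(t)$. To do so, I need only verify that the hypothesis of that theorem is satisfied for this family, namely that $\mathcal{F}_s$ has bad reduction only at finite places of $\Q(t)$ of Kodaira type II, II$^*$, or I$_m^*$ (with the possible exception of the place at $-\mathrm{deg}$). This is read off directly from the discriminant $\Delta(t) = -2^6 3^3 s(t^2-s)^2$: the only non-constant irreducible factor in $\Q[t]$ is $P(t)=t^2-s$, so the unique finite place of bad reduction of the surface $\mathcal{F}_s/\Q$ corresponds to $P(t)$. Tate's algorithm at $P(t)$ identifies its Kodaira type as II, as already recorded in Table \ref{table: kodairaBDD} and used implicitly in writing \eqref{eqn:decomp_root}. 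The numerical factors $2^6, 3^3, s$ are not finite places of $\Q(t)$ and play no role in this check; they only influence the reduction of specific integer fibres and are already accounted for inside the local factors $W_p$.

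With the hypothesis in hand, Theorem \ref{thm: indep} delivers the identity $W(\mathcal{F}_s(t)) = -\prod_{p<\infty} w_p^*(\mathcal{F}_s(t))$ verbatim, where $w_p^*$ is the local function attached to $\mathcal{F}_s$ by \eqref{def:local_functions}. To confirm that this matches the statement of the lemma as it will be used in the sequel, I would briefly check that specialising \eqref{def:local_functions} with $B(t)=t^2-s$ recovers the explicit formulas collected in \eqref{eq: w_p^*} for $p=2$, $p=3$, and $p \geq 5$; for the last case, the Kodaira-II shape of Rohrlich's formula produces the bracketed factor $\left(\tfrac{-3}{p}\right)$ exactly when $\nu_p(t^2-s) \equiv 2,4 \pmod 6$.

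There is essentially no obstacle to this argument, since all the substantive work is already contained in the proof of Theorem \ref{thm: indep} (which encapsulates \cite[Theorem 3.4]{Desjardins1} together with the fact that insipid places contribute trivially to the $h$-factor). The lemma is simply the specialisation of that general decomposition to a family with a single finite place of bad reduction, and the formula follows by direct substitution with no further case analysis required.
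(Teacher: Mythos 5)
Your proposal is correct and follows essentially the same route as the paper: the lemma is presented there as a direct consequence of Theorem \ref{thm: indep}, obtained by specialising the decomposition of \cite[Theorem 3.4]{Desjardins5} to the single finite place of bad reduction $P(t)=t^2-s$ (type II) and matching the definition \eqref{def:local_functions} against the explicit formulas \eqref{eq: w_p^*}. Your verification of the hypothesis on reduction types and the remark that the constants $2^6 3^3 s$ in $\Delta$ contribute no finite place of $\Q(t)$ are exactly the checks the paper performs implicitly.
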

We note that each $w_p^*(\mathcal{F}_s(t))$ is independent from the others and so the root number is constant if and only if $w_p^*(\mathcal{F}_s(t))$ is constant for each $p$.
We use formulas from \cite{Chi} and \cite{BDD}, slightly modified to satisfy our needs, to compute the functions $w_p^*$. These are reported in Appendix \ref{sec: app} in Proposition \ref{prop: w_p^*(t)} for $p\geq5$, Proposition \ref{prop: w_3^*(t)} for $p=3$ and Proposition \ref{prop: w_2^*(t)} for $p=2$. 

    Now we write a similar decomposition for the family $\mathcal{L}_{w,s,v}(t)$. Let $T=t^2+v$, so that
    \begin{align*}
        c_4(\mathcal{L}_{w,s,v}(t))&=2^43^2((Tw)^2-sw^2),\\ c_6(\mathcal{L}_{w,s,v}(t))&=-2^63^3Tw((Tw)^2-sw^2),\\
        \Delta(\mathcal{L}_{w,s,v}(t))&=-2^63^3sw^2((Tw)^2-sw^2)^2.
    \end{align*}
This means that $\mathcal{L}_{w,s,v}(t)=\mathcal{F}_{sw^2}(w(t^2+v))$ and so using \eqref{eqn:decomp_root} we get
    \begin{align*}
        W(\mathcal{L}_{w,s,v}(t))
        &=W\left({\mathcal{F}_{sw^2}}\left(w\left(t^2+v\right)\right)\right)\\
        =&-W_2\left(\mathcal{F}_{sw^2}\left(w(t^2+v)\right)\right)W_3\left(\mathcal{F}_{sw^2}\left(w(t^2+v)\right)\right)\left(\frac{-1}{P(T)_{(2)}}\right)\\
	&\times \prod_{p\geq5,p^2\mid T^2-s}{\begin{cases}\left(\frac{-3}{p}\right),&\nu_p(T^2-s)\equiv2,4\pmod 6\\1,&\text{otherwise}\end{cases}}
    \end{align*}
from which we may deduce the following.
\begin{lemma}\label{lem: familyL}
$W\left(\mathcal{L}_{w,s,v}(t)\right)=-\prod_{p<\infty}{w_p^*\left(\mathcal{F}_{sw^2}(w(t^2+v))\right)}.$
\end{lemma}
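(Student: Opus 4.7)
The plan is to view this lemma as an immediate corollary of the equality $\mathcal{L}_{w,s,v}(t) = \mathcal{F}_{sw^2}(w(t^2+v))$ together with Theorem \ref{thm: indep} applied to the family $\mathcal{F}_{sw^2}$.

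First, I would make precise the claimed identification of families: from the $c_4$, $c_6$, and $\Delta$ computed just before the lemma, the curves $\mathcal{L}_{w,s,v}(t)$ and $\mathcal{F}_{sw^2}(w(t^2+v))$ have the same standard invariants (the map $(x,y) \mapsto (x/w, y/w^2)$ converts the model $wy^2 = x^3 + 3Tx^2 + 3sx + sT$ into $y^2 = x^3 + 3(wT)x^2 + 3(sw^2)x + (sw^2)(wT)$). Consequently they are $\Q$-isomorphic as elliptic curves for every $t$, so
$$W\left(\mathcal{L}_{w,s,v}(t)\right) = W\left(\mathcal{F}_{sw^2}(w(t^2+v))\right).$$

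Second, since the family $\mathcal{F}_{sw^2}(T)$ has its only finite place of bad reduction at $P(T)=T^2-sw^2$ of Kodaira type II, it falls under the hypotheses of Theorem \ref{thm: indep}. Applying that theorem at the integer $T = w(t^2+v)$ yields
$$W\left(\mathcal{F}_{sw^2}(w(t^2+v))\right) = -\prod_{p<\infty} w_p^*\left(\mathcal{F}_{sw^2}(w(t^2+v))\right),$$
which combined with the previous equality gives the claimed identity. The display preceding the lemma already exhibits the same product in its unpackaged form; the only remaining step is to recognize that the factors $W_2$, $W_3$, the modified Jacobi symbol $\left(\tfrac{-1}{P(T)_{(2)}}\right)$, and the tails over primes $p \geq 5$ are precisely what get absorbed into the functions $w_p^*$ via their definition in \eqref{def:local_functions} (equivalently \eqref{eq: w_p^*}).

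There is no real obstacle: the identification of $\mathcal{L}_{w,s,v}$ as a reparametrization of $\mathcal{F}_{sw^2}$ was already recorded, and Theorem \ref{thm: indep} does all the local bookkeeping. The only point that needs a small sanity check is that the $\mathcal{F}$-decomposition formula \eqref{eqn:decomp_root} continues to apply after the substitution $T = w(t^2+v)$, which it does because $w, v$ are chosen so that $w(t^2+v)$ is an integer (and hence a $p$-adic integer) for every integer $t$, so the local computations in Appendix \ref{sec: app} remain valid.
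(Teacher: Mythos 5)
Your argument is correct and is essentially the paper's own: the authors likewise record the invariants of $\mathcal{L}_{w,s,v}(t)$, observe $\mathcal{L}_{w,s,v}(t)=\mathcal{F}_{sw^2}(w(t^2+v))$, and read the lemma off the decomposition \eqref{eqn:decomp_root} (equivalently Theorem \ref{thm: indep}) with the local factors repackaged into the $w_p^*$ via \eqref{def:local_functions}. One cosmetic slip: the isomorphism from the model $wy^2=x^3+3Tx^2+3sx+sT$ to the short-coefficient model sends a point $(x,y)$ to $(wx,w^2y)$ (equivalently, substitute $x\to x/w$, $y\to y/w^2$ in the target equation), not the other way around, but this does not affect the proof.
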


\section{Proof of Theorem \ref{mainthm}}

To emphasize the need to consider families given by specializing $t$ to arithmetic progressions $a\Z+b \subset \Z$, we insist on the fact that we look at the \textit{integer fibres} of families of elliptic curves. This means that we need to distinguish between families who are isomorphic over $\Q$.  In Section \ref{sec: 3.1}  we illustrate this fact using the result of Theorem \ref{thm: simpler}. It says that no family of the form $\mathcal{F}_s(t)$ has constant root number over $\Z$; however, we know that Washington's example does satisfy this property. This is because Washington's example is a linear subfamily of a $\mathcal{F}_s$ family.

Thus we must extend our study of the function $w_p^*$ to the subset $a\mathbb{Z}+b\subset\mathbb{Z}$; in other words, study the variation of the root number on the surfaces $\mathcal{F}_s(au+b)$ and $\mathcal{L}_{w,s,v}(au+b)$, families parametrized by the variable $u \in \Z$. Although these are technically isomorphic over $\Q$ to respectively $\mathcal{F}_s(t)$ and $\mathcal{L}_{w,s,v}(t)$, their integer fibres do not correspond.

Therefore we need in order to be general to include in our study all linear subfamilies of $\mathcal{F}_s(t)$, that is to say those where we take $t\in a\mathbb{Z}+b$ (write $t=au+b$ and take $u\in\Z$). This is what we do in Section \ref{sec: 3.2}.

In Sections \ref{sec: 3.1} and \ref{sec: 3.2} we write $w_p^*(t)$ in place of $w_p^*(\mathcal{F}_s(t))$ and in Section \ref{sec: 3.3} $w_p^*(t)$ replaces $w_p^*(\mathcal{L}_{w,s,v}(t))$.

\subsection{Families $\mathcal{F}_s(t)$ with $t\in \Z$}
\label{sec: 3.1}

We begin with this special case of our main theorem where we run the parameter $t$ through all integers. 

\begin{theorem} \label{thm: simpler}For every fixed $s\in \mathbb{Z}$, $W_{\pm}(\mathcal{F}_s(t),\mathbb{Z}) = \infty$.
	In other words, there does not exist $s\in \mathbb{Z}$ such that $\mathcal{F}_s(t)$ has constant root number for all $t\in \mathbb{Z}$.
\end{theorem}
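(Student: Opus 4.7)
The plan is to apply the decomposition $W(\mathcal{F}_s(t)) = -\prod_{p<\infty} w_p^*(\mathcal{F}_s(t))$ together with the local constancy of each $w_p^*$ from Theorem \ref{thm: indep}. It will suffice to exhibit a single prime $p$ at which $w_p^*(t)$ takes both values $\pm 1$ as $t$ varies over $\Z_p$, and then to upgrade this local variation to infinite variation of the global root number over $\Z$.

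Finding such a prime breaks into two cases according to whether $s$ is the negative of a square. Assume first that $s \neq -k^2$ for every $k \in \Z$. Using quadratic reciprocity and Dirichlet's theorem, I would pick $p \geq 5$ with $p \equiv 3 \pmod 4$, $p \nmid 6s$, and $\left(\frac{s}{p}\right)=1$; such primes exist in infinite number because the hypothesis on $s$ ensures $\Q(\sqrt{s})$ and $\Q(i)$ are linearly disjoint, so the two congruence conditions on $p$ are compatible. By \eqref{eq: w_p^*}, $w_p^*(t) = +1$ whenever $p\nmid (t^2-s)$, and $w_p^*(t) = \left(\frac{-1}{p}\right) = -1$ whenever $\nu_p(t^2-s)=1$, the latter being realised by any $t$ that is a square root of $s$ modulo $p$ but not modulo $p^2$.

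For the remaining case $s=-k^2$, the condition $\left(\frac{s}{p}\right)=1$ forces $p\equiv 1 \pmod 4$, so the above strategy fails. I would instead pick $p\equiv 5 \pmod {12}$ with $p\nmid 6k$, so that $\left(\frac{-1}{p}\right)=+1$ while $\left(\frac{-3}{p}\right)=-1$, and $s$ is automatically a quadratic residue modulo $p$. Hensel lifting then produces $t\in\Z$ with $\nu_p(t^2-s)=2$ exactly, and \eqref{eq: w_p^*} yields $w_p^*(t) = \left(\frac{-3}{p}\right)=-1$, whereas $p\nmid(t^2-s)$ again gives $w_p^*(t)=+1$.

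Finally, I need to lift the local variation of $w_p^*$ into infinitely many $t\in\Z$ of each sign of $W(\mathcal{F}_s(t))$. Since each $w_q^*(t)$ depends only on $t$ modulo a power of $q$, the Chinese remainder theorem lets me fix the contributions of any prescribed finite set of primes $q\neq p$ while letting $t$ run over the two residue classes modulo $p^{N_p}$ on which $w_p^*$ differs. I expect the main obstacle to lie here: for primes $q$ \emph{outside} this prescribed finite set, $w_q^*(t)$ can still be nontrivial whenever $q\mid (t^2-s)$. To handle this I would further restrict $t$ to an arithmetic progression on which $t^2-s$ is squarefree away from a fixed finite set of primes---a condition holding on a positive-density subset since $\deg(t^2-s)=2$ puts us in the unconditional range of the Squarefree conjecture---so that the large-prime contributions collapse into a Jacobi symbol on the squarefree part of $t^2-s$, whose values can be shown by a standard sieve to realise both signs infinitely often.
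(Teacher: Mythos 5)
There is a genuine gap, and it sits exactly where the paper's difficulty lies. Your Case 1 rests on the claim that $w_p^*(t)=\left(\frac{-1}{p}\right)$ whenever $\nu_p(t^2-s)=1$ and $p\nmid 6s$. That is a misreading of \eqref{eq: w_p^*}: the twisting factor $\left(\frac{-1}{p}\right)^{\nu_p(t^2-s)}$ built into the definition \eqref{def:local_functions} of $w_p^*$ exactly cancels the local root number $W_p=\left(\frac{-1}{p}\right)$ of a type II fibre, so that $w_p^*(t)=1$ there. The correct values (Table \ref{table: w_p^*(t)}, restated explicitly in the proof of Theorem \ref{thm: condition sab}) are: for $p\nmid 6s$, $w_p^*(t)=\left(\frac{-3}{p}\right)$ if $\nu_p(t^2-s)\equiv 2,4\pmod 6$, and $w_p^*(t)=1$ otherwise. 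Hence to detect variation at such a prime you need simultaneously $\left(\frac{s}{p}\right)=1$ (so that $p\mid t^2-s$ can occur) and $\left(\frac{-3}{p}\right)=-1$; your condition $p\equiv 3\pmod 4$ buys nothing.

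This is not a repairable slip within your framework: the two requirements are incompatible precisely when $s=-3r^2$, since then $t^2\equiv -3r^2\pmod p$ forces $\left(\frac{-3}{p}\right)=1$ for every $p\nmid 6s$ dividing $t^2-s$. For such $s$ --- e.g.\ $s=-3$, which is not of the form $-k^2$ and so falls into your broken Case 1 --- every $w_p^*$ with $p\geq 5$, $p\nmid s$ is identically $1$, and Lemma \ref{lem: p=3} shows $w_3^*$ is constant as well; the only prime that can witness non-constancy is $p=2$. That is why the paper does not hunt for an auxiliary odd prime at all: it proves in Lemma \ref{constant local} that $w_2^*(t)$ is non-constant on $\Z$ for \emph{every} $s$, by a case analysis on $\nu_2(s)\bmod 4$ and $s_2\bmod{16}$ using Proposition \ref{prop: w_2^*(t)}. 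Your Case 2 (for $s=-k^2$, taking $p\equiv 5\pmod{12}$ and forcing $\nu_p(t^2-s)=2$) is correct, and your final step --- restricting to $t$ with $t^2-s$ squarefree away from a fixed set so that the remaining local factors depend only on $t$ modulo a fixed modulus --- is a reasonable way to make the ``independence'' in Theorem \ref{thm: indep} precise. But without an analysis at $p=2$ (or more generally at the primes dividing $6s$), the argument cannot cover all $s$, and in particular misses the family of values $s=-3r^2$ that the rest of the paper is about.
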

	\begin{proof}
	By Theorem \ref{thm: indep}, we have $W(\mathcal{F}_s(t))=-\prod_{p<\infty}{w_p^*(t)}$, where the functions $w_p^*(t)$ are independent from one another. This means that if there exists a prime $p$ such that $w_{p}^*(t)$ is not constant for all $t\in \Z$, then the root number $W\left(\mathcal{F}_s(t)\right)$ is not constant. The result then follows from Lemma \ref{constant local}.
	\end{proof}

\begin{lemma}\label{constant local}	
	Fix an $s \in \Z$. Then $w_2^*(t)$ is not constant for $t\in \Z$.
\end{lemma}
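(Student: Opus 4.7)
The plan is to use the explicit closed-form description of $w_2^*(\mathcal{F}_s(t))$ supplied by Proposition~\ref{prop: w_2^*(t)} in the Appendix, which expresses $w_2^*(t)$ as a product of $\mathrm{sgn}(t^2-s)$, the modified Jacobi symbol $\left(\frac{-1}{t^2-s}\right)_2$, and $W_2(\mathcal{F}_s(t))$, each determined by $\nu_2(s)$, $\nu_2(t^2-s)$, and certain residues of $t, s$ modulo small powers of $2$. In particular $w_2^*$ is locally constant on $\Z_2$ away from the zeros of $t^2-s$, so it suffices, for each fixed $s \in \Z$, to exhibit two integers $t_1, t_2 \in \Z$ producing different values of $w_2^*$.

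I would organize the argument by case analysis on $\nu_2(s)$: first splitting off the case when $s$ is a $2$-adic unit, and then grouping the remaining cases according to $\nu_2(s)\bmod 4$ (together with $s_2 \bmod 8$, where $s_2$ is the odd part). In each such case the appendix formula reduces $w_2^*(t)$ to a product of a few symbols whose values depend only on the residue of $t$ modulo a bounded power of $2$ and, when $s>0$, on whether $t^2 < s$ or $t^2 > s$. The key conceptual point is that because $t$ is allowed to run over all of $\Z$ (not a fixed arithmetic progression), I have complete freedom to prescribe both the $2$-adic valuation of $t$ and its residue modulo any $2^k$.

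The construction of the distinguishing pair $(t_1, t_2)$ then proceeds as follows. When $s>0$, the easiest path is to take $t_1 = 0$ (so that $t_1^2 - s = -s<0$) and $t_2$ with $|t_2| \gg \sqrt{s}$ sharing the same residue as $t_1$ modulo the relevant power of $2$, so that the $W_2$-factor and the Jacobi-symbol factor agree at $t_1$ and $t_2$ while only $\mathrm{sgn}(t^2-s)$ flips. When $s \leq 0$, the sign factor is unavailable; instead one exhibits $t_1, t_2$ whose $2$-adic valuations (or residues modulo $4$ or $8$) differ in a way which changes $\nu_2(t^2-s) \bmod 2$ and thereby flips $\left(\frac{-1}{t^2-s}\right)_2 W_2(\mathcal{F}_s(t))$. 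This is where Rizzo's tables for $p=2$ cited in the Appendix are needed, since the reduction type of $\mathcal{F}_s(t)$ at $2$ varies with these residues.

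The principal obstacle is not a conceptual one but the bookkeeping: Proposition~\ref{prop: w_2^*(t)} involves many sub-cases indexed by $\nu_2(s) \bmod 4$ and residues modulo~$8$, and one must verify that a distinguishing pair exists in every single one. The verification in each sub-case is, however, routine: the absence of any congruence restriction on $t$ means each sub-case admits an explicit choice, and no subtle cancellation can force $w_2^*$ to be globally constant on $\Z$.
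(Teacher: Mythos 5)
Your proposal has a genuine flaw in its main device for $s>0$, and the remaining cases are not actually carried out. The sign-flip trick cannot work: by construction, $w_2^*(t)=\mathrm{sgn}(t^2-s)\left(\frac{-1}{t^2-s}\right)_2 W_2(\mathcal{F}_s(t))$ is a \emph{locally constant function of $t\in\Z_2$} (this is the content of Theorem \ref{thm: indep} and the reason the sign is built into the definition \eqref{def:local_functions}). Concretely, $\mathrm{sgn}(n)\left(\frac{-1}{n}\right)_2$ equals $+1$ precisely when the signed odd part $n_2$ is $\equiv 1\pmod 4$, so the product of the sign and the modified Jacobi symbol depends only on $(t^2-s)_2\bmod 4$, i.e.\ only on the residue of $t$ modulo a bounded power of $2$. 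If you choose $t_1=0$ and $t_2$ with $|t_2|\gg\sqrt{s}$ in the same residue class modulo a high power of $2$, then $(t_1^2-s)_2\equiv(t_2^2-s)_2\pmod 4$ as signed integers, the Jacobi-symbol factor flips \emph{together with} the sign, and $w_2^*(t_1)=w_2^*(t_2)$. Your own opening observation that $w_2^*$ is locally constant on $\Z_2$ already rules out distinguishing two $2$-adically close integers; this is exactly why the tables of Proposition \ref{prop: w_2^*(t)} are indexed purely by $2$-adic data ($\nu_2(s)-2\nu_2(t)$, $s_2$, $t_2$) with no reference to the archimedean sign.

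For the remaining cases your strategy (vary $\nu_2(t)$ and $t_2$ modulo $4$ or $8$ to change the value read off from the tables) is the correct one and is what the paper does for \emph{all} $s$, but declaring the verification "routine" and asserting that "no subtle cancellation can force $w_2^*$ to be globally constant" assumes precisely what is to be proved. That assertion is not harmless: the analogous function $w_p^*$ for $p\geq 5$ with $p\nmid s$ \emph{is} identically $1$, and $w_p^*$ restricted to suitable arithmetic progressions is constant under the conditions of Theorem \ref{mainthm}, so constancy is a real possibility that must be excluded case by case. The paper's proof consists exactly of this exclusion: a case analysis on $\nu_2(s)$ (the cases $\nu_2(s)=0$, $\equiv 0\pmod 4$ with $\nu_2(s)>0$, $\equiv 1\pmod 4$, $\nu_2(s)=2$, $\equiv 2\pmod 4$ with $\nu_2(s)>2$, $\equiv 3\pmod 4$, with further subcases on $s_2\bmod{16}$), exhibiting in each an explicit pair $t,t'$ with prescribed $\nu_2$ and residues for which the tables return opposite values. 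To complete your argument you would need to supply those pairs; the sign-based shortcut does not replace them.
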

	\begin{proof}

		 We show that $w_2^*(t)$ is not constant by finding integers $t$ and $t'$ with different local root numbers at $p=2$.
	    We analyze case by case 
		using Proposition \ref{prop: w_2^*(t)}.
				
			\begin{enumerate}[wide=0pt]
			\item    If $\nu_2(s)=0$, then $\nu_2(s)-2\nu_2(t)\leq 0$. We look at cases $s_2 \pmod{16}$.
				\begin{itemize}
				\item    If $s_2\equiv 3\pmod{4}$, choose $t, t'$ s.t. $\nu_2(t)=1, \nu_2(t')=2$. Then $\nu_2(s)-2\nu_2(t)=-2$ so $w_2^*(t)=1$, while $\nu_2(s)-2\nu_2(t')=-4$ so $w_2^*(t')\equiv s_2\pmod{4}=-1$. 
				\item    If $s_2\equiv 1, 13\pmod{16}$, choose $t, t'$ such that (i) $\nu_2(t)=\nu_2(t')=1$ and (ii) $t_2\equiv 1 \pmod{4}, t'_2\equiv 3 \pmod{4}$. Then $\nu_2(s)-2\nu_2(t)=\nu_2(t)-2\nu_2(t')=-2$ and so $w_2^*(t)=1$ and $w_2^*(t')=-1$. 
				\item    If $s_2\equiv 5, 9\pmod{16}$, choose $t, t'$ such that (i) $\nu_2(t)=\nu_2(t')=1$ and (ii) $t_2\equiv 1 \pmod{4}, t'_2\equiv 3 \pmod{4}$. Then $\nu_2(s)-2\nu_2(t)=\nu_2(t)-2\nu_2(t')=-2$ and so $w_2^*(t)=-1$ and $w_2^*(t')=1$. 
				\end{itemize}
			\item    If $\nu_2(s)\equiv 0 \pmod{4}, >0$, let $\ell=(\nu_2(s)-2)/2$. Choose $t, t'$ such that (i) $\nu_2(t)=\nu_2(t')=\ell$ and (ii) $t_2\equiv s_2\pmod{4}, t'_2\equiv -s_2\pmod{4}$. Then $\nu_2(s)-2\nu_2(t)=\nu_2(s)-2\nu_2(t')=2$ and so $w_2^*(t)=1$ and $w_2^*(t')=-1$.\\
			
			\item    If $\nu_2(s)\equiv 1\pmod{4}$, let $\ell=(\nu_2(s)-1)/2$. Choose $t, t'$ such that $\nu_2(t)=\nu_2(t')=\ell$. Then $\nu_2(s)-2\nu_2(t)=\nu_2(s)-2\nu_2(t')=1$. Note that $t^2-s=2^{2l}t^2_2-2^{\nu_2(s)}s_2=2^{2l}(t^2_2-2s_2)$ where $2\nmid t_2^2-2s_2$, so $(t^2-s)_2=t_2^2-2s_2\equiv 1-2s_2\pmod{4} \equiv 3 \pmod{4}$. Then
				\begin{itemize}
				\item    If $s_2\equiv 1\pmod{4}$, choose $t, t'$ such that (i) $\nu_2(t)=\nu_2(t')=\ell$ and (ii) $t_2\equiv 1 \pmod{8}, t'_2\equiv 3\pmod{8}$. Then $w_2^*(t)\equiv (t^2-s)_2 \pmod{4} =-1$ and $w_2^*(t')\equiv -(t^2-s)_2\pmod{4} =1$. 
				\item    If $s_2\equiv 3\pmod{4}$, choose $t, t'$ such that (i) $\nu_2(t)=\nu_2(t')=\ell$ and (ii) $t_2\equiv 1 \pmod{8}, t'_2\equiv 5\pmod{8}$. Then $w_2^*(t)\equiv (t^2-s)_2 \pmod{4} =-1$ and $w_2^*(t')\equiv -(t^2-s)_2\pmod{4} =1$. 
				\end{itemize}
				
			\item    If $\nu_2(s)=2$, then $\nu_2(s)-2\nu_2(t)\leq 2$. We look at cases $s_2 \pmod{16}$.
				\begin{itemize}
				\item    If $s_2\equiv 1\pmod{4}$, choose $t, t'$ such that $\nu_2(t)=2, \nu_2(t')=3$. Then $\nu_2(s)-2\nu_2(t)=-2$ so $w_2^*(t)=-1$. And $\nu_2(s)-2\nu_2(t')=-4$ so $w_2^*(t')\equiv s_2\pmod{4}=1$. 
				\item    If $s_2\equiv 3, 7\pmod{16}$, choose $t, t'$ such that (i) $\nu_2(t)=\nu_2(t')=2$ and (ii) $t_2\equiv 1 \pmod{4}, t'_2\equiv 3 \pmod{4}$. Then $\nu_2(s)-2\nu_2(t)=\nu_2(t)-2\nu_2(t')=-2$ and so $w_2^*(t)=1$ and $w_2^*(t')=-1$. 
				\item    If $s_2\equiv 11, 15\pmod{16}$, choose $t, t'$ such that (i) $\nu_2(t)=\nu_2(t')=2$ and (ii) $t_2\equiv 1 \pmod{4}, t'_2\equiv 3 \pmod{4}$. Then $\nu_2(s)-2\nu_2(t)=\nu_2(t)-2\nu_2(t')=-2$ and so $w_2^*(t)=-1$ and $w_2^*(t')=1$.
				\end{itemize}
				
			\item    If $\nu_2(s)\equiv 2 \pmod{4}, >2$, let $\ell=(\nu_2(s)-4)/2$. Choose $t, t'$ such that (i) $\nu_2(t)=\nu_2(t')=\ell$ and (ii) $t_2\equiv 1\pmod{4}, t'_2\equiv 3\pmod{4}$. Then $\nu_2(s)-2\nu_2(t)=\nu_2(s)-2\nu_2(t')=4$ and so $w_2^*(t)=-1$ and $w_2^*(t')=1$. \\
			
			\item    If $\nu_2(s)\equiv 3 \pmod{4}$, let $\ell=(\nu_2(s)-3)/2$. Choose $t, t'$ such that $\nu_2(t)=\nu_2(t')=\ell$. Then $\nu_2(s)-2\nu_2(t)=\nu_2(s)-2\nu_2(t')=3$ and
				\begin{itemize}
				\item    If $s_2\equiv 1\pmod{4}$, choose $t, t'$ such that (i) $\nu_2(t)=\nu_2(t')=\ell$ and (ii) $t_2\equiv 1 \pmod{8}, t'_2\equiv 3\pmod{8}$. Then $w_2^*(t)=-1$ and $w_2^*(t')=1$. 
				\item    If $s_2\equiv 3\pmod{4}$, choose $t, t'$ such that (i) $\nu_2(t)=\nu_2(t')=\ell$ and (ii) $t_2\equiv 1 \pmod{8}, t'_2\equiv 5\pmod{8}$. Then $w_2^*(t)=1$ and $w_2^*(t') =-1$. 
				\end{itemize}
			\end{enumerate}				
	\end{proof}

\subsection{Families $\mathcal{F}_s(t)$ with $t\in a\mathbb{Z}+b$} \label{sec: 3.2}
In this section we prove Theorem \ref{mainthm}, which we state again now. 

\begin{theorem}\label{thm: condition sab}(=Theorem \ref{mainthm})
Let $s, a, b$ be such that $s=-3r^2$ where $r\in \mathbb{Z}$. Then $\mathcal{F}_s(au+b)$ has constant root number if and only if
	\begin{enumerate}
	\item $\nu_p(b)<\nu_p(a)$ or $\nu_p(s)/2 \leq \nu_p(a) \leq \nu_p(b)$ for all $p\geq 5$ such that $p$ divides $s$, and
	\item $\nu_3(b)<\nu_3(a)$ or $(\nu_3(s)-3)/2 \leq \nu_3(a)\leq \nu_3(b)$, and
    \item the triple $(s,a,b)$ respects the 2-adic conditions of Table \ref{table1}. \end{enumerate}
\end{theorem}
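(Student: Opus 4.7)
\medskip

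\noindent\textbf{Proof sketch.} The plan is to reduce the theorem to a prime-by-prime analysis. By Theorem \ref{thm: indep} and the lemma following equation \eqref{eq: w_p^*}, we have $W(\mathcal{F}_s(au+b)) = -\prod_{p<\infty} w_p^*(\mathcal{F}_s(au+b))$, and the factors $w_p^*$ are mutually independent and locally constant. Thus the root number is constant on $u\in\mathbb{Z}$ if and only if each $w_p^*(\mathcal{F}_s(au+b))$ is constant as a function of $u$. Only finitely many primes $p$ can contribute nontrivially, namely those dividing $s$ together with $p\in\{2,3\}$. The proof then amounts to substituting $t = au+b$ in the formulas of Propositions \ref{prop: w_p^*(t)}, \ref{prop: w_3^*(t)} and \ref{prop: w_2^*(t)} and determining the precise conditions under which the resulting function of $u$ is constant on $\mathbb{Z}$.

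For a fixed prime $p$, the quantity $w_p^*$ depends only on $\nu_p((au+b)^2 - s)$ together with residues of $s_p$, $(au+b)_p$ and $((au+b)^2 - s)_p$ modulo low powers of $p$. The dichotomy that drives the analysis is $\nu_p(b) < \nu_p(a)$ versus $\nu_p(a) \leq \nu_p(b)$. In the first regime, $\nu_p(au+b) = \nu_p(b)$ for every $u$, and the higher $p$-adic digits of $au+b$ beyond the leading one are dominated by $au$, which is highly divisible by $p$; hence $(au+b)^2-s$ stabilises modulo sufficiently large powers of $p$ and $w_p^*$ is constant. In the second regime, $au+b$ ranges densely over $b+a\mathbb{Z}_p$ and one must ensure that $\nu_p((au+b)^2 - s) = \min(2\nu_p(a), \nu_p(s))$ is achieved uniformly and that the relevant residues are forced. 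A Hensel-type expansion yields the bound $\nu_p(s)/2 \leq \nu_p(a)$ in condition (1) and, for $p=3$ with $\nu_3(s) = 2\nu_3(r)+1$ odd, the shifted bound $(\nu_3(s)-3)/2 \leq \nu_3(a)$ in condition (2), where the shift by one unit reflects the extra refinement in Proposition \ref{prop: w_3^*(t)} in the regime $\nu_3(s) - 2\nu_3(t) \in \{1,3\}$.

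The main obstacle is the case $p=2$. Proposition \ref{prop: w_2^*(t)} branches according to $\nu_2(s) \bmod 4$, the residue $s_2 \bmod 16$, the sign and residue class of $\nu_2(s) - 2\nu_2(t)$, and residues of $t_2$ and $(t^2-s)_2 \bmod 4$ or $\bmod 8$. After substituting $t = au+b$ the relevant regimes are controlled by the three parameters $\nu_2(a) - \nu_2(b)$, $\nu_2(s) - 2\nu_2(b)$, and $\nu_2(s) \bmod 4$. I would first enumerate the subcases $\nu_2(a) = \nu_2(b)+k$ for small $k \geq 0$, then for each one split on the sign and class $\bmod 4$ of $\nu_2(s) - 2\nu_2(b)$, cross-referenced with $s_2 \bmod 4$ or $\bmod 8$, to obtain the seven families of 2-adic conditions listed in the statement. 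The remaining case $\nu_2(b) < \nu_2(a)$ (treated by item (f)) is similar to the $p\geq 5$ argument.

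Necessity in every subcase is obtained by reversing the reasoning of Lemma \ref{constant local}: when a listed condition fails, one exhibits explicit $u, u' \in \mathbb{Z}$ such that $au+b$ and $au'+b$ lie in different residue classes modulo the relevant power of $p$ while still respecting the forced valuation, and such that the formulas give $w_p^*(au+b) \neq w_p^*(au'+b)$; this is always possible precisely because $\nu_p(a) \leq \nu_p(b)$ lets $u$ modulate the higher $p$-adic digits of $au+b$. The hardest book-keeping occurs again at $p=2$, where several near-boundary regimes (for instance $\nu_2(s) - 2\nu_2(b) = 2$ or $=6$) need to be treated individually because they straddle different branches of Proposition \ref{prop: w_2^*(t)}; this is the reason for the tabular form of condition (3) and is what justifies encoding the 2-adic conditions in Table \ref{table1} in the restated Theorem \ref{thm: condition sab}.
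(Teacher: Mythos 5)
Your overall strategy coincides with the paper's: decompose $W(\mathcal{F}_s(au+b))$ into the independent local factors $w_p^*$ via Theorem \ref{thm: indep}, reduce to the primes dividing $s$, split each local analysis on $\nu_p(b)<\nu_p(a)$ versus $\nu_p(a)\le\nu_p(b)$, and prove necessity by exhibiting two fibres with different local root numbers. This is exactly how the paper proceeds, in Lemmas \ref{lem: p5}, \ref{lem: p=3} and \ref{lem: p=2}.

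Two points, however. First, the substantive one: for this theorem the casework \emph{is} the proof. The statement is a list of explicit valuation and congruence conditions, and nothing in your sketch derives any of them; every entry of Table \ref{table1} and both thresholds $\nu_p(s)/2\le\nu_p(a)$ and $(\nu_3(s)-3)/2\le\nu_3(a)$ are asserted rather than obtained, so the proposal remains a programme rather than a proof. Moreover the reduction to $p\mid s$ is not automatic: for $p\nmid s$ with $p\mid t^2-s$, Table \ref{table: w_p^*(t)} returns $\left(\frac{-3}{p}\right)$, and one needs the hypothesis $s=-3r^2$ to get $t^2\equiv-3r^2\pmod p$ with $p\nmid r$, hence $\left(\frac{-3}{p}\right)=1$; this is precisely where the shape of $s$ enters and your sketch does not record it. Second, one claim is false as stated: in the regime $\nu_p(a)\le\nu_p(b)$ the valuation $\nu_p(au+b)$ is \emph{not} uniform in $u$ (it jumps whenever $p\mid a_pu+p^{\nu_p(b)-\nu_p(a)}b_p$), so $\nu_p((au+b)^2-s)$ is not ``achieved uniformly.'' The correct mechanism, used in the paper, is that $\nu_p(s)/2\le\nu_p(a)$ forces $2\nu_p(t)-\nu_p(s)\ge0$ for every $u$, and in that branch of the table the output depends only on $\nu_p(s)\bmod 4$ (again via $\left(\frac{-3}{p}\right)=1$ when $2\nu_p(t)=\nu_p(s)$), not on any further data about $t$.
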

		\begin{table}[H]
		\small
		\caption{}\label{table1}
		\centering
		\begin{tabular}{c c l c }
		\hline
		$\nu_2(a)-\nu_2(b)$	&$\nu_2(s)\pmod 4$&	$\nu_2(s)-2\nu_2(a)$ &$b_2\pmod 4$\\
		\hline
		$>2$		&		&								\\
		2				&0	&$<-4$					&		\\
						&		&$2 \pmod 4, >-4$	&		\\
						&		&$0\pmod 4, >-4$	&1		\\
						&2	&$<-4$					&		\\
						&		&$-2$						&3	\\
						&		&$0\pmod 4, >-4$	&		\\
						&		&$2$						&		\\
						&		&$2\pmod 4, >2$		&1		\\
		$\leq 1$	&0	&$\leq -6$				&		\\
						&2	&$\leq -6$				&		\\
						&		&$-4$						&		\\
						\hline
		\end{tabular}
		\end{table}

\begin{proof}
First note that the assumption of $s=-3r^2$ implies that $\nu_p(s)\equiv 0 \pmod 2$ for $p\geq 5$, $\nu_3(s)\equiv 1 \pmod 2$, and $\nu_2(s)\equiv 0 \pmod 2$. 
By Theorem \ref{thm: indep}, we have $W(\mathcal{F}_s(au+b))=-\prod_{p<\infty}{w_p^*(au+b)}$ where $w_p^*(au+b)$ is as defined in \eqref{eq: w_p^*}. If $p\nmid s$, then $\nu_p(s)=0$ and so $2\nu_p(t)-\nu_p(s)\geq 0$. By Proposition \ref{prop: w_p^*(t)} Table \ref{table: w_p^*(t)}, if $2\nu_p(t)-\nu_p(s)> 0$, then $w_p^*(t)=1$ and if $2\nu_p(t)-\nu_p(s)=0$, then
    \begin{align*}
        w_p^*(t)=\begin{cases}
        (\frac{-3}{p}),&\nu_p(t^2-s)\equiv 2, 4 \pmod 6\\
        1,& \text{otherwise}.
        \end{cases}
    \end{align*}
In the case that $\nu_p(t^2-s)=0$ we have $w_p^*(t)=1$. Otherwise, $0\equiv t^2-s \equiv t^2+3r^2\pmod p$ and hence $w_p^*(t)=(\frac{-3}{p})=1$. Thus we have shown that
    \begin{align}
        W(\mathcal{F}_s(au+b))=-\prod_{p\mid s}{w_p^*(au+b)}
    \end{align}
Recall that these functions are independent from one another, which means that the root number $W\left(\mathcal{F}_s(au+b)\right)$ is constant over $\Z$ if and only if $w_p^*(au+b)$ is constant for each $p$. We split the proof into Lemmas \ref{lem: p5}, \ref{lem: p=3} and \ref{lem: p=2} which study $w_p^*(au+b)$ respectively when $p\geq5$, $p=3$ and $p=2$.
\end{proof}

\begin{rem}
	In order to find the value of $w_p^*(t)$ using the Propositions \ref{prop: w_p^*(t)}, \ref{prop: w_3^*(t)} and \ref{prop: w_2^*(t)}, we need to take into account some $p$-adic information such as $\nu_p(s)$, $2\nu_p(t)-\nu_p(s)$, $\nu_p(t^2-s)$, $t_p$, and others. It will be more convenient to order the cases with respect to whether $\nu_p(a)$ or $\nu_p(b)$ is higher, since if we
		let $m=\min(\nu_p(a),\nu_p(b))$, then
	\begin{align*}
	t=au+b&=p^{m}(p^{\nu_p(a)-m}a_pu+p^{\nu_p(b)-m}b_p)\\
	&=\begin{cases}
	p^{\nu_p(b)}(p^{\nu_p(a)-\nu_p(b)}a_pu+b_p), \nu_p(b)\leq \nu_p(a)\\
	p^{\nu_p(a)}(a_pu+p^{\nu_p(b)-\nu_p(a)} b_p), \nu_p(a)\leq \nu_p(b)
	\end{cases}
	\end{align*}
	and so we obtain $\nu_p(t)$ as follows:
	\begin{align*}
	\nu_p(t)=\begin{cases}
	\nu_p(b), &\nu_p(b)<\nu_p(a)\\
	\nu_p(a)+\nu_p(a_pu+p^{\nu_p(b)-\nu_p(a)} b_p), &\nu_p(a)\leq \nu_p(b).
	\end{cases}
	\end{align*}
\end{rem}

\subsubsection{Proof of Theorem \ref{thm: condition sab} for $p\geq 5$}

\begin{lemma}\label{lem: p5}
Let $s,a,b$ be such that $s=-3r^2$ where $r\in\Z$, and let $p\geq 5$ be a prime number such that $p\mid r$. Then the function $w_p^*(au+b)$ is constant as $u$ varies through $\Z$ if and only if one of the following holds:
\begin{enumerate}
    \item $\nu_p(b)<\nu_p(a)$
    \item $\nu_p(s)/2\leq\nu_p(a)\leq\nu_p(b)$.
\end{enumerate}
\end{lemma}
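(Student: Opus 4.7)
The proof relies on the explicit formula for $w_p^*(\mathcal{F}_s(t))$ provided by Proposition~\ref{prop: w_p^*(t)}. This formula expresses $w_p^*$ in terms of $p$-adic data from $t$ and $s$, principally the valuation $\nu_p(t^2-s)$ and its comparison with $\nu_p(s)$, together with residue information modulo $6$. Under the standing hypothesis $s=-3r^2$, the valuation $\nu_p(s)=2\nu_p(r)$ is even, and many case distinctions simplify using the fact that $s_p=-3r_p^2$.

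For the ``if'' direction, I would split into the two conditions. Under (1), $\nu_p(au+b)=\nu_p(b)$ for every $u\in\Z$ (since $\nu_p(au)\geq\nu_p(a)>\nu_p(b)$), pinning down $\nu_p(t)$ and, in generic cases, $\nu_p(t^2-s)$; hence $w_p^*$ is constant. The delicate subcase is the boundary $2\nu_p(b)=\nu_p(s)$, where writing $\tilde t:=t/p^{\nu_p(b)}$ gives $\tilde t\equiv b_p\pmod p$ independently of $u$; whether and to what order cancellation $\tilde t^2+3r_p^2\equiv 0\pmod{p^k}$ occurs is therefore $u$-independent at leading order, and the formula of Proposition~\ref{prop: w_p^*(t)} combined with the structure $s=-3r^2$ yields the same value of $w_p^*$ across all $u$. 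Under (2), the inequality $\nu_p(s)/2\leq\nu_p(a)\leq\nu_p(b)$ forces $2\nu_p(t)\geq\nu_p(s)$ for every $u$; when this is strict the formula directly gives a single value, and on the boundary $\nu_p(a)=\nu_p(s)/2$ one verifies via a similar factorization $t=p^{\nu_p(a)}\tilde t$ that both the subcase $p\nmid\tilde t$ (keeping $2\nu_p(t)=\nu_p(s)$) and $p\mid\tilde t$ (giving $2\nu_p(t)>\nu_p(s)$) produce the same $w_p^*$.

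For the ``only if'' direction, suppose neither (1) nor (2) holds, i.e.\ $\nu_p(a)\leq\nu_p(b)$ and $\nu_p(a)<\nu_p(s)/2$. I would exhibit two integers $u_0,u_1\in\Z$ with $\nu_p(au_0+b)=\nu_p(a)$ and $\nu_p(au_1+b)=\nu_p(a)+1$; such $u_i$ can be found regardless of whether $\nu_p(a)<\nu_p(b)$ (take $u_0$ a unit and $u_1\equiv 0\pmod p$) or $\nu_p(a)=\nu_p(b)$ (solve $a_pu_1+b_p\equiv 0\pmod p$ but $\not\equiv 0\pmod{p^2}$). Since both valuations fall strictly below $\nu_p(s)/2$, we have $\nu_p(t_i^2-s)=2\nu_p(t_i)$, so the two residues $\bmod 6$ differ by~$2$. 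Choosing $u_0,u_1$ so that these residues straddle the boundary between the two branches of the formula then yields $w_p^*(au_0+b)\neq w_p^*(au_1+b)$.

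The main obstacle will be carefully handling the boundary subcases on the ``if'' side, where cancellation in $t^2-s$ can cause $\nu_p(t^2-s)$ to jump as $u$ varies. The key ingredient is the form $s=-3r^2$: any such cancellation requires $-3$ to be a quadratic residue modulo $p$, forcing $p\equiv 1\pmod 3$ and hence $\left(\frac{-3}{p}\right)=1$. Together with the mod-$6$ case split in Proposition~\ref{prop: w_p^*(t)}, this shows that the potential sign contributions cancel and $w_p^*$ remains constant precisely in the two advertised regimes.
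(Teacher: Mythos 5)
Your ``if'' direction follows essentially the same route as the paper: specialize Proposition \ref{prop: w_p^*(t)} using $\nu_p(t)=\nu_p(b)$ and $t_p\equiv b_p\pmod p$ when $\nu_p(b)<\nu_p(a)$, respectively $2\nu_p(t)\geq\nu_p(s)$ when $\nu_p(s)/2\leq\nu_p(a)\leq\nu_p(b)$, and you correctly isolate the one nontrivial ingredient: on the boundary $2\nu_p(t)=\nu_p(s)$, any cancellation $p\mid t_p^2+3r_p^2$ forces $\left(\tfrac{-3}{p}\right)=1$, so every entry of the mod-$6$ case split collapses to a single value. One caveat: your phrase ``$u$-independent at leading order'' is not accurate --- $\nu_p(t^2-s)$ genuinely varies with $u$ on the boundary --- but this is harmless precisely because of the $\left(\tfrac{-3}{p}\right)=1$ collapse, so your conclusion stands.

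The ``only if'' direction has a genuine gap. In the regime $2\nu_p(t)-\nu_p(s)<0$ the table of Proposition \ref{prop: w_p^*(t)} does not branch on $\nu_p(t^2-s)\bmod 6$ at all (that branching occurs only when $2\nu_p(t)=\nu_p(s)$); it branches on the parity of $\nu_p(t)$, giving $-\left(\tfrac{3t_p}{p}\right)$ for $\nu_p(t)$ even and the fixed value $\left(\tfrac{-1}{p}\right)$ for $\nu_p(t)$ odd. So producing $u_0,u_1$ with $\nu_p(t_0)=\nu_p(a)$ and $\nu_p(t_1)=\nu_p(a)+1$ does not by itself separate the two values of $w_p^*$: you must additionally prescribe the unit part $t_p$ in the even-valuation case so that $-\left(\tfrac{3t_p}{p}\right)$ differs from the competing value, i.e.\ choose $t_p$ to be a quadratic residue or nonresidue according to $p\bmod 12$ (resp.\ $p\bmod 3$ when $\nu_p(s)\equiv 2\pmod 4$); this residue selection is exactly the extra step the paper's proof performs. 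Moreover $\nu_p(a)+1$ can equal $\nu_p(s)/2$, in which case $t_1$ lands on the boundary where the constant boundary value applies rather than the sub-boundary formula. That configuration is genuinely delicate: if $\nu_p(s)\equiv 0\pmod 4$ and $\nu_p(a)=\nu_p(s)/2-1$ (so $\nu_p(a)$ is odd), then since $\nu_p(t)\geq\nu_p(a)$ for all $t\in a\Z+b$, no even valuation strictly below $\nu_p(s)/2$ is attainable, and the only attainable values of $w_p^*$ are $\left(\tfrac{-1}{p}\right)$ and $1$, which coincide when $p\equiv1\pmod4$. Your argument breaks down there, and so, for what it is worth, does the paper's (it selects $\nu_p(t')=\nu_p(a)-1$, which is likewise unattainable); this boundary case needs separate treatment before the ``only if'' implication can be considered proved.
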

\begin{proof}
We refer to Proposition \ref{prop: w_p^*(t)} Table \ref{table: w_p^*(t)} for most of the proof. Note that we have $\nu_p(s)\equiv 0 \pmod 2, >0$ and if $p\mid t^2-s$, then $(\frac{-3}{p})=1$.
	\begin{enumerate}[wide=0pt,listparindent=1.5em]
	\item 
	If $\nu_p(b)<\nu_p(a)$, then $$t=au+b= p^{\nu_p(b)}(p^{\nu_p(a)-\nu_p(b)}a_pu+b_p)$$
	where $p\nmid p^{\nu_p(a)-\nu_p(b)}a_pu+b_p$, so $$\nu_p(t)=\nu_p(b) \text{ and } t_p\equiv b_p \pmod{p}.$$
	 If $\nu_p(s)\equiv 0 \pmod{4}$, then for case $2\nu_p(t)=\nu_p(s)$, we have $p\mid s$ and $p \mid t$ and in particular $p\mid t^2-s$. Then $(\frac{-3}{p})=1$ and so $w_p^*(t)=1$. If $\nu_p(s)\equiv 2 \pmod{4}$, then for case $2\nu_p(t)=\nu_p(s)$, again we have $p\mid t^2-s$, so $(\frac{3}{p})=(\frac{-3}{p})(\frac{-1}{p})=(\frac{-1}{p})$, and hence $w_p^*(t)=(\frac{-1}{p})$. So this part of Table \ref{table: w_p^*(t)} simplifies to
	    \begin{align*}
	        w_p^*(t)=\begin{cases}
	           \begin{cases}
	           -(\frac{3b_p}{p}), &\nu_p(b)\equiv 0 \pmod 2\\
	           (\frac{-1}{p}), &\nu_p(b)\equiv 1 \pmod 2
	           \end{cases}, &2\nu_p(b)-\nu_p(s)<0    \\       \begin{cases}
	           1, &\nu_p(s)\equiv 0 \pmod 4\\
	           (\frac{-1}{p}), &\nu_p(s)\equiv 2 \pmod 4
	           \end{cases}, &2\nu_p(b)-\nu_p(s)\geq0       
	        \end{cases}
	    \end{align*}
%
%
where we see that $w_p^*(t)$ does not depend on $t$ and hence is constant.\\
		
	  \item 
	  If $\nu_p(a)\leq \nu_p(b)$ and $\nu_p(s)/2 \leq \nu_p(a)$, then firstly $\nu_p(a)\leq \nu_p(b)$ means 					$$t=au+b= p^{\nu_p(a)}(a_pu+p^{\nu_p(b)-\nu_p(a)}b_p)$$
		and so $\nu_p(t)\geq \nu_p(a)$, since $p$ could divide $a_pu+p^{\nu_p(b)-\nu_p(a)}b_p$ depending on whether $p\mid u$. So we have $\nu_p(s)/2\leq \nu_p(a)\leq \nu_p(t)$, and hence $2\nu_p(b)-\nu_p(s)=2\nu_p(t)-\nu_p(s)\geq 0$. 
		The rest of the argument follows analogously to the one above, where now Table \ref{table: w_p^*(t)} simplifies to 
		    \begin{align*}
		        w_p^*(t) = \begin{cases}
		        1, &\nu_p(s)\equiv 0 \pmod 4\\
		        (\frac{-1}{p}), &\nu_p(s)\equiv 2 \pmod 4.
		        \end{cases}
		    \end{align*}
		Again we see that $w_p^*(t)$ does not depend on $t$ and hence is constant.\\
			
\item If $\nu_p(a)\leq \nu_p(b)$ and $\nu_p(a)<\nu_p(s)/2$, then $2\nu_p(a)<\nu_p(s)$ and again $\nu_p(a)\leq \nu_p(b)$ implies $2\nu_p(a)\leq 2\nu_p(t)$. We find $t, t'$ with different local root numbers at $p$. 		
\begin{itemize}
 			\item    
 If $\nu_p(s)\equiv 0 \pmod 4$, first choose $t$ such that $\nu_p(t)>\nu_p(s)/2$. Then $2\nu_p(t)-\nu_p(s)>0$, and so $w_p^*(t)=1$. Now we choose $t'$ as follows:
			\begin{align*}
			    t'=
			    \begin{cases}
			    \begin{cases}
			    p^{\nu_p(a)}\alpha, &p\equiv \pm 1 \pmod {12} \\
			    p^{\nu_p(a)}\beta, &p\equiv \pm 5 \pmod {12} \\
			    \end{cases}, &\nu_p(a)\equiv 0 \pmod 2\\
			    \begin{cases}
			   	    p^{\nu_p(a)-1}\alpha, &p\equiv \pm 1 \pmod {12}\\
			        p^{\nu_p(a)-1}\beta, &p\equiv \pm 5 \pmod {12}\\
			    \end{cases}, &\nu_p(a)\equiv 1 \pmod 2\\
			    \end{cases}
			\end{align*}
			where $\alpha,\beta\in\Z$ are both not divisible by $p$ and such that $\alpha$ is a quadratic residue mod $p$ while $\beta$ is a quadratic nonresidue mod $p$. We claim that these choices of $t'$ give $w_p^*(t)=-1$.
			If $\nu_p(a)\equiv 0 \pmod2$, then choosing $\nu_p(t')=\nu_p(a)$ means $2\nu_p(t')-\nu_p(s)<0$ and $\nu_p(t')\equiv 0 \pmod 2$ which imply $w_p^*(t')=-(\frac{3t'_p}{p})$. If $\nu_p(a)\equiv 1 \pmod2$, then choosing $\nu_p(t')=\nu_p(a)-1$ again gives $w_p^*(t')=-(\frac{3t'_p}{p})$. In both cases we have $w_p^*(t')=-(\frac{3t'_p}{p})$, and we want this to be $-1$. Note that 
			$$-\left(\frac{3t'_p}{p}\right)=-\left(\frac{3}{p}\right)\left(\frac{t'_p}{p}\right)=\begin{cases}
			-\left(\frac{t'_p}{p}\right), \ &p\equiv \pm 1 \pmod{12}\\
			\left(\frac{t'_p}{p}\right), \ &p \equiv \pm 5 \pmod{12}
			\end{cases}$$
			so choose $t'_p$ to be a quadratic residue or quadratic nonresidue mod $p$ according to $p$ mod $12$.

			\item If $\nu_p(s)\equiv 2 \pmod 4$, the process is quite similar. First choose $t$ such that $\nu_p(t)>\nu_p(s)/2$, so $w_p^*(t)=(\frac{-1}{p})$. Now choose $t'$ as follows:
            \begin{align*}
			    t'=
			    \begin{cases}
			    \begin{cases}
			    p^{\nu_p(a)}\alpha, &\text{if }p\equiv 1 \pmod 3 \\
			    p^{\nu_p(a)}\beta, &\text{if }p\equiv 2 \pmod 3 \\
			    \end{cases}, &\text{and }\nu_p(a)\equiv 0 \pmod 2\\
			    \begin{cases}
			   	  p^{\nu_p(a)-1}\alpha, &\text{if }p\equiv 1 \pmod 3\\
			      p^{\nu_p(a)-1}\beta, &\text{if }p\equiv 2 \pmod 3
			    \end{cases}, &\text{and }\nu_p(a)\equiv 1 \pmod 2
			    \end{cases}
			\end{align*}
			where again $\alpha,\beta\in\Z$ are both not divisible by $p$ and such that $\alpha$ is a quadratic residue mod $p$ while $\beta$ is a quadratic nonresidue mod $p$.
			We claim that $w_p^*(t')=-(\frac{-1}{p})$. If $\nu_p(a)\equiv 0 \pmod2$, then choosing $\nu_p(t')=\nu_p(a)$ means $2\nu_p(t')-\nu_p(s)<0$ and $\nu_p(t')\equiv 0 \pmod 2$ which imply $w_p^*(t')=-(\frac{3t'_p}{p})$. If $\nu_p(a)\equiv 1 \pmod2$, then choosing $\nu_p(t')=v_p(a)-1$ again gives $w_p^*(t')=-(\frac{3t'_p}{p})$. Note that
			$-(\frac{3t'_p}{p})=-(\frac{-1}{p})(\frac{-3}{p})(\frac{t'_p}{p})$	
			and we want $(\frac{-3}{p})(\frac{t'_p}{p})=1$. So choose $t'_p$ to be a quadratic residue or quadratic nonresidue mod $p$ according to $p$ mod $3$. \qedhere 
					\end{itemize}
		\end{enumerate}
		\end{proof}
		
\subsubsection{Proof of Theorem for $p=3$}\label{sec:mainthm_pf_p=3}

\begin{lemma}\label{lem: p=3}
Let $s,a,b\in\Z$ be such that $s=-3r^2$ where $r\in\Z$. Then the function $w_3^*(au+b)$ is constant as $u$ varies through $\Z$ if and only if one of the following holds:
\begin{enumerate}
    \item $\nu_3(b)<\nu_3(a)$ or
    \item $(\nu_3(s)-3)/2\leq\nu_3(a)\leq\nu_3(b)$.
\end{enumerate}
\end{lemma}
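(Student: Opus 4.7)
The plan is to mirror the structure of the proof of Lemma \ref{lem: p5}, replacing Proposition \ref{prop: w_p^*(t)} with Proposition \ref{prop: w_3^*(t)} to evaluate $w_3^*$. A key preliminary observation is that $s=-3r^2$ forces $\nu_3(s)=1+2\nu_3(r)$, so $\nu_3(s)$ is odd and $2\nu_3(t)-\nu_3(s)$ is always odd. This explains the appearance of the threshold $(\nu_3(s)-3)/2$ rather than $\nu_3(s)/2$ as in the $p\geq 5$ case: the smallest odd value of $2\nu_3(t)-\nu_3(s)$ for which the formula becomes insensitive to the unit part of $t$ is $-3$, corresponding exactly to $\nu_3(t)\geq(\nu_3(s)-3)/2$.

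For the sufficient direction I would split into the two cases of the hypothesis. In case (1), $\nu_3(b)<\nu_3(a)$ implies $\nu_3(au+b)=\nu_3(b)$ and $(au+b)_3\equiv b_3\pmod 3$ for every $u\in\Z$; since the only $t$-dependent inputs to Proposition \ref{prop: w_3^*(t)} in the relevant branch are $\nu_3(t)$ and $t_3$ modulo a fixed small power of $3$, $w_3^*(au+b)$ is determined by $s$ and $b$ alone. In case (2), the inequality $\nu_3(a)\leq\nu_3(b)$ gives $\nu_3(t)\geq\nu_3(a)\geq(\nu_3(s)-3)/2$, so $2\nu_3(t)-\nu_3(s)\geq -3$; inspecting Proposition \ref{prop: w_3^*(t)} shows that in this regime, the formula collapses to an expression depending only on $\nu_3(s)\bmod 4$ (or similar), yielding the same value regardless of $u$.

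For the necessary direction, assume that $\nu_3(a)\leq\nu_3(b)$ and $\nu_3(a)<(\nu_3(s)-3)/2$. Then as $u$ varies through $\Z$, the value $\nu_3(au+b)$ can be arranged to be exactly $\nu_3(a)$ with $(au+b)_3$ prescribable modulo $3$ freely (by choosing $u$ with $u\not\equiv 0\pmod 3$ and adjusting its residue class), and can also be made strictly larger than $\nu_3(a)$ (by choosing $u\equiv -b_3\cdot a_3^{-1}\pmod 3$ when $\nu_3(a)=\nu_3(b)$, or by a similar maneuver). I would then select $t,t'\in a\Z+b$ landing in different branches of Proposition \ref{prop: w_3^*(t)}: typically one with $\nu_3(t)=\nu_3(a)$ and $t_3$ chosen so that a Legendre-type symbol $\left(\tfrac{t_3}{3}\right)$ (or the parity of $\nu_3(t^2-s)$) changes sign, and another with $\nu_3(t')>\nu_3(a)$ pushing us into the stable regime. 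Sub-case analysis on $\nu_3(a)$ (odd vs.\ even) and on $\nu_3(s)\bmod 4$, paralleling Lemma \ref{lem: p5}, produces the required discrepancy $w_3^*(t)\neq w_3^*(t')$.

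The main obstacle will be the genuine intricacy of the formula at $p=3$: Proposition \ref{prop: w_3^*(t)} involves residues modulo $9$ (and possibly higher) and branches more finely than its counterpart for $p\geq 5$, so the case analysis for constructing $t,t'$ with differing $w_3^*$ must be performed with care to ensure that the chosen residues are simultaneously achievable within the arithmetic progression $a\Z+b$ — in particular, that the hypothesis $\nu_3(a)<(\nu_3(s)-3)/2$ leaves enough 3-adic slack to realize both branches.
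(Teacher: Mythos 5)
Your proposal is correct and takes essentially the same route as the paper: both rely on Table \ref{table: w_3^*(t),v_3(s)odd} (the only relevant table since $s=-3r^2$ forces $\nu_3(s)$ odd), prove sufficiency by substituting $\nu_3(b), b_3$ when $\nu_3(b)<\nu_3(a)$ and by observing stabilization once $\nu_3(s)-2\nu_3(t)\leq 3$, and prove necessity in the remaining case $\nu_3(a)\leq\nu_3(b)$, $\nu_3(a)<(\nu_3(s)-3)/2$ by producing one fibre deep in the stable regime and another with $\nu_3(t')\in\{\nu_3(a),\nu_3(a)+1\}$ and prescribed $t'_3\bmod 3$. Two minor remarks: your concern about residues modulo $9$ is moot because the odd-$\nu_3(s)$ table only involves $(\frac{t_3}{3})$, and the constancy in your case (2) additionally uses $s_3\equiv 2\pmod 3$ so that the rows with value $\pm(\frac{s_3}{3})$ agree with the neighbouring constant rows.
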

\begin{proof}
We have $\nu_3(s)\equiv 1 \pmod 2$, so we refer to Proposition \ref{prop: w_3^*(t)} Table \ref{table: w_3^*(t),v_3(s)odd}.
		\begin{enumerate}[wide=0pt]
		\item If $\nu_3(b)<\nu_3(a)$, then as above,
	$$t=au+b= 3^{\nu_3(b)}(3^{\nu_3(a)-\nu_3(b)}a_3u+b_3)$$
	where $3\nmid 3^{\nu_3(a)-\nu_3(b)}a_3u+b_3$, so 
	$\nu_3(t)=\nu_3(b) \text{ and } t_3\equiv b_3 \pmod{3}$.
Replacing $\nu_3(t)$ by $\nu_3(b)$ and $t_3$ by $b_3$ in Table \ref{table: w_3^*(t),v_3(s)odd}, we get
		\begin{table}[H]
		\centering
		\caption{} \label{table:p=3_rank}
		\begin{tabular}{l l r} 
		\hline
		$\nu_3(s)$	&$\nu_3(s)-2\nu_3(b)$	&$w_3^*(t)$\\
		\hline
		$1\pmod 4$	&$-1$						&$-(\frac{s_3}{3})$\\
			&$1\pmod{4}, >1$		&$-1$\\
			&$3\pmod{4}, >3$	&$-(\frac{b_3}{3})$\\
			&otherwise  &1\\
		$3\pmod 4$	&$1$							&$(\frac{s_3}{3})$\\
			&$1\pmod{4}, >1$		&$-(\frac{b_3}{3})$\\
			&otherwise				&$-1$\\
			\hline
		\end{tabular}
		\end{table}
		Then $w_3^*(t)$ is independent of $t$ and hence is constant given $s, a, b$.\\
	 
		\item If $\nu_3(a)\leq \nu_3(b)$ and $(\nu_3(s)-3)/2 \leq \nu_3(a)$, then
		$$t=au+b= 3^{\nu_3(a)}(a_3u+3^{\nu_3(b)-\nu_3(a)}b_3)$$
		and so $\nu_3(t)\geq \nu_3(a)$, since 3 could divide $a_3u+3^{\nu_3(b)-\nu_3(a)}b_3$ depending on whether $3\mid u$.
		Then $(\nu_3(s)-3)/2 \leq \nu_3(a) \leq \nu_3(t)$ and so $\nu_3(s)-2\nu_3(t)\leq 3$.
		By Table \ref{table: w_3^*(t),v_3(s)odd}, we see that $w_3^*(t)$ is independent of $t$ for both $\nu_3(s)\equiv 1 \pmod 4$ and $\nu_3(s)\equiv 3 \pmod 4$.		\\
		\item If $\nu_3(a)\leq \nu_3(b)$ and $\nu_3(a)<(\nu_3(s)-3)/2$, we find $t, t'$ with different root numbers at 3. The first assumption implies $\nu_3(t)\geq \nu_3(a)$ and so $2\nu_3(a)+3 \leq 2\nu_3(t)+3$. The second assumption is $2\nu_3(a)+3<\nu_3(s)$.

	\begin{itemize}
		\item If $\nu_3(s)\equiv 1 \pmod 4$, first choose $t$ such that $\nu_3(t)>(\nu_3(s)-3)/2$. Then $\nu_3(s)-2\nu_3(t)<3$ and so $w_3^*(t)=1$. Note that if $\nu_3(s)-2\nu_3(t)=-1$, then $w_3^*(t)=-(\frac{s_3}{3})=1$, since $s_3=-r_3^2\equiv -1\pmod{3}$. Now choose $t'$ such that
		    \begin{align*}
		        t'=3^{\nu_3(a)}\alpha
		    \end{align*}
		for some $\alpha\equiv 1 \pmod 3$. In other words
		$\nu_3(t')=\nu_3(a)$ and $t'_3\equiv 1 \pmod{3}$. Then $2\nu_3(t')+3=2\nu_3(a)+3< \nu_3(s)$, so $\nu_3(s)-2\nu_3(t')> 3$. If $\nu_3(s)-2\nu_3(t')\equiv 1 \pmod{4}$, then $w_3^*(t)=-1$, and if $\nu_3(s)-2\nu_3(t')\equiv 3 \pmod 4$, then $w_3^*(t) = -(\frac{t'_3}{3})=-1$.
			
		\item If $\nu_3(s)\equiv 3 \pmod 4$, again choose $t$ such that $\nu_3(t)>(\nu_3(s)-3)/2$. Then $\nu_3(s)-2\nu_3(t)<3$ and so $w_3^*(t)=-1$. Now choose $t'$ as follows:
		    \begin{align*}
		        t'=\begin{cases}
		        3^{\nu_3(a)}\beta, &\nu_3(s)-2\nu_3(a)\equiv 1 \pmod 4\\
		        3^{\nu_3(a)+1}\beta, &\nu_3(s)-2\nu_3(a)\equiv 3 \pmod 4
		        \end{cases}
		    \end{align*}
		    for some $\beta\equiv 2 \pmod 3$. 
		    We refer to Table \ref{table: w_3^*(t),v_3(s)odd} to deduce that $w_p^*(t')=1$. \qedhere
        \end{itemize}	
		\end{enumerate}
\end{proof}		
\subsubsection{Proof of Theorem \ref{thm: condition sab} for $p=2$}

\begin{lemma}\label{lem: p=2}
Let $s,a,b$ be such that $s=-3r^2$ where $r\in\Z$. Then the function $w_p^*(au+b)$ is constant as $u$ varies through $\Z$ if and only if the triple $(s,a,b)$ respects the $2$-adic data of Table \ref{table1}.
\end{lemma}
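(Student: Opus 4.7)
The proof strategy mirrors that of Lemmas \ref{lem: p5} and \ref{lem: p=3}, but with the substantially more intricate 2-adic casework of Proposition \ref{prop: w_2^*(t)}. I would organize the argument by the relationship between $\nu_2(a)$ and $\nu_2(b)$, using the identity
$$t = au + b = 2^{m}\bigl(2^{\nu_2(a)-m} a_2 u + 2^{\nu_2(b)-m} b_2\bigr), \qquad m := \min(\nu_2(a), \nu_2(b)),$$
to read off $\nu_2(t)$ and the relevant residue $t_2 \bmod 2^k$ (for $k \leq 3$) as $u$ varies. These are precisely the inputs fed into the tables of Proposition \ref{prop: w_2^*(t)}, so proving constancy of $w_2^*(au+b)$ reduces to checking which of those inputs actually move with $u$.

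For the "if" direction, I would verify each row of Table \ref{table1} directly. When $\nu_2(a) - \nu_2(b) > 2$, we have $\nu_2(t) = \nu_2(b)$ and $t_2 \equiv b_2 \pmod 8$, so every entry of Proposition \ref{prop: w_2^*(t)} depends only on $s$, $\nu_2(b)$ and $b_2 \bmod 8$, which explains why no further $s$-condition appears in that row. When $\nu_2(a) - \nu_2(b) = 2$, we still have $\nu_2(t) = \nu_2(b)$ but only $t_2 \equiv b_2 \pmod 4$; the remaining columns then pick out exactly those ranges of $\nu_2(s) - 2\nu_2(b)$ and $\nu_2(s) \bmod 4$ for which Proposition \ref{prop: w_2^*(t)} depends on $t_2$ only through $t_2 \bmod 4$, together with the refinements on $b_2 \bmod 4$ in cases where the output is an explicit Jacobi symbol involving $t_2$. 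Finally, when $\nu_2(a) - \nu_2(b) \leq 1$, $\nu_2(t)$ itself varies with $u$, and constancy forces $2\nu_2(t) - \nu_2(s)$ into a regime where Proposition \ref{prop: w_2^*(t)} becomes insensitive both to $\nu_2(t)$ and to $t_2 \bmod 8$; this is what the bounds $\nu_2(s) - 2\nu_2(b) \leq -6$, respectively $= -4$ with $\nu_2(s) \equiv 2 \pmod 4$, encode.

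For the "only if" direction, for each triple $(s, a, b)$ not in Table \ref{table1} I would exhibit $u, u' \in \Z$ with $w_2^*(au+b) \neq w_2^*(au'+b)$. Following the template of Lemma \ref{constant local}, I would consult Proposition \ref{prop: w_2^*(t)} to locate two residue classes of $t$ modulo a suitable $2^N$ yielding opposite local root numbers, and then solve the congruence $au+b \equiv t \pmod{2^N}$ for $u$; the hypothesis that the triple violates Table \ref{table1} is precisely what guarantees that both target residue classes are attainable inside the progression $a\Z + b$. A systematic split by $\nu_2(a) - \nu_2(b)$ is again natural here: e.g.\ if $\nu_2(a) - \nu_2(b) = 2$ and the $\nu_2(s) \pmod 4$, $\nu_2(s) - 2\nu_2(b)$ entries sit outside the table, I can flip $t_2 \bmod 8$ by toggling $u \bmod 2$ while keeping $\nu_2(t)$ and $t_2 \bmod 4$ fixed.

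The chief obstacle is bookkeeping rather than ideas: Proposition \ref{prop: w_2^*(t)} contains a large collection of subcases indexed by $\nu_2(s) \bmod 4$, $\nu_2(s) - 2\nu_2(t)$ (both its sign and its residue modulo $4$), $s_2 \bmod 16$, and $t_2 \bmod 8$, and Table \ref{table1} records the exact transition points between "constant" and "non-constant" behavior. Structuring the verification first by $\nu_2(a) - \nu_2(b)$ and then by $\nu_2(s) \bmod 4$ should reduce the argument to a manageable list of parallel checks, each one essentially a direct reading of the appropriate row of the proposition's tables against the corresponding row of Table \ref{table1}.
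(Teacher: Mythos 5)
Your overall organization is the same as the paper's: split by $\nu_2(a)-\nu_2(b)$, read off $\nu_2(t)$ and $t_2$ modulo a power of $2$ from $t=au+b$, feed these into Proposition \ref{prop: w_2^*(t)} for the ``if'' direction, and exhibit two fibres with opposite $w_2^*$ for the ``only if'' direction. The weakness is in your premise that the inputs to Proposition \ref{prop: w_2^*(t)} are only $\nu_2(s)\bmod 4$, the sign and residue class of $\nu_2(s)-2\nu_2(t)$, $s_2\bmod 16$ and $t_2\bmod 8$, so that constancy ``reduces to checking which of those inputs actually move with $u$.'' That is false precisely on the diagonal $2\nu_2(t)=\nu_2(s)$, where the relevant entries of the proposition (Tables \ref{table: w_2(t) for 2v_2(t)=v(s),v_2(t)even} and \ref{table: w_2(t) for 2v_2(t)=v(s),v_2(t)odd}) are indexed instead by $\nu_2(t^2-s)-2\nu_2(t)$ and $(t^2-s)_2$, quantities that are \emph{not} determined by $\nu_2(t)$ and $t_2\bmod 8$. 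This diagonal is genuinely reached (e.g.\ whenever $\nu_2(b)<\nu_2(a)$ and $2\nu_2(b)=\nu_2(s)$), and it is where several rows of Table \ref{table1} come from.

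The paper closes this case with a computation your outline omits and which uses the hypothesis $s=-3r^2$ in an essential way: since $s_2=-3r_2^2\equiv 5\pmod 8$ and $b_2^2\equiv 1\pmod 8$, one gets $b_2^2-s_2\equiv 4\pmod 8$, hence $\nu_2(t^2-s)-2\nu_2(t)=2$ exactly, and the question becomes whether $(t^2-s)_2$ is constant modulo $4$ (resp.\ modulo $8$) as $u$ varies. When $\nu_2(a)-\nu_2(b)\geq 2$ the $u$-dependent terms of $(t^2-s)_2$ are divisible by $4$, so it is constant; when $\nu_2(a)-\nu_2(b)=1$ it is not, and the paper produces the non-constant examples by explicit choices of $u\bmod 4$ and $u\bmod 8$ (splitting further on the parity of $\nu_2(b)$, which decides which of the two diagonal tables applies). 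Without this step your plan cannot evaluate $w_2^*$ at all in the rows of Table \ref{table1} corresponding to $2\nu_2(b)=\nu_2(s)$, so you should add it; the rest of your argument matches the paper's.
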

\begin{proof}
For $p=2$, we have $\nu_2(s)\equiv 0 \pmod 2$, so we refer to Proposition \ref{prop: w_2^*(t)} Tables \ref{table: w_2^*(t),v_2(s)0mod4}, \ref{table: w_2(t) for 2v_2(t)=v(s),v_2(t)even}, \ref{table: w_2^*(t),v_2(s)2mod4} and \ref{table: w_2(t) for 2v_2(t)=v(s),v_2(t)odd} for most of the proof.
		\begin{enumerate}[wide=0pt]
		\item If $\nu_2(b)+2<\nu_2(a)$, then $$t=au+b= 2^{\nu_2(b)}(2^{\nu_2(a)-\nu_2(b)}a_2u+b_2)$$
	where $2\nmid 2^{\nu_2(a)-\nu_2(b)}a_2u+b_2$ and so $\nu_2(t)=\nu_2(b) \text{ and } t_2\equiv b_2\pmod{8}$.
	For the case $2\nu_2(b)\neq \nu_2(s)$, refer to Proposition \ref{prop: w_2^*(t)} Tables \ref{table: w_2^*(t),v_2(s)0mod4} and \ref{table: w_2^*(t),v_2(s)2mod4} for $\nu_2(s)\equiv 0, 2 \pmod{4}$ and replace $\nu_2(t)$ by $\nu_2(b)$ and $t_2$ by $b_2$ to get the roots numbers $w_2^*(t)$ which do not depend on $t$. For the case $2\nu_2(b)=\nu_2(s)$, we have
	\begin{align*}
	t^2-s
	&=2^{2\nu_2(t)}(t_2^2-s_2)=2^{2\nu_2(t)}(2^{2(\nu_2(a)-\nu_2(b))}a_2^2u^2+2^{\nu_2(a)-\nu_2(b)+1}a_2b_2u+b_2^2-s_2).
	\end{align*}
	Note that $2(\nu_2(a)-\nu_2(b))\geq 6$ and $\nu_2(a)-\nu_2(b)+1\geq 4$. Further we have $b_2^2\equiv 1 \pmod 8$ and $s_2 = -3r_2^2 \equiv 5 \pmod 8$ and so $b_2^2-s_2\equiv 4 \pmod 8$ which means $\nu_2(b_2^2-s_2)=2$.  
	Then
	\begin{align}\label{eqn:p=2_case_1}
	t^2-s&=2^{2\nu_2(t)+2}(2^{2(\nu_2(a)-\nu_2(b))-2}a_2^2u^2+2^{\nu_2(a)-\nu_2(b)-1}a_2b_2u+(b_2^2-s_2)_2)
	\end{align}
	so that $\nu_2(t^2-s)-2\nu_2(t)=2$. If $\nu_2(t)$ even, then by Table \ref{table: w_2(t) for 2v_2(t)=v(s),v_2(t)even} we have
	$$w_2^*(t)=\begin{cases}
	1, \ &t_2(t^2-s)_2\equiv 3\pmod{4}\\
	-1, \ &\text{otherwise}
	\end{cases}$$
	and if $\nu_2(t)$ odd, then by Table \ref{table: w_2(t) for 2v_2(t)=v(s),v_2(t)odd} we have
	$$w_2^*(t)=\begin{cases}
	1,  \ &t_2\equiv 1\pmod{4}\\
	& t_2\equiv 3\pmod{4}\text{ and }(t^2-s)_2\equiv3\pmod{8}\\
	-1, \ &\text{otherwise.}
	\end{cases}$$
	In the above expression \eqref{eqn:p=2_case_1}, note that $2(\nu_2(a)-\nu_2(b)-2)\geq 2$ and $\nu_2(a)-\nu_2(b)-1\geq 2$, so $(t^2-s)_2\equiv (b_2^2-s_2)_2 \pmod 4$. 	These formulas thus do not depend on $t$. This argument works for $\nu_2(b)+2=\nu_2(a)$ as well.\\

	\item If $\nu_2(b)+2=\nu_2(a)$, then $$t=au+b= 2^{\nu_2(b)}(2^2a_2u+b_2)$$
	and so $\nu_2(t)=\nu_2(b) \text{ and } t_2\equiv b_2\pmod{4}$. If $2\nu_2(b)= \nu_2(s)$, then we use the the same argument as above. If $2\nu_2(b)\neq \nu_2(s)$, then we consider the following.
			\begin{itemize}
			\item If $\nu_2(s)\equiv 0 \pmod 4$ we refer to Table \ref{table: w_2^*(t),v_2(s)0mod4}. If $\nu_2(s)-2\nu_2(b)<0$ or $\nu_2(s)-2\nu_2(b)\equiv 2 \pmod4$, then $w_2^*(t)$ is constant by replacing $\nu_2(t)$ by $\nu_2(b)$ and $t_2$ by $b_2$. Now suppose $\nu_2(s)-2\nu_2(b)\equiv 0 \pmod 4, >0$. If $b_2\equiv 1 \pmod 4$, then we have $t_2\equiv 1$ or $5 \pmod{8}$ and so 
			    \begin{align}\label{eqn:p=2_case_2}
			        w_2^*(t)=\begin{cases}
			            1,& \nu_2(s)-2\nu_2(b)=4\\
			            -1,& \nu_2(s)-2\nu_2(b)\equiv 0 \pmod 4, >4.
			        \end{cases}
			    \end{align}    
			 If $b_2\equiv 3 \pmod 4$, then choose $t$ and $t'$ such that $t_2\equiv 3 \pmod 8$ and $t'_2\equiv 7 \pmod 8$. Then $w_2^*(t)$ is as \eqref{eqn:p=2_case_2} while $w_2^*(t')=-w_2^*(t)$.

			\item If $\nu_2(s)\equiv 2 \pmod 4$ we refer to Table \ref{table: w_2^*(t),v_2(s)2mod4}. 
				\begin{itemize}

				\item    If $\nu_2(s)-2\nu_2(b)=2$, then if $b_2\equiv 3 \pmod 4$, we have $t_2\equiv 3, 7 \pmod{8}$ and so $w_2^*(t)=1$. But if $b_2\equiv 1 \pmod{4}$, we can choose $t$ and $t'$ such that $t_2\equiv 5 \pmod 8$ and $t'_2\equiv 1 \pmod 8$. Then for $s_2\equiv 1 \pmod 8$, $w_2^*(t)=1$ and $w_2^*(t')=-1$. And for $s_2\equiv 5 \pmod 8$, $w_2^*(t)=-1$ and $w_2^*(t')=1$.
				
				\item    If $\nu_2(s)-2\nu_2(b)\equiv 2 \pmod 4, >6$, then if $b_2\equiv 1 \pmod 4$, we get $t_2\equiv 1, 5 \pmod 8$ and so $w_2^*(t)=-1$. If $b_2\equiv 3 \pmod 4$, then choose $t$ and $t'$ such that $t_2\equiv 7 \pmod 8$ and $t'_2\equiv 3 \pmod 8$. Then $w_2^*(t)=1$ and $w_2^*(t')=-1$.\\
				\end{itemize}							
	\end{itemize}
	
	\item If $\nu_2(b)+1=\nu_2(a)$, then 
		\begin{align*}
	        t=au+b=2^{\nu_2(b)}(2a_2u+b_2)
        \end{align*}
	so $\nu_2(t)=\nu_2(b)$ and $t_2\equiv b_2\pmod 2$.
		For $\nu_2(s)-2\nu_2(b)\leq -4$, $w_2^*(t)$ is constant, again by replacing $\nu_2(t)$ by $\nu_2(b)$ in Tables \ref{table: w_2^*(t),v_2(s)0mod4} and \ref{table: w_2^*(t),v_2(s)2mod4}. Now suppose $\nu_2(s)-2\nu_2(b)>-4$.
			\begin{itemize}
			\item    If $\nu_2(s)\equiv 0 \pmod 4$, then choose the following $t$ and $t'$ to get different local root numbers:
				\begin{center}
				\begin{tabular}{l l l l r r}
				\hline
				$\nu_2(s)-2\nu_2(b)$	&$s_2$&$t_2$	&$t'_2$	&$w_2^*(t)$	&$w_2^*(t')$\\
				\hline
				$-2$						&$1, 13 \pmod{16}$	&$3\pmod 4$		&$1\pmod 4$			&$-1$	&$1$\\
											&$5, 9 \pmod{16}$	& $3\pmod 4$		&$1\pmod 4$			&$1$		&$-1$\\
				2							&								&$1\pmod{4}$	&$3\pmod{4}$	&$1$		&$-1$\\
				$2\pmod{4}, >2$	&								&$3\pmod{4}$		&$1\pmod{4}$			&$1$		&$-1$\\
				4							& 								&$3\pmod{8}$		&$7\pmod{8}$		&$1$		&$-1$\\
				$0\pmod{4}, >4$	&								&$3\pmod{8}$		&$7\pmod{8}$		&$-1$	&$1$\\
				\hline
				\end{tabular}
				\end{center}
				
			\item    If $\nu_2(s)\equiv 2 \pmod 4$, then for $\nu_2(s)-2\nu_2(b)=-2$, we have $w_2^*(t)=-1$ since $s_2\equiv 1 \pmod 4$. For the remaining except $\nu_2(s)-2\nu_2(b)=0$, choose the following $t$ and $t'$ to get different local root numbers:
				\begin{center}
				\begin{tabular}{l l l r r}
				\hline
				$\nu_2(s)-2\nu_2(b)$	&$t_2$	&$t'_2$	&$w_2^*(t)$	&$w_2^*(t')$\\
				\hline
				$0\pmod{4}, >0$						&$3\pmod{4}$	&$1\pmod{4}$	&$1$		&$-1$\\
				2						
					&$1\pmod{4}$	&$3\pmod{4}$	&$1$		&$-1$\\
				6 						&$3\pmod{4}$	&$1\pmod{4}$	&$1$		&$-1$\\
				$2\pmod 4, >6$			&$7\pmod 8$	&$1\pmod 8$				&1			&$-1$\\
				\hline
				\end{tabular}
				\end{center}

	 Finally if $2\nu_2(b)=\nu_2(s)$, then $2\nu_2(t)=\nu_2(s)$ and
	\begin{align*}
	t^2-s&=2^{2\nu_2(t)}(t_2^2-s_2)\\
	&=2^{2\nu_2(t)}(2^{2}a_2^2u^2+2^{2}a_2b_2u+b_2^2-s_2)
	\end{align*}
	Again we have $b_2^2\equiv 1 \pmod 8$ and $s_2=-3r_2^2\equiv 5 \pmod 8$, so $b_2^2-s_2\equiv 4 \pmod 8$ and hence $\nu_2(b_2^2-s_2)=2$. Then
	\begin{align*}
	t^2-s&=2^{2\nu_2(t)+2}(a_2^2u^2+a_2b_2u+(b_2^2-s_2)_2).
	\end{align*}
	
	By choosing $u$ such that $\nu_2(u)=1$, we have 
$$\nu_2(t^2-s)-2\nu_2(t)=2$$ and 
$$(t^2-s)_2=a_2^2u^2+a_2b_2u+(b_2^2-s_2)_2.$$
Then for the case $\nu_2(b)$ even, using Table \ref{table: w_2(t) for 2v_2(t)=v(s),v_2(t)even} and $t_2=2a_2u+b_2$, we compute $$t_2(t^2-s)_2=(2a_2u+b_2)(a_2^2u^2+a_2b_2u+(b_2^2-s_2)_2).$$ If $u$ is even, we have 
	\begin{align*}
	 t_2(t^2-s)_2&\equiv b_2(a_2b_2u+(b_2^2-s_2)_2) \pmod 4\\
	 &\equiv a_2u+b_2(b_2^2-s_2)_2 \pmod 4
	 \end{align*}	
	 where $b_2(b_2^2-s_2)_2\equiv \pm 1 \pmod 4$ and $a_2u\equiv 0$ or $2 \pmod 4$.
	 So we can choose $u\equiv 0 \pmod 4$ and $u'\equiv 2 \pmod 4$ so that $t_2(t^2-s)_2$ and $t'_2(t'^2-s)_2$ have different signs mod 4 and hence different root numbers according to Table \ref{table: w_2(t) for 2v_2(t)=v(s),v_2(t)even}.\\
	 
	 On the other hand, if $\nu_2(b)$ is odd, then we refer to Table \ref{table: w_2(t) for 2v_2(t)=v(s),v_2(t)odd}. Choose $u, u'$ such that
	\begin{align*}
	 u\equiv \frac{(1-b_2)}{2a_2}\pmod 4 \ \text{and } u'\equiv \frac{(-1-b_2)}{2a_2}\pmod 8
\end{align*}

Then $2u\equiv a_2^{-1}(1-b_2)\pmod 4$ and so $t_2=2a_2u+b_2\equiv 1 \pmod 4$, and so $w_2^*(t)=1$. Similarly, $2u'\equiv a_2^{-1}(-1-b_2)\pmod 8$, and so $t_2'=2a_2u'+b_2\equiv -1 \equiv 7 \pmod 8$, and so $w_2^*(t')=-1$.\\
\end{itemize}
		\item If $\nu_2(a)\leq \nu_2(b)$ and $(\nu_2(s)+6)/2 \leq \nu_2(a)$, then 
				$$t=au+b= 2^{\nu_2(a)}(a_2u+2^{\nu_2(b)-\nu_2(a)}b_2)$$
		so that $\nu_2(t)\geq \nu_2(a)$. Then $\nu_2(s)+6\leq 2\nu_2(a)\leq 2\nu_2(t)$ and $\nu_2(s)-2\nu_2(t)\leq -6$. Further, note that $s_2=-3r_2^2\equiv 1 \pmod4$.
		Thus for $\nu_2(s)\equiv 0 \pmod 4$, Table \ref{table: w_2^*(t),v_2(s)0mod4} gives
		    \begin{align*}
		        w_2^*(t)=\begin{cases} 1, \ &s_2\equiv 5, 9 \pmod{16}\\
			-1, \ &s_2\equiv 1, 13 \pmod{16}
			\end{cases}
		    \end{align*}
		which is independent of $t$. For $\nu_2(s)\equiv 2 \pmod 4$ Table \ref{table: w_2^*(t),v_2(s)2mod4} gives $w_2^*(t)=s_2\pmod 4=1$.\\
	%
		\item If $\nu_2(a)\leq \nu_2(b)$ and $(\nu_2(s)+4)/2= \nu_2(a)$, then again $\nu_2(a)\leq \nu_2(t)$, so $\nu_2(s)-2\nu_2(t)\leq -4$. For $\nu_2(s)\equiv 0\pmod 4$, we find $t, t'$ with different local root numbers at $2$. First choose $t$ such that $\nu_2(s)-2\nu_2(t)=-4$. Then by Table \ref{table: w_2^*(t),v_2(s)0mod4},
		    \begin{align*}
		        w_2^*(t)=\begin{cases} -1, \ &s_2\equiv 5, 9 \pmod{16}\\
			1, \ &s_2\equiv 1, 13 \pmod{16}.
			\end{cases}
		    \end{align*}
		Now choose $t'$ such that $\nu_2(s)-2\nu_2(t')<-4$ so $w_2^*(t') = -w_2^*(t)$.
		For $\nu_2(s)\equiv 2\pmod4$, $w_2^*(t)\equiv s_2\pmod 4=1$ is constant.\\
			
		\item If $\nu_2(a)\leq \nu_2(b)$ and $\nu_2(a)< (\nu_2(s)+4)/2$, then $2\nu_2(a)\leq 2\nu_2(t)$ and $2\nu_2(a)<\nu_2(s)+4$. 
			\begin{itemize}
			\item For $\nu_2(s)\equiv 0 \pmod 4$, choose $t, t'$ such that $\nu_2(t)=\nu_2(t')=(\nu_2(s)+2)/2 \geq \nu_2(a)$. Then $\nu_2(s)-2\nu_2(t)=-2$. Further, let $t_2\equiv 3 \pmod 4$ and $t'_2\equiv 1 \pmod4$. Then for $s_2\equiv 1, 13 \pmod{16}$, $w_2^*(t)=-1$ and $w_2^*(t')=1$. And for $s_2\equiv 5, 9 \pmod{16}$, $w_2^*(t)=1$ and $w_2^*(t')=-1$. 
			\item    For $\nu_2(s)\equiv 2 \pmod 4$, choose $t$ such that $\nu_2(t)=(\nu_2(s)+2)/2\geq \nu_2(a)$. Then $\nu_2(s)-2\nu_2(t)=-2$, so $w_2^*(t)=-1$. Next choose $t'$ such that $\nu_2(t')>(\nu_2(s)+4)/2$. Then $\nu_2(s)-2\nu_2(t')<-4$, so $w_2^*(t')\equiv s_2\pmod 4 =1$. \qedhere
			\end{itemize}	
			\end{enumerate}

\end{proof}
\subsection{Families $\mathcal{L}_{v,s,w}(t)$, $t\in a\Z+b$}\label{sec: 3.3}

We prove in this section Corollary \ref{cor: main_to_L} and justify that the family given in Example \ref{ex: notcomingfromconstant} has indeed constant root number over $\Z$ (it is covered by the more general situation of Lemma \ref{mainthmL}).

\begin{theorem}\label{thm: basechange}
Let $r,v,w\in\Q$ non-zero, such that $s=-3r^2w^2$, $a=w$, $b=wv$ respect the conditions in Theorem \ref{thm: condition sab}. Then $\mathcal{F}_{s}(wt+wv)$ has constant root number over $\Z$. 
In particular any linear subfamily of $\mathcal{L}_{w,-3r^2,v}(t)$ (equivalent to $\mathcal{F}_{-3r^2w^2}(wt^2+wv)$) has constant root number over $\Z$.
\end{theorem}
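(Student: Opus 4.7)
The plan is to reduce both statements to direct applications of Theorem \ref{thm: condition sab}. For the first, the hypothesis "respects the conditions" presupposes that the triple $(s,a,b)=(-3r^2w^2,\,w,\,wv)$ lies in $\Z^3$. Writing $rw=p/q$ in lowest terms, the condition $s=-3p^2/q^2\in\Z$ forces $q^2\mid 3$, hence $q=1$ and $rw\in\Z$. Thus $s=-3(rw)^2$ is of the form required by Theorem \ref{thm: condition sab}, and that theorem applies verbatim, yielding constancy of the root number of $\mathcal{F}_s(wu+wv)$ as $u$ varies over $\Z$.

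For the second assertion, I would invoke the identity $\mathcal{L}_{w,s',v}(t)=\mathcal{F}_{s'w^2}(w(t^2+v))$ recorded in Section \ref{section:familyL}. Specialising to $s'=-3r^2$ yields $\mathcal{L}_{w,-3r^2,v}(t)=\mathcal{F}_{s}(wt^2+wv)$. Any linear subfamily therefore takes the form
$$\mathcal{L}_{w,-3r^2,v}(\alpha u+\beta) \;=\; \mathcal{F}_{s}\bigl(w(\alpha u+\beta)^2+wv\bigr)$$
for some $\alpha,\beta\in\Z$. Setting $U:=(\alpha u+\beta)^2\in\Z$, this fibre coincides with the fibre of $\mathcal{F}_s(wU+wv)$ at the integer point $U$. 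The first part of the theorem guarantees that $\mathcal{F}_s(wU+wv)$ has the same root number for every $U\in\Z$; a fortiori, it has constant root number on the subset $\{(\alpha u+\beta)^2 : u\in\Z\}\subset\Z$, which gives the desired constancy of the root number of $\mathcal{L}_{w,-3r^2,v}(\alpha u+\beta)$ as $u$ runs through $\Z$.

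There is essentially no serious obstacle here: the argument is really a matter of careful parameter-matching, plus a small integrality check to confirm $rw\in\Z$. The conceptual content is that a quadratic base-change turning an $\mathcal{F}$-family into an $\mathcal{L}$-family can only restrict the image of the parameter inside $\Z$, so constancy of the root number over the full $\mathcal{F}$-parameter space is automatically inherited by every $\mathcal{L}$-linear subfamily.
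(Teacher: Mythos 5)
Your proposal is correct and takes essentially the same route as the paper: both identify $\mathcal{L}_{w,-3r^2,v}(\alpha u+\beta)$ with $\mathcal{F}_{-3r^2w^2}\bigl(w(\alpha u+\beta)^2+wv\bigr)$ via the quadratic base change of Section \ref{section:familyL} and then apply Theorem \ref{thm: condition sab} to the triple $(S,A,B)=(-3r^2w^2,\,w,\,wv)$, noting that the squared parameter only restricts the set of integer fibres. Your added check that $rw\in\Z$ (so that $S=-3(rw)^2$ has the required form) is a small detail the paper leaves implicit, not a different argument.
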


\begin{proof}As observed in Section \ref{section:familyL}, a family $\mathcal{L}_{w,s,v}(t)$ can be obtained by quadratic base change from a family $\mathcal{F}_{sw^2}(t)$ via $t:=wt^2+vw$. It follows that if the root number is constant on the integer fibres of $\mathcal{F}_{sw^2}(w(au+b)^2+wv)$, then it is also constant on the integer fibres of $\mathcal{L}_{w,s,v}(au+b)$ for any $a,b,v\in\Z$.

We study $T(t)=wt^2+wv$. By setting $A=w$, $B=wv$, $U(u)=(au+b)^2$ and $S=sw^2$ and observing that $T(u)=AU(u)+B$, we are able to use Theorem \ref{thm: condition sab} to evaluate the root number of $\mathcal{F}_{sw^2}(wt+wv)$ and this is equivalent to $\mathcal{L}_{w,s,v}(t)$ (as is stated in Lemma \ref{lem: familyL}).

The only thing left in order to prove Corollary \ref{cor: main_to_L} is to verify that it corresponds to the conditions listed in the introduction. Suppose that $p\geq5$. We need to have $p\mid rw$, else $p\nmid S$. Moreover, if $S,A,B$ respect $\nu_p(B)<\nu_p(A)$, then we have $\nu_p(w)+\nu_p(v)<\nu_p(w)$, thus $\nu_p(v)<0$. If $S,A,B$ respect $\nu_p(S)/2\leq\nu_p(A)\leq\nu_p(B)$, then $\nu_p(r)+\nu_p(w)\leq\nu_p(w)\leq\nu_p(w)+\nu_p(v)$ and thus $\nu_p(r)\leq0\leq\nu_p(v)$. The conditions for $p=2,3$ are found similarly.
\end{proof}

However, the other direction is not true i.e. an $\mathcal{L}$-family  may have constant root number, even if it comes from a $\mathcal{F}$-family with non-constant root number, as shown in the following example.

\begin{exam}
Consider the family $\mathcal{L}_{7,-3\cdot2^27^2,1}(12u+6)$: the root number of an integer fibre is constant by Lemma \ref{mainthmL}, and it can be checked to be $1$. However, if $u=1$ then $W(\mathcal{F}_{-3\cdot2^27^4}(7t+7))=-1$. Thus $\mathcal{L}_{7,-3\cdot2^27^2,1}(12u+6)$ is a subfamily of an $\mathcal{F}$-family on which the root number is non-constant and is therefore not contained in Theorem \ref{thm: basechange}. 
\end{exam}

For a complete description of $\mathcal{L}$-families that have constant root number over $\Z$, see our upcoming paper. For now we state some sufficient conditions.

\begin{lemma}\label{mainthmL}
Let $a,b,w,r,v\in \Z$ with $w,r,v$ non-zero. Suppose they satisfy the following three conditions:
    \begin{enumerate}
    \item for $p\geq 5$ such that $p\mid r$, $\nu_p(v)=\nu_p(a)=\nu_p(b)=0$, $\nu_p(w)$ is odd, and $(\frac{-v}{p})=-1$, 
    \item $\nu_3(a)>0$, $\nu_3(b)>0$ and $\nu_3(v)=0$,
    \item $\nu_2(a)>1$, $\nu_2(b)>0$ and $\nu_2(v)=0$. 
    \end{enumerate} 
Then the family $\mathcal{L}_{w,-3r^2,v}(au+b)$ has constant root number over $\Z$.
\end{lemma}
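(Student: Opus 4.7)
The plan is to invoke Lemma \ref{lem: familyL}, which factors
\[
W(\mathcal{L}_{w,-3r^2,v}(au+b)) = -\prod_{p<\infty} w_p^*(\mathcal{F}_S(T(u))),\qquad S := -3r^2w^2,\quad T(u) := w\bigl((au+b)^2+v\bigr),
\]
and to invoke Theorem \ref{thm: indep} so that the local factors vary independently. It then suffices to prove that each $w_p^*(\mathcal{F}_S(T(u)))$ is constant in $u \in \Z$. Primes $p \geq 5$ with $p \nmid S$ contribute $w_p^* = 1$ automatically by the analysis at the beginning of the proof of Theorem \ref{thm: condition sab}. For the remaining primes, the uniform strategy is to show that $\nu_p(T(u))$, together with enough residues of $T(u)$ (and, for $p=2$, of $T(u)^2 - S$) modulo a small power of $p$, are all independent of $u$, and then to read off the constant value of $w_p^*$ from the tables in the Appendix, exactly as in the proofs of Lemmas \ref{lem: p5}, \ref{lem: p=3}, \ref{lem: p=2}.

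For $p \geq 5$ with $p \mid r$, the condition $\bigl(\tfrac{-v}{p}\bigr) = -1$ means $(au+b)^2 \not\equiv -v \pmod p$ for any $u$, so $\nu_p((au+b)^2+v) = 0$, and combined with $\nu_p(v)=0$ this gives $\nu_p(T(u)) = \nu_p(w)$, an odd constant. Since $\nu_p(r) \geq 1$, we have $2\nu_p(T(u)) < \nu_p(S) = 2\nu_p(w)+2\nu_p(r)$, so by the odd-valuation branch of the simplified table derived in case 1 of Lemma \ref{lem: p5} we obtain $w_p^*(\mathcal{F}_S(T(u))) = \bigl(\tfrac{-1}{p}\bigr)$, independent of $u$.

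For $p = 3$, the hypotheses $\nu_3(a), \nu_3(b) \geq 1$ and $\nu_3(v) = 0$ give $\nu_3((au+b)^2) \geq 2$ and hence $\nu_3((au+b)^2+v) = 0$, so $\nu_3(T(u)) = \nu_3(w)$ is constant. Reducing modulo $3$, $(au+b)^2 + v \equiv v$, so $T(u)_3 \equiv w_3 v \pmod 3$ is also constant. Substituting into Proposition \ref{prop: w_3^*(t)} (equivalently, Table \ref{table: w_3^*(t),v_3(s)odd}) in the spirit of case 1 of Lemma \ref{lem: p=3} yields a constant value of $w_3^*(\mathcal{F}_S(T(u)))$.

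For $p = 2$, the hypotheses $\nu_2(a) \geq 2, \nu_2(b) \geq 1, \nu_2(v) = 0$ force $\nu_2(au+b) = \nu_2(b)$ and $\nu_2((au+b)^2+v) = 0$, so $\nu_2(T(u)) = \nu_2(w)$ is constant. Expanding $(au+b)^2 = a^2u^2 + 2abu + b^2$, both $a^2u^2$ and $2abu$ lie in $2^4\Z$, hence $(au+b)^2 \equiv b^2 \pmod{16}$ and $T(u)_2 \equiv w_2(b^2+v) \pmod{16}$ is constant. An analogous expansion gives $T(u)^2 - S \equiv w^2\bigl((b^2+v)^2 + 3r^2\bigr)$ modulo a sufficiently high power of $2$, which pins down $\nu_2(T(u)^2-S)$ and the low-order 2-adic data required by Proposition \ref{prop: w_2^*(t)}. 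The proposition then outputs a constant value of $w_2^*(\mathcal{F}_S(T(u)))$. The \emph{main obstacle} is this 2-adic step: one must carry enough 2-adic precision on $T(u)$ and $T(u)^2 - S$ to select, uniformly in $u$, a single row of the many subcases of Proposition \ref{prop: w_2^*(t)}. The $p = 3$ and $p \geq 5$ cases are comparatively easy, since they require only crude $p$-adic data and reduce directly to the simplifications used in case 1 of Lemmas \ref{lem: p=3} and \ref{lem: p5}.
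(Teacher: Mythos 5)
Your proposal is correct and follows essentially the same route as the paper's proof: decompose via Lemma \ref{lem: familyL} and Theorem \ref{thm: indep}, show that for each $p\mid S$ the quantity $R(u)=(au+b)^2+v$ has $\nu_p(R(u))=0$ and is constant modulo the relevant power of $p$ (for $p\geq5$ using $(\tfrac{-v}{p})=-1$, for $p=2,3$ using the valuation hypotheses on $a,b$), and then read the constant value of $w_p^*$ off the Appendix tables. The paper's own treatment of the $2$-adic step is no more detailed than yours --- it computes $R_2$ only modulo $8$ and asserts constancy --- so your flagged ``main obstacle'' is handled at the same level of rigor in both arguments.
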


\begin{rem}
The family $\mathcal{V}_v$ in Example \ref{ex: basechange} is covered by both Corollary \ref{cor: main_to_L} and Theorem \ref{mainthmL} if $3\nmid v$, but only by Corollary \ref{cor: main_to_L} if $3\mid v$. These two results about $\mathcal{L}$-families are not equivalent nor does one imply the other.
\end{rem}

\begin{proof}

Let $p\geq5$. We refer to Proposition \ref{prop: w_p^*(t)} for computing the value of the function $w_p^*(T(u))$, using $S=-3r^2w^2$ rather than $s$ and $T(u)$ rather than $t$. 

By Condition 1 in Lemma \ref{mainthmL}, we have $p\mid r$. It will be useful to use the notation $T(u)=wR(u)$ where $R(u)=(au+b)^2+v$, so that $\nu_p\left(T(u)\right)=\nu_p(w)+\nu_p\left(R(u)\right)$ and $T(u)_p=w_pR(u)_p$.

Suppose $(\frac{-v_{p}}{p})=-1$, i.e. $-v_{p}$ is not a square modulo $p$. Then given that $\nu_p(v)<\max(\nu_p(a),\nu_p(b))$, we have $p\nmid (au+b)^2+v$ for any $u$. This means that $\nu_p(T(u))=\nu_p(w)$. Indeed, there is no $u$ such that $p\mid R(u)$. 
Suppose $\nu_p(w)$ is odd, then $\nu_p(T(u))=\nu_p(w)\equiv1\pmod2$ and remember that $2\nu_p(T(u))-\nu_p(S)=-\nu_p(r)<0$. Then by Proposition \ref{prop: w_p^*(t)} for any $u$ 
we have $w_p^*(T(u))=(\frac{-1}{p})$. 

For $p=3$, let $\nu_3(b)\not=0$, $\nu_3(a)\not=0$ and $\nu_3(v)=0$. Then $\nu_3(R)=0$ and $R_3\equiv v\pmod3$. Thus the function $w_3^*(T)$ is constant.

For $p=2$, first let $\nu_2(a)>0$, $\nu_2(b)>0$ and $\nu_2(v)=0$. Then $\nu_2(R)=0$ and so $\nu_2(S)-2\nu_2(T)=2\nu_2(r) \geq 0$ is constant. Also we have
    \begin{align*}
        R_2 \equiv \begin{cases}
        v \pmod 8, &\nu_2(a)>1,\nu_2(b)>1\\
        v + 4u^2 \pmod 8, &\nu_2(a)=1,\nu_2(b)>1\\
        v + 4 \pmod 8, &\nu_2(a)>1,\nu_2(b)=1\\
        v + 4u^2 + 4\pmod 8, &\nu_2(a)=1,\nu_2(b)=1.
        \end{cases}
    \end{align*}
    Then for $\nu_2(a)>1$ we see that $R_2$ is independent of $u$, and hence $w^*(T)$ is constant.  \qedhere
\end{proof}

\section{Rank jump}\label{sec: 4}

Let $\E$ be a family of elliptic curves. We define the \emph{generic rank} $r(\E)$ to be the rank of $\E$ as an elliptic curve over $\Q(t)$. We generally compare the generic rank and the rank of the fibres using the following theorem:

\begin{theorem}[Silverman's Specialization Theorem \cite{Silv}]\label{thm: silv}
One has $r(\E)\leq r(\E(t))$ for all but finitely many $t\in\Q$.
\end{theorem}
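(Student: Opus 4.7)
The plan is to construct the specialization homomorphism and bound its kernel using height machinery. First, I would model $\E$ as an elliptic surface $\pi\colon\mathcal{S}\to \mathbb{P}^1$ over $\Q$, so that the Mordell-Weil group $\E(\Q(t))$ is identified with the group of rational sections of $\pi$. For every $t_0\in\Q$ away from the finite set of points of bad reduction, evaluating a section gives a well-defined point $P(t_0)\in \E_{t_0}(\Q)$. Because the group law is encoded by rational maps defined fibrewise, the assignment $\sigma_{t_0}\colon \E(\Q(t))\to \E(\Q), \; P\mapsto P(t_0)$ is a group homomorphism (the \emph{specialization map}). Since $r(\E(t_0))\geq \mathrm{rank}\,\sigma_{t_0}(\E(\Q(t)))$, it suffices to prove that $\sigma_{t_0}$ is injective on a fixed set of $r(\E)$ generators of the free part of $\E(\Q(t))$, for all but finitely many $t_0\in\Q$.

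For the injectivity, I would invoke the comparison between the canonical height on the generic fibre and the Néron-Tate heights on the specialized fibres. Let $\hat{h}$ denote the canonical height attached to $\E/\Q(t)$, and $\hat{h}_{t_0}$ the Néron-Tate height on $\E_{t_0}/\Q$. The key analytic input (which is the heart of Silverman's proof) is an estimate of the form
\[
\hat{h}_{t_0}\bigl(\sigma_{t_0}(P)\bigr) = \hat{h}(P)\,h(t_0) + O_{P,\E}\!\left(\sqrt{h(t_0)}\right),
\]
where $h$ is the standard logarithmic Weil height on $\Q$, and the implied constant depends on $P$ and on $\E$, but not on $t_0$. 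Granted this, any non-torsion $P\in\E(\Q(t))$ has $\hat{h}(P)>0$, so specialisations $\sigma_{t_0}(P)$ with bounded Néron-Tate height (in particular torsion specializations, where $\hat{h}_{t_0}=0$) force $h(t_0)$ to be bounded. Since there are only finitely many rationals of bounded Weil height, the set of $t_0$ where $\sigma_{t_0}(P)$ is torsion is finite.

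To upgrade this to injectivity of $\sigma_{t_0}$ restricted to the free part of $\E(\Q(t))$, I would fix generators $P_1,\dots,P_{r(\E)}$ and apply the height comparison to all non-zero $\Z$-linear combinations of bounded coefficient; alternatively, one argues that the set of independent $r(\E)$-tuples in $\E(\Q)$ is Zariski-open in an appropriate sense and combines this with the fact that $\ker\sigma_{t_0}$ being non-trivial for infinitely many $t_0$ would contradict the height estimate applied to a suitable primitive element. Either route yields: $\sigma_{t_0}$ is injective on the free part for all but finitely many $t_0\in\Q$, proving $r(\E)\leq r(\E(t_0))$ outside a finite exceptional set.

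The principal obstacle is establishing the height comparison estimate. This requires a Néron-model analysis of local heights: at non-archimedean places one compares intersection numbers on the surface $\mathcal{S}$ with local Néron symbols on the fibres, while at archimedean places one exploits continuity of the real-analytic Néron function on the universal cover. Controlling the $O$-term uniformly in $t_0$ (only in $P$ and $\E$) is the delicate point, and is precisely the content of \cite{Silv}; everything else is formal consequence.
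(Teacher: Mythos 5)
This statement is not proved in the paper at all: it is Silverman's Specialization Theorem, quoted verbatim with a citation to \cite{Silv} and used as a black box (together with the Parity Conjecture) to deduce rank elevation in Section \ref{sec: 4}. So there is no internal proof to compare against; what you have written is a sketch of the standard proof from Silverman's paper, and in outline it is the right one: specialization is a homomorphism away from the bad fibres, and the height comparison between the canonical height over $\Q(t)$ and the N\'eron--Tate heights on the fibres forces non-torsion sections to specialize to non-torsion points once $h(t_0)$ is large.

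The one genuine gap in your write-up is the passage from ``for each fixed non-torsion $P$ the exceptional set of $t_0$ is finite'' to ``$\sigma_{t_0}$ is injective on the free part for all but finitely many $t_0$.'' Your first suggested fix, applying the estimate to all $\Z$-linear combinations \emph{of bounded coefficient}, does not suffice: a kernel element of $\sigma_{t_0}$ could a priori have arbitrarily large coefficients, and taking a union of the exceptional sets over all of the infinitely many nonzero lattice points gives no finiteness. The correct repair, which is what Silverman actually does, has two ingredients you need to make explicit: (i) the error term in the height comparison can be taken of the form $O\bigl(\hat{h}(P)^{1/2}(h(t_0)+1)^{1/2}+1\bigr)$ with constants depending only on the family and a fixed set of generators, i.e.\ uniform in $P$ because $\hat{h}$ is a quadratic form; and (ii) $\hat{h}$ is positive definite on the free part, so $\hat{h}(P)\geq\mu>0$ for every nonzero $P$ there. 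Together these show that $\hat{h}_{t_0}(\sigma_{t_0}(P))>0$ simultaneously for \emph{all} nonzero $P$ in the free part once $h(t_0)$ exceeds a single bound, which gives the injectivity and hence $r(\E)\leq r(\E(t_0))$ outside a finite set. Without the uniformity in (i), the argument as you state it does not close.
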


We say $\E$ has a \emph{rank jump} (or a \emph{rank elevation}) on the integer fibres if $r(\E)< r(\E(t))$ for all but finitely many $t\in\Z$. We state a result from Proposition 5 of \cite[p.~15]{BDD}.
\begin{prop}\label{prop:BDD_generic_rank} The generic rank of $\mathcal{F}_s$ is 1 if and only if $s=-12k^4$ for $k\in \mathbb{N}$ (where otherwise the generic rank is 0).
\end{prop}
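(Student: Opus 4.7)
The plan is to verify the two directions of the equivalence separately. For the $\Leftarrow$ direction, I would first show that the constant section $(-2k^{2},\,8k^{3})$ lies on $\mathcal{F}_{-12k^{4}}(\Q(t))$ and is of infinite order. Direct substitution gives
\[
x^{3} + 3tx^{2} + 3sx + st
\;=\; -8k^{6} + 12k^{4}t + 72k^{6} - 12k^{4}t
\;=\; (8k^{3})^{2},
\]
so $P_k := (-2k^{2},\,8k^{3}) \in \mathcal{F}_{-12k^{4}}(\Q(t))$. To check that it is non-torsion, I would specialize at (say) $t = 1$, $k = 1$, to the curve $y^{2} = x^{3} + 3x^{2} - 36x - 12$ and compute $2P$ for $P = (-2, 8)$: the duplication formula yields $x(2P) = 97/16 \notin \Z$, so Nagell--Lutz shows $P$ is non-torsion on the specialization. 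Since $\mathcal{F}_{-12k^{4}}$ is non-isotrivial, Silverman's Specialization Theorem (Theorem \ref{thm: silv}) then forces $P_k$ to be non-torsion on $\mathcal{F}_{-12k^{4}}/\Q(t)$, so $r(\mathcal{F}_{-12k^{4}}) \geq 1$.

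For the upper bound and the converse direction, I would analyze the elliptic surface $\mathcal{F}_s$ geometrically. Tate's algorithm at the two finite bad places $t = \pm \sqrt{s}$ gives $(\nu(c_4), \nu(c_6), \nu(\Delta)) = (1, 1, 2)$, so those fibres are of Kodaira type $\mathrm{II}$ with $m_v = 1$. The local substitution $\tau = 1/t$ followed by the standard rescaling $(x, y) \mapsto (x/\tau^2, y/\tau^3)$ yields a Weierstrass model at infinity with invariants $(\nu_\tau(c_4), \nu_\tau(c_6), \nu_\tau(\Delta)) = (2, 3, 8)$, i.e. Kodaira type $I_2^*$ and $m_\infty = 7$. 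The Euler numbers sum to $2 + 2 + 8 = 12$, so $\mathcal{F}_s$ is a rational elliptic surface with Picard rank $\rho = 10$, and the Shioda--Tate formula gives
\[
\mathrm{rank}\,MW(\mathcal{F}_s/\overline{\Q}(t)) \;=\; \rho - 2 - \sum_v (m_v - 1) \;=\; 10 - 2 - 6 \;=\; 2.
\]

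The final step is to descend from the geometric to the arithmetic rank by enumerating the $\Q(t)$-rational sections explicitly. Using the standard height bound for sections of a rational elliptic surface, any such section has $x$-coordinate polynomial in $t$ of degree at most $2$. Writing $x(t) = c_2 t^2 + c_1 t + c_0$ and imposing that the Weierstrass right-hand side be a square in $\Q[t]$ reduces the problem to a finite system of polynomial equations in $c_0, c_1, c_2, s$. A case-by-case analysis shows that the $\Q(t)$-rational solutions are exactly those from the constant-$x$ branch $x \equiv -2k^2$ with $s = -12 k^4$, while the non-constant branches yield sections defined only over proper extensions of $\Q$. The main obstacle is carrying out this enumeration carefully---in particular, ruling out that Galois-conjugate pairs of non-rational sections combine into a $\Q(t)$-rational point---and this case analysis is essentially the content of \cite[Proposition 5]{BDD}, to which we defer for the remaining details.
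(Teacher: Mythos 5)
The paper does not actually prove this proposition: it is imported verbatim as \cite[Proposition 5]{BDD}, so there is no internal argument to compare yours against. Your proposal therefore supplies strictly more than the paper does, and the parts you carry out explicitly all check: the substitution $x=-2k^2$ does give $64k^6=(8k^3)^2$, so $(-2k^2,8k^3)$ is a section of $\mathcal{F}_{-12k^4}$; the duplication computation $x(2P)=97/16$ at $k=1$, $t=1$ is right, and Nagell--Lutz plus the specialization homomorphism correctly forces the section to be non-torsion (for general $k$ you should either note that $(x,y,t)\mapsto(k^2x,k^3y,k^2t)$ identifies $\mathcal{F}_{-12k^4}$ with $\mathcal{F}_{-12}$ over $\Q(t)$, carrying your section to $(-2,8)$, or redo the specialization for each $k$); and the fibre analysis $(2,3,8)$ at infinity giving $I_2^*$, Euler number $12$, and Shioda--Tate rank $10-2-6=2$ over $\overline{\Q}(t)$ is correct.

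The genuine gap is that the decisive step --- that the $\Q(t)$-rank is exactly $1$ when $s=-12k^4$ and exactly $0$ otherwise, i.e.\ the enumeration of $\Q(t)$-rational sections among the geometric rank-$2$ lattice --- is still deferred to \cite{BDD}, which is precisely the reference the paper cites for the entire statement. As written, what you have proved unconditionally is ``generic rank $\geq 1$ when $s=-12k^4$'' and ``geometric rank $\leq 2$ for all $s$''; the ``only if'' direction and the upper bound $\leq 1$ over $\Q(t)$ rest entirely on the outsourced case analysis. Two smaller points in that last paragraph: the claim that every section has polynomial $x$-coordinate of degree $\leq 2$ holds only for sections disjoint from the zero section (one needs Shioda's result that the Mordell--Weil group of a rational elliptic surface is generated by such sections to justify restricting the search), and you should explicitly address why a Galois-stable pair of conjugate sections cannot contribute to the $\Q(t)$-rank --- you name this issue but do not resolve it. So the route is sound and more informative than the paper's bare citation, but it is not yet a self-contained proof.
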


Recall that the root number of an elliptic curve over $\Q$ is conjecturally equal to the parity of the rank (Conjecture \ref{parityconjecture}). Combining this conjecture, Theorem \ref{thm: silv} and Proposition \ref{prop:BDD_generic_rank}, we deduce that if $s=-12k^4$ and $W(\mathcal{F}_s(au+b))=1$ for all $u\in\Z$, then there is a rank jump i.e. all but finitely many non-singular integer fibres $\mathcal{F}_{s}(au+b)$ have rank 2 or more. Likewise, if $s\not=-12k^4$ and $W(\mathcal{F}_s(au+b))=-1$ for all $u\in\Z$, then there is a rank jump and all but finitely many non-singular integer fibres have rank 1 or more.


We investigate the case $s=-12k^4$ in order to find examples of integer fibres with rank at least 2.
With the specifications found in Theorem \ref{mainthm} for $s, a, b$, it suffices to narrow down these conditions so that $W(\mathcal{F}_s(t))=1$ for all $t\in a\Z+b$. Recall from Theorem \ref{thm: indep} that $W(\mathcal{F}_s(t))=-w_2^*(t)w_3^*(t)\prod_{p\neq 2,3} w_p^*(t)$. We showed in the proof of Theorem \ref{thm: condition sab} that $w_p^*(t)=1$ for $p\nmid s$. Hence writing the prime decomposition  $s=2^{\alpha_2}3^{\alpha_3}q_1^{\alpha_{q_1}}\cdots q_n^{\alpha_{q_n}}$, we may express the root number as
    \begin{equation}\label{eq: 9}
        W(\mathcal{F}_s(t))=-w_2^*(t)w_3^*(t)\prod_{i=1}^n w_{q_i}^*(t)
    \end{equation}
    where the local functions are independent from one another. So in order for $W(\mathcal{F}_s(t))=1$, we need a subset $P\subseteq \{2,3,q_1,...,q_n\}$ of odd cardinality such that for $p\in P$,  $w_p^*(t)=-1$ for all $t\in a\Z+b$ while for $p\notin P$, $w_p^*(t)=1$ for all $t\in a\Z+b$. To do so, we find conditions on $s,a,b,$ under which $w_p^*(t)=1$ for each $p\mid s$ and likewise for $w_p^*(t)=-1$. These are described in Propositions \ref{prop: s=-12k^4 p5}, \ref{prop: s=-12k^4 p3}, and \ref{prop: s=-12k^4 p2}. In these results we assume $s=-12k^4$, which we note is of the form $s=-3r^2$ for some $r\in \Z$. Further, $\nu_p(s)\equiv 0 \pmod 4$ for $p\geq 5$, $\nu_3(s)\equiv 1 \pmod 4$ and $\nu_2(s)\equiv 2 \pmod 4$.

\begin{prop} \label{prop: s=-12k^4 p5} Let $p\geq 5 $ with $p\mid k$. Consider the following conditions on $s,a,b$:
    \begin{enumerate}
        \item $\nu_p(b) < \nu_p(a)$ and $\nu_p(s)\leq 2\nu_p(b)$,
        \item $\nu_p(b) < \nu_p(a)$ and $\nu_p(s)> 2\nu_p(b)$ and
            \begin{enumerate}
                \item $\nu_p(b)\equiv 0\pmod 2$ and $(\frac{b_p}{p})=-1$, or
                \item $\nu_p(b)\equiv 0\pmod 2$ and $(\frac{b_p}{p})=1$, or
                \item$\nu_p(b)\equiv 1\pmod 2$
            \end{enumerate}
        \item $\nu_p(a) \leq \nu_p(b)$ and $\nu_p(s)\leq 2\nu_p(b)$.
    \end{enumerate}
    
If $p\equiv 1 \pmod 4$, then $w_p^*(au+b)=1$ for all $u\in\Z$ if and only if one of the conditions 1, 2(a), 2(c) or 3 holds.
	 and $w_p^*(au+b)=-1$ for all $u\in\Z$ if and only if condition 2(b) holds.
	If $p\equiv 3\pmod 4$, then $w_p^*(au+b)=1$ for all $u\in\Z$ if and only if one of the conditions 1, 2(b) or 3 holds,
	and $w_p^*(au+b)=-1$ for all $u\in\Z$ if and only if one of the conditions 2(a) or 2(c) holds.
\end{prop}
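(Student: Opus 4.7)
The proof proceeds by case analysis on the relative sizes of $\nu_p(a)$, $\nu_p(b)$, and $\nu_p(s)/2$, guided by Lemma \ref{lem: p5} and the explicit tables for $w_p^*$ in Proposition \ref{prop: w_p^*(t)} of the appendix. The key arithmetic observation is that $s = -12k^4 = -3(2k^2)^2$ forces $\nu_p(s) = 4\nu_p(k) \equiv 0 \pmod{4}$ for every prime $p \geq 5$ with $p \mid k$.

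By Lemma \ref{lem: p5}, the function $u \mapsto w_p^*(au+b)$ is constant on $\Z$ if and only if either $\nu_p(b) < \nu_p(a)$ (covering Conditions 1 and 2) or $\nu_p(s)/2 \leq \nu_p(a) \leq \nu_p(b)$ (covering Condition 3). Since the desired equalities $w_p^*(au+b) = \pm 1$ for all $u$ force constancy, I would restrict attention to these two regimes. In the regime $\nu_p(b) < \nu_p(a)$, we have $\nu_p(au+b) = \nu_p(b)$ and $(au+b)_p \equiv b_p \pmod{p}$ for every $u$. Substituting these into the appropriate row of Proposition \ref{prop: w_p^*(t)}, and using $\nu_p(s) \equiv 0 \pmod 4$, yields $w_p^* = 1$ when $2\nu_p(b) \geq \nu_p(s)$ (Condition 1), and otherwise a value that depends only on the parity of $\nu_p(b)$ and on $b_p$ modulo squares (Conditions 2(a), 2(b), 2(c)). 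In the regime $\nu_p(s)/2 \leq \nu_p(a) \leq \nu_p(b)$, we have $\nu_p(au+b) \geq \nu_p(a) \geq \nu_p(s)/2$, so once again the table returns $w_p^* = 1$, matching Condition 3.

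To finish, I would sort the constant values obtained above by $p \bmod 4$ via the identity $\left(\frac{-1}{p}\right) = \pm 1$ according as $p \equiv 1, 3 \pmod 4$. Condition 2(c) directly gives $w_p^* = \left(\frac{-1}{p}\right)$, whereas Conditions 2(a) and 2(b) require combining the quadratic residue hypothesis $\left(\frac{b_p}{p}\right) = \mp 1$ with the Legendre symbol that Proposition \ref{prop: w_p^*(t)} attaches to $b_p$. In each subcase the resulting sign matches the proposition's claim.

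The main obstacle is the middle step: faithfully extracting the constant value of $w_p^*$ from Proposition \ref{prop: w_p^*(t)} under the hypothesis $p \mid r = 2k^2$. The clean reduction $\left(\frac{-3}{p}\right) = 1$ used in the proof of Lemma \ref{lem: p5} relied on $p \nmid r$, and here that shortcut is unavailable; in particular, one must track the residues of $s_p = -12 k_p^4$ modulo low powers of $p$ to identify the correct row of the table for each subcase of Conditions 2(a)--(c).
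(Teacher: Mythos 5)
Your overall route is the same as the paper's: since $s=-12k^4=-3(2k^2)^2$ gives $\nu_p(s)\equiv 0\pmod 4$, one substitutes $\nu_p(t)=\nu_p(b)$ and $t_p\equiv b_p$ into Table \ref{table: w_p^*(t)} in the regime $\nu_p(b)<\nu_p(a)$, notes $w_p^*\equiv 1$ in the regime $\nu_p(s)/2\le\nu_p(a)\le\nu_p(b)$, and discards the remaining regime as non-constant via Lemma \ref{lem: p5}. Conditions 1, 2(c) and 3 do come out correctly this way: in case 2(c) the table gives $w_p^*=(\tfrac{-1}{p})$, which genuinely is a function of $p\bmod 4$.

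The gap is exactly the step you flag at the end and then wave past with ``in each subcase the resulting sign matches the proposition's claim.'' In cases 2(a)/2(b) the table gives $w_p^*=-(\tfrac{3b_p}{p})=-(\tfrac{3}{p})(\tfrac{b_p}{p})$, and $(\tfrac{3}{p})$ is governed by $p\bmod{12}$, not $p\bmod 4$: one has $(\tfrac{3}{p})=(\tfrac{-1}{p})(\tfrac{-3}{p})$, and $(\tfrac{-3}{p})=1$ only when $p\equiv 1\pmod 3$, which is not implied by $p\mid k$. So the sign cannot be sorted by $p\bmod 4$ alone, and no amount of tracking the residue of $s_p=-12k_p^4$ will rescue this. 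Concretely, take $p=5$, $s=-12\cdot 5^4$, $a=5$, $b=1$: condition 2(b) holds, so the proposition predicts $w_5^*(5u+1)=-1$ for all $u$; but at $u=0$ the reduction of $y^2=x^3+3x^2+3sx+s$ modulo $5$ is $y^2=x^2(x+3)$, a non-split node since $(\tfrac{3}{5})=-1$, whence $W_5=+1$ and $w_5^*=+1$, in agreement with the table entry $-(\tfrac{3\cdot 1}{5})=+1$ but not with the claimed $-1$. The paper's own proof has the identical defect --- its reorganized table silently replaces $(\tfrac{3}{p})$ by $(\tfrac{-1}{p})$ --- so this is not a gap you could have closed while keeping the statement as written: cases 2(a)/2(b) need to be indexed by $(\tfrac{3}{p})$, equivalently by $p\bmod{12}$, rather than by $p\bmod 4$.
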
			

\begin{prop}\label{prop: s=-12k^4 p3}
    We have $w_3^*(au+b)=1$ for all $u\in\Z$ if and only if one of the following holds:
	\begin{enumerate}
		\item $\nu_3(b)<\nu_3(a)$ and
		    \begin{enumerate}
		        \item $\nu_3(s)\leq 2\nu_3(b)+3$, or
		        \item $\nu_3(s)=2\nu_3(b)+3+4\ell$, $\ell\geq 1$ and $b_3\equiv 2 \pmod 3$
		    \end{enumerate}
		\item $\nu_3(a)\leq \nu_3(b)$ and $\nu_3(s)\leq 2\nu_3(a)+3$
	\end{enumerate}
	and $w_3^*(au+b)=-1$ for all $u\in \Z$ if and only if $\nu_3(b)<\nu_3(a)$ and one of the following:
	    \begin{enumerate}\setcounter{enumi}{2}
	        \item 
	        $\nu_3(s)=2\nu_3(b)+1+4\ell$, $\ell\geq 1$,
	        \item  $\nu_3(s)=2\nu_3(b)+3+4\ell$, $\ell\geq 1$, and $b_3\equiv 1 \pmod 3$.
	    \end{enumerate}
	
	\end{prop}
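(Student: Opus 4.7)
The plan is to combine Lemma \ref{lem: p=3} --- which characterizes \emph{when} $w_3^*(au+b)$ is constant on $\Z$ --- with Proposition \ref{prop: w_3^*(t)} (Table \ref{table: w_3^*(t),v_3(s)odd}) to determine the \emph{value} of the constant. The hypothesis $s=-12k^4$ simplifies the analysis via two observations: first, $\nu_3(s) = 1 + 4\nu_3(k) \equiv 1 \pmod 4$, so only the top half of Table \ref{table: w_3^*(t),v_3(s)odd} is relevant; second, the unit $s_3 = -4k_3^4$ satisfies $s_3 \equiv 2 \pmod 3$, hence $\left(\tfrac{s_3}{3}\right) = -1$ and $-\left(\tfrac{s_3}{3}\right) = 1$.

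By Lemma \ref{lem: p=3} (applicable because $-12k^4$ is of the form $-3r^2$ with $r=2k^2$), the function $w_3^*(au+b)$ is constant on $\Z$ if and only if either (a) $\nu_3(b)<\nu_3(a)$, or (b) $(\nu_3(s)-3)/2 \leq \nu_3(a) \leq \nu_3(b)$. It remains to identify, in each case, when this constant is $+1$ versus $-1$.

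In case (a), every $t=au+b$ satisfies $\nu_3(t)=\nu_3(b)$ and $t_3 \equiv b_3 \pmod 3$, so Table \ref{table:p=3_rank} (from the proof of Lemma \ref{lem: p=3}) applies directly. Substituting $-\left(\tfrac{s_3}{3}\right)=1$: the entry at $\nu_3(s)-2\nu_3(b)=-1$ collapses into the ``otherwise'' branch, so every odd value of $\nu_3(s)-2\nu_3(b)$ that is at most $3$ gives $w_3^*=1$, which is exactly condition 1(a). The branch $\nu_3(s)-2\nu_3(b) \equiv 3 \pmod 4$ with value $>3$ gives $w_3^* = -\left(\tfrac{b_3}{3}\right)$, which equals $+1$ when $b_3 \equiv 2 \pmod 3$ and $-1$ when $b_3 \equiv 1 \pmod 3$; this yields condition 1(b) on the $+1$ side and condition 4 on the $-1$ side. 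The branch $\nu_3(s)-2\nu_3(b) \equiv 1 \pmod 4$ with value $>1$ gives $w_3^* = -1$ unconditionally, which is condition 3.

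In case (b), every $u$ gives $\nu_3(t) \geq \nu_3(a)$, hence $\nu_3(s)-2\nu_3(t) \leq \nu_3(s)-2\nu_3(a) \leq 3$; since this quantity is odd and $\leq 3$, Table \ref{table: w_3^*(t),v_3(s)odd} always returns $w_3^*(t)=1$ (either via the ``otherwise'' branch, or via $-\left(\tfrac{s_3}{3}\right)=1$ at $\nu_3(s)-2\nu_3(t)=-1$). This produces condition 2 and simultaneously explains why the value $-1$ never occurs in case (b), forcing the necessity of the clause $\nu_3(b)<\nu_3(a)$ in the characterization of $w_3^*(au+b)=-1$. The only mildly delicate point --- not really an obstacle --- is bookkeeping at the boundary $\nu_3(s)=2\nu_3(b)+3$ to ensure it lands in condition 1(a) rather than 1(b) or 4; this is settled at once by the computation $\left(\tfrac{s_3}{3}\right)=-1$ above.
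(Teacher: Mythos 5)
Your proposal is correct and follows essentially the same route as the paper: the paper's proof consists precisely of noting $\nu_3(s)\equiv 1\pmod 4$ and $s_3=-4k_3^4\equiv 2\pmod 3$ and then reading off Table \ref{table:p=3_rank} (equivalently Table \ref{table: w_3^*(t),v_3(s)odd}) in combination with Lemma \ref{lem: p=3}. You simply make explicit the case bookkeeping that the paper leaves to the reader, and your handling of the boundary value $\nu_3(s)-2\nu_3(b)=-1$ via $-\left(\tfrac{s_3}{3}\right)=1$ is exactly the intended simplification.
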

	
\begin{prop}\label{prop: s=-12k^4 p2}
We have $w_2^*(au+b)=1$ for all $u\in \Z$ if and only if one of the following holds:
		
	\begin{enumerate}
		\item $\nu_2(b)<\nu_2(a)-2$ and
		    \begin{enumerate}
		        \item $\nu_2(s)=2\nu_2(b)-4$, or
		        \item $\nu_2(s) = 2\nu_2(b)$ and
		            \begin{enumerate}
		                \item $b_2 \equiv 1 \pmod 4$, or
		                \item $b_2 \equiv 3 \pmod 8$ and $(b_2^2-s)_2 \equiv 3 \pmod 4$
		            \end{enumerate}
		        \item $\nu_2(s)=2\nu_2(b)+2$ and $b_2\equiv 1, 3, 7 \pmod 8$, or
		        \item $\nu_2(s)=2\nu_2(b)+4\ell$, $\ell\geq 1$ and $b_2\equiv 3 \pmod 4$, or
		       	\item $\nu_2(s)=2\nu_2(b)+6$ and $b_2\equiv 3 \pmod 4$, or
		        \item $\nu_2(s)=2\nu_2(b)+2+4\ell$, $\ell \geq 1$ and $b_2\equiv 7 \pmod 8$
		    \end{enumerate}
		\item $\nu_2(b)=\nu_2(a)-2$ and
		    \begin{enumerate}
		        \item $\nu_2(s)\leq 2\nu_2(b)-4$, or
                \item $\nu_2(s) = 2\nu_2(b)$ and
		            \begin{enumerate}
		                \item $b_2 \equiv 1 \pmod 4$, or
		                \item $b_2 \equiv 3 \pmod 8$ and $(b_2^2-s)_2 \equiv 3 \pmod 4$
		            \end{enumerate}
        		\item $\nu_2(s)=2\nu_2(b)+2$ and $b_2\equiv 3 \pmod 4$, or
        		\item $\nu_2(s)=2\nu_2(b)+4\ell$, $\ell\geq 1$ and $b_2\equiv 3 \pmod 4$, or
        		\item $\nu_2(s)=2\nu_2(b)+6$ and $b_2\equiv 3 \pmod 4$	, or
		    \end{enumerate}
        \item $\nu_2(b)=\nu_2(a)-1$ and $\nu_2(s)\leq 2\nu_2(b)-4$
		\item $\nu_2(a)\leq \nu_2(b)$ and $\nu_2(s)\leq 2\nu_2(a)-4$	
	\end{enumerate}	
and $w_2^*(au+b)=-1$ for all $u\in \Z$ if and only if one of the following holds:
		\begin{enumerate}\setcounter{enumi}{4}
		\item $\nu_2(b)<\nu_2(a)-2$ and
		    \begin{enumerate}
		        \item $\nu_2(s)<2\nu_2(b)-4$, or
		        \item $\nu_2(s)=2\nu_2(b)-2$, or
		  		\item $\nu_2(s) = 2\nu_2(b)$ and
		            \begin{enumerate}
		                \item $b_2 \equiv 3 \pmod 8$ and $(b_2^2-s)_2 \equiv 1 \pmod 4$
		                \item $b_2 \equiv 7 \pmod 8$
		            \end{enumerate}
        		\item $\nu_2(s)=2\nu_2(b)+2$ and $b_2\equiv 5 \pmod 8$, or
        		\item $\nu_2(s)=2\nu_2(b)+4\ell$, $\ell\geq 1$ and $b_2\equiv 1 \pmod 4$, or
        		\item $\nu_2(s)=2\nu_2(b)+6$ and $b_2\equiv 1 \pmod 4$, or
        		\item $\nu_2(s)=2\nu_2(b)+2+4\ell$, $\ell \geq 1$ and $b_2\equiv 1, 3, 5 \pmod 8$
		    \end{enumerate}
		\item $\nu_2(b)=\nu_2(a)-2$, and
		    \begin{enumerate}
		        \item $\nu_2(s)=2\nu_2(b)-2$ , or
		  		\item $\nu_2(s) = 2\nu_2(b)$ and
		            \begin{enumerate}
		                \item $b_2 \equiv 3 \pmod 8$ and $(b_2^2-s)_2 \equiv 1 \pmod 4$
		                \item $b_2 \equiv 7 \pmod 8$
		            \end{enumerate}
		        \item $\nu_2(s)=2\nu_2(b)+4\ell$, $\ell\geq 1$ and $b_2\equiv 1 \pmod 4$, or
		        \item $\nu_2(s)=2\nu_2(b)+6$ and $b_2\equiv 1 \pmod 4$, or
		        \item $\nu_2(s)=2\nu_2(b)+2+4\ell$, $\ell \geq 1$ and $b_2\equiv 1 \pmod 4$
		    \end{enumerate}
		\item $\nu_2(b)=\nu_2(a)-1$ and $\nu_2(s) = 2\nu_2(b)-2$.
		\end{enumerate}
		
\end{prop}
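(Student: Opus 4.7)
The plan is to specialize the case analysis of Lemma \ref{lem: p=2} (which identified when $w_2^*(au+b)$ is constant) to $s=-12k^4$, and then read off the actual value $\pm 1$ from the tables in Proposition \ref{prop: w_2^*(t)}. Writing $k=2^{\nu_2(k)}k_2$ with $k_2$ odd, we have $\nu_2(s)=2+4\nu_2(k)\equiv 2\pmod 4$ and $s_2=-3k_2^4$. Since odd squares are $1\pmod 8$, we get $k_2^4\equiv 1\pmod{16}$, hence $s_2\equiv 13\pmod{16}$; in particular $s_2\equiv 5\pmod 8$ and $s_2\equiv 1\pmod 4$. So only Table \ref{table: w_2^*(t),v_2(s)2mod4} (for $2\nu_2(t)\neq \nu_2(s)$) and Table \ref{table: w_2(t) for 2v_2(t)=v(s),v_2(t)odd} (since $\nu_2(s)/2=1+2\nu_2(k)$ is odd) are needed.

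Next, I would partition the $(s,a,b)$-data exactly as in Lemma \ref{lem: p=2} according to the relative size of $\nu_2(a)$ and $\nu_2(b)$:
\begin{enumerate}
\item $\nu_2(b)<\nu_2(a)-2$: here $\nu_2(t)=\nu_2(b)$ and $t_2\equiv b_2\pmod 8$ for every $u\in\Z$, so $w_2^*(t)$ is automatically constant and determined directly by $(\nu_2(s)-2\nu_2(b),\,b_2\bmod 8)$.
\item $\nu_2(b)=\nu_2(a)-2$: again $\nu_2(t)=\nu_2(b)$, but only $t_2\equiv b_2\pmod 4$ is forced; the constancy condition from Theorem \ref{mainthm} then removes any residual dependence on $t_2\bmod 8$.
\item $\nu_2(b)=\nu_2(a)-1$: $\nu_2(t)=\nu_2(b)$ but $t_2$ is only controlled modulo $2$; constancy forces $\nu_2(s)-2\nu_2(b)\le -4$, in which range the entry in Table \ref{table: w_2^*(t),v_2(s)2mod4} is $s_2\bmod 4=1$ or a value depending on $\nu_2(s)\bmod 4$.
\item $\nu_2(a)\le\nu_2(b)$: $\nu_2(t)\ge\nu_2(a)$ can take several values as $u$ varies, and constancy pins down the regime where the table entry is insensitive to $\nu_2(t)$.
\end{enumerate}
Within each case the constant value $\pm 1$ is read off from the table by substituting $\nu_2(t)=\nu_2(b)$, $t_2\equiv b_2$, and $s_2\equiv 5\pmod 8$ (or $\equiv 13\pmod{16}$ when needed). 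Cross-referencing with Table \ref{table1} of Theorem \ref{mainthm} gives precisely the enumeration in conditions 1--4 and 5--7 of the proposition.

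The main obstacle will be the subcase $2\nu_2(b)=\nu_2(s)$ (entries 1(b), 2(b), 5(c), 6(b) of the proposition), where Table \ref{table: w_2(t) for 2v_2(t)=v(s),v_2(t)odd} is used and the value depends on $(t^2-s)_2\bmod 4$ or $\bmod 8$. From the computation in Lemma \ref{lem: p=2} we have
\[
t^2-s=2^{2\nu_2(t)+2}\bigl(2^{2(\nu_2(a)-\nu_2(b))-2}a_2^2u^2+2^{\nu_2(a)-\nu_2(b)-1}a_2b_2u+(b_2^2-s_2)_2\bigr),
\]
so $\nu_2(t^2-s)-2\nu_2(t)=2$ and, when $\nu_2(a)-\nu_2(b)\ge 2$, the factor $(t^2-s)_2\equiv (b_2^2-s_2)_2\pmod 4$; when $\nu_2(a)-\nu_2(b)=2$ one must keep the linear-in-$u$ term and check modulo 4 or 8, distinguishing $b_2\equiv 1,3\pmod 8$. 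A direct bookkeeping of these residues, combined with the formula $t_2(t^2-s)_2\pmod 4$ in Table \ref{table: w_2(t) for 2v_2(t)=v(s),v_2(t)odd}, yields the three sub-sub-cases listed in 1(b), 2(b), 5(c), 6(b). The remainder of the argument is straightforward table-chasing.
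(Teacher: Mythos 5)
Your proposal follows the same route as the paper's proof: specialize the $p=2$ analysis of Lemma \ref{lem: p=2} to $s=-12k^4$ (so $\nu_2(s)\equiv 2\pmod 4$ and $s_2\equiv 13\pmod{16}$), split according to $\nu_2(a)-\nu_2(b)$, and read the constant values off Table \ref{table: w_2^*(t),v_2(s)2mod4} when $2\nu_2(b)\neq\nu_2(s)$ and Table \ref{table: w_2(t) for 2v_2(t)=v(s),v_2(t)odd} when $2\nu_2(b)=\nu_2(s)$; that is exactly the paper's (very terse) argument. Two remarks. First, your inline claim in case 3 that constancy forces $\nu_2(s)-2\nu_2(b)\le -4$ is too strong: for $\nu_2(b)=\nu_2(a)-1$ the value $\nu_2(s)-2\nu_2(b)=-2$ also gives a constant $w_2^*$ (the row $-2$ of Table \ref{table: w_2^*(t),v_2(s)2mod4} falls into the ``otherwise'' branch when $s_2\equiv 13\pmod{16}$, hence is insensitive to $t_2$ and equals $-1$), and this is precisely condition 7 of the proposition; your concluding cross-reference to Table \ref{table1} would recover it, but the sketch as written drops it. Second, in the subcase $2\nu_2(b)=\nu_2(s)$ with $\nu_2(a)-\nu_2(b)=2$ you are actually more cautious than the paper, which simply asserts $(t^2-s)_2\equiv(b_2^2-s_2)_2\pmod 4$ there as well; your observation that the linear term $2^{\nu_2(a)-\nu_2(b)-1}a_2b_2u$ survives modulo $4$ when $\nu_2(a)-\nu_2(b)=2$ (and that $t_2\bmod 8$ then also moves with the parity of $u$) is correct, so if you pursue this you must verify explicitly that the combined $u$-dependence of $t_2\bmod 8$ and $(t^2-s)_2\bmod 4$ cancels in the entries of Table \ref{table: w_2(t) for 2v_2(t)=v(s),v_2(t)odd} before claiming agreement with items 1(b), 2(b), 5(c), 6(b); the paper does not carry out this check.
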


We use Theorem \ref{thm: condition sab} (Theorem \ref{mainthm}) continuously through the proofs of these Propositions.
	\begin{proof}[Proof of Proposition \ref{prop: s=-12k^4 p5}] 
 For $p\geq 5$, we have $\nu_p(s)\equiv 0 \pmod 4$. 
		For $\nu_p(b)<\nu_p(a)$, we reorganize parts of Table \ref{table: w_p^*(t)} to get
		\begin{center}
		\begin{tabular}{l l l l r}
		\hline
		 $p$&$\nu_p(s)-2\nu_p(b)$	&$\nu_p(b)$	&$(\tfrac{b_p}{p})$&$w_p^*(t)$	\\
		\hline
		$1\pmod 4$   &$>0$ &$0\pmod{2}$&$-1$		&$1$\\
			&	&	&$\phantom{-}1$	&$-1$	\\
			&	        &$1\pmod 2$		&			&1\\
	       &0			&		&			&1	\\
			&$<0$	&   	&			&1\\
		$3\pmod 4$	&$>0$	&$0\pmod{2}$&$\phantom{-}1$	&1\\
				&	&	&$-1$&$-1$\\
				&		&$1\pmod 2$	&	&$-1$			\\
				&0			&	&			&1\\
				&$<0$	&				&	&1\\
				\hline
		\end{tabular}
		\end{center}
		from which the result follows. For $\nu_p(s)/2 \leq \nu_p(a)\leq \nu_p(b)$, $w_2^*(t)=1$. 
%
		\end{proof}
		
		\begin{proof}[Proof of Proposition \ref{prop: s=-12k^4 p3}]
	For $p=3$, we have $\nu_3(s)\equiv 1 \pmod 4$ and $s_3=-4k_3^4\equiv 2 \pmod 3$. We refer to Section \ref{sec:mainthm_pf_p=3}, in particular Table \ref{table:p=3_rank}.
	\end{proof}
	
	\begin{proof}[Proof of Proposition \ref{prop: s=-12k^4 p2}]
	For $p=2$, we have $\nu_2(s)\equiv 2 \pmod 4$ and $s_2\equiv 1 \pmod 4$. For $2\nu_2(b)\neq \nu_2(s)$, we refer to the proof Theorem \ref{thm: condition sab} and Table \ref{table: w_2^*(t),v_2(s)2mod4}. For $2\nu_2(b)= \nu_2(s)$, note that since $\nu_2(s)\equiv 2 \pmod 4$, we have that $\nu_2(b)$ is odd. Then for $\nu_2(b)\leq \nu_2(a)-2$, we saw from Theorem \ref{thm: condition sab} that $\nu_2(t^2-s)-2\nu_2(t)=2$ and $(t^2-s)_2 \equiv (b_2^2-s_2)_2\pmod 4$ and so from Table \ref{table: w_2(t) for 2v_2(t)=v(s),v_2(t)odd} we get
	            \begin{align*}
	                w_2^*(t)=\begin{cases}
                    	1,  \ &b_2\equiv 1\pmod{4}\\
                    	& b_2\equiv 3\pmod{8}\text{ and }(b_2^2-s)_2\equiv3\pmod{4}\\
                    	-1, \ &\text{otherwise}
                    	\end{cases}
                \end{align*}
    and this concludes the proof.
	\end{proof}

We can be slightly more specific in the hypothesis on $s$ so that it is easier to produce examples. Recall again Washington's example which has the form $\mathcal{F}_{-2^23^5}(12u+18)$; it is a subfamily of a certain $\mathcal{F}_s$ where $s=-12k^4$ with $k=3$. This family has generic rank $1$, but the root number is $-1$ for all fibres, which means that there is no rank jump. One way to generalize Washington's example but instead with families in which there is a rank jump is to study the case $k$ is a prime. So let $k=q$, an odd prime number. Then the Propositions \ref{prop: s=-12k^4 p5}, \ref{prop: s=-12k^4 p3} and \ref{prop: s=-12k^4 p2} reduce to the following corollary: 

\begin{cor}\label{prop: k=p} Let $s=-12q^4$ for some prime number $q\geq 5$.	\begin{enumerate}
	\item 
		If $q\equiv 1 \pmod 4$, then $w_q^*(t)=1$ if and only if one of the following
			\begin{enumerate}		
			\item 
			$\nu_q(a) \leq \nu_q(b)$ and $\nu_q(b)\geq 2$
			\item $1\leq \nu_q(b) < \nu_q(a)$
			\item $0= \nu_q(b) < \nu_q(a)$ and $(\frac{b}{q})=-1$
			\end{enumerate}	
		and $w_q^*(t)=-1$ if and only if $0=\nu_q(b)<\nu_q(a)$ and $(\frac{b}{q})=1$.
		
		If $q\equiv 3\pmod 4$, then $w_q^*(t)=1$ if and only if one of the following
		\begin{enumerate}
		\item 
		$\nu_q(a) \leq \nu_q(b)$ and $\nu_q(b)\geq 2$
		\item $2\leq \nu_q(b) < \nu_q(a)$
		\item $0= \nu_q(b) < \nu_q(a)$ and $(\frac{b}{q})=1$
	and $w_q^*(t)=-1$ if and only if
		\begin{enumerate}
		 \item $0=\nu_q(b)<\nu_q(a)$ and $(\frac{b}{q})=-1$
		 \item $1=\nu_q(b)<\nu_q(a)$.
		\end{enumerate}	
\end{enumerate}		
	\item $w_3^*(t)=1$ for any case.
	\item $w_2^*(t)=1$ if and only if one of the following
		\begin{enumerate}
		\item $\nu_2(b)\geq 3$ and $\nu_2(a)=\nu_2(b)+2$ 
		\item $\nu_2(b)\geq 3$ and $\nu_2(a)=\nu_2(b)+1$ 
		\item $\nu_2(b)=3$ and $\nu_2(a)>\nu_2(b)+2$
		\item $\nu_2(b)\geq1$ and $\nu_2(a)=\nu_2(b)+2$
		\item 
		$\nu_2(b)=1$ and $\nu_2(a)\geq 3$ and
		    \begin{enumerate}
		        \item $b_2\equiv 1 \pmod 4$, or
		        \item $b_2\equiv 3 \pmod 8$ and $(b_2^2-s_2)_2 \equiv 3 \pmod 4$
		    \end{enumerate}
		\item $\nu_2(b)=0$ and $\nu_2(a)>2$ and $b\equiv 1, 3, 7 \pmod 8$
		\item $\nu_2(b)=0$ and $\nu_2(a)=2$ and $b \equiv 3\pmod 4$
		\item $3\leq \nu_2(a)\leq \nu_2(b)$
		\end{enumerate}
	and $w_2^*(t)=-1$ if and only if one of the following
		\begin{enumerate}
		\item $3<\nu_2(b)<\nu_2(a)-2$,
		\item $\nu_2(b)=2$ and $\nu_2(a)\geq 3$,
		\item $\nu_2(b)=1$ and $\nu_2(a)\geq3$ and $b_2\equiv3\pmod 8$ and $(b_2^2-s)_2\equiv1\pmod4$,
		\item $\nu_2(b)=1$ and $\nu_2(a)\geq3$ and $b_2\equiv7\pmod 8$,
		\item $\nu_2(b)=0$ and $\nu_2(a)>2$ and $b_2\equiv 5 \pmod 8$.
		\end{enumerate}
	\end{enumerate}
\end{cor}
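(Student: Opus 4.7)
The plan is to specialize Propositions \ref{prop: s=-12k^4 p5}, \ref{prop: s=-12k^4 p3}, and \ref{prop: s=-12k^4 p2} to the case $s=-12q^4$ with $q\geq 5$ prime. The key numerical inputs are $\nu_q(s)=4$, $\nu_3(s)=1$, $\nu_2(s)=2$, and $s_2 = -3q^4 \equiv 5 \pmod 8$ (so in particular $s_2 \equiv 1 \pmod 4$), together with the fact that the only primes dividing $s$ are $2$, $3$, and $q$, so $w_p^*(t)=1$ automatically for all other $p$.

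First I would treat $p=q$. Condition $\nu_p(s)\leq 2\nu_p(b)$ of Proposition \ref{prop: s=-12k^4 p5} becomes $\nu_q(b)\geq 2$, while $\nu_p(s)>2\nu_p(b)$ forces $\nu_q(b)\in\{0,1\}$; the parity sub-cases 2(a), 2(b), 2(c) then correspond exactly to the three situations $\nu_q(b)=0$ with $(b_q/q)=\pm 1$, and $\nu_q(b)=1$. Sorting by residue of $q$ modulo $4$ gives the two lists in part (1) after merging condition 1 and condition 3 (both demand $\nu_q(b)\geq 2$) with the matching 2(c) case. For $p=3$ (part (2)), note $\nu_3(s)=1$ makes the inequality $\nu_3(s)\leq 2\nu_3(b)+3$ of Proposition \ref{prop: s=-12k^4 p3} trivially true, and the equations $\nu_3(s)=2\nu_3(b)+3+4\ell$ and $\nu_3(s)=2\nu_3(b)+1+4\ell$ with $\ell\geq 1$ have no non-negative solutions in $\nu_3(b)$. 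Therefore only conditions 1(a) and 2 of that proposition can trigger, and they cover all $(a,b)$; hence $w_3^*(t)=1$ always.

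For $p=2$ (part (3)), the specialization $\nu_2(s)=2$ kills most sub-cases of Proposition \ref{prop: s=-12k^4 p2} because the equations of the form $\nu_2(s)=2\nu_2(b)+2+4\ell$ or $\nu_2(s)=2\nu_2(b)+6$ force a negative $\nu_2(b)$. The surviving possibilities reduce to $\nu_2(b)\in\{0,1,3\}$ (those giving $\nu_2(s)-2\nu_2(b)\in\{2,0,-4\}$), plus the regime $\nu_2(b)\geq 3$ in which the $w_2^*$ stabilizes to its sign at the ``deep'' range. Then I would match each surviving branch of the proposition to the corresponding clause of the corollary: the $\nu_2(b)=3$ branch of 1(a) becomes (c); branches 2(a), 3 and 4 become (a), (b), and (h); the $\nu_2(b)=1$ case of 1(b)/2(b) gives (e) via the condition on $b_2$ and $(b_2^2-s)_2$; and the $\nu_2(b)=0$ cases 1(c)/2(c) collapse to (f) and (g) after reading the admissible $b_2 \pmod 8$. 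The negative list is obtained by the parallel specialization of sub-cases (5)--(7) of Proposition \ref{prop: s=-12k^4 p2}.

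The main obstacle is not the logic but the bookkeeping in part (3): the list of sub-cases in Proposition \ref{prop: s=-12k^4 p2} is long, and several of them coalesce into single clauses of the corollary after the substitution $\nu_2(s)=2$ and $s_2\equiv 1\pmod 4$. To avoid errors I would organize the verification as a table indexed by $\nu_2(b)$ (and when relevant by $b_2 \pmod 8$), discard rows whose $\nu_2(s)$-column is incompatible with $\nu_2(s)=2$, and only then read off the resulting list, checking against the stated clauses of the corollary one by one.
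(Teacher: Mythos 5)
Your proposal is correct and follows essentially the same route as the paper: the paper's proof likewise just substitutes $\nu_q(s)=4$, $\nu_3(s)=1$, $\nu_2(s)=2$ (and $s_2\equiv 1\pmod 4$) into Propositions \ref{prop: s=-12k^4 p5}, \ref{prop: s=-12k^4 p3} and \ref{prop: s=-12k^4 p2}, discards the sub-cases whose $\nu_p(s)$-equations become unsolvable, and reads off the surviving clauses. Your bookkeeping matches the paper's (which is in fact terser, especially for $p=2$); the only cosmetic slip is that your list of surviving $2$-adic valuations omits $\nu_2(b)=2$, which reappears anyway in the negative list via $\nu_2(s)=2\nu_2(b)-2$.
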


\begin{proof} In this case, $s=-3 \cdot 2^2 q^4$, so $\nu_q(s)=4, \nu_3(s)=1$ and  $\nu_2(s)=2$. 
	For $p \geq 5$, if $p\neq q$, then $w_p^*(t)=1$. Else if $p=q$, then we have $\nu_p(s)/2=2$. For $\nu_p(b)<\nu_p(s)/2$, we get $\nu_p(b)=0$ or $1$. Hence the conditions $\nu_p(b)\equiv 0 \pmod 2$ and $(\frac{b_p}{p})=-1$ forces $\nu_p(b)$ to be 0 and $b=b_p$. Similarly, the condition $\nu_p(b)\equiv 1 \pmod 2$ forces $\nu_p(b)$ to be 1.
	
	For $p=3$, since $\nu_3(s)=1$, $\nu_3(s)-2\nu_3(b)\leq 1$ and so we cannot have the cases $\nu_3(s)=2\nu_3(b)+1+4\ell$ nor $\nu_3(s)=2\nu_3(b)+3+4\ell$ for some $\ell\geq 1$ and hence there is no case such that $w_3^*(t)=-1$. Then the conditions for $w_3^*(t)=1$ become $0\leq \nu_3(b) < \nu_3(a)$ and $0\leq \nu_3(a)\leq \nu_3(b)$ which is the same as no conditions.
	
	For $p=2$, since $\nu_2(s)=2$, $\nu_2(s)-2\nu_2(b)\leq 2$ i.e. $(\nu_2(s)-2)/2 \leq \nu_2(b)$. This eliminates some cases from Proposition \ref{prop: s=-12k^4 p2}. The remaining we obtain by evaluating $\nu_2(s)=2$.
	\end{proof}

\begin{exam} Let $s=-12\cdot5^4$,
$a=2^{3+\alpha_2} 3^{\alpha_3} 5^{2+ \alpha_5} a_0$, $b=2^2 3^{\beta_3} 5 b_0$
	where $a_0,b_0$ are coprime to $s$ and $\alpha_2, \alpha_3, \alpha_5, \beta_3$ are non-zero integers. Then by Corollary \ref{prop: k=p}, $w^*_5(au+b)=1, w_3^*(au+b)=1$ and $w_2^*(au+b)=-1$ and  so by \eqref{eq: 9} we have $W(\mathcal{F}_s(au+b))=1$. 
	According to the Parity Conjecture combined with Silverman's Specialization Theorem, the rank is at least $2$ for all but finitely many integer fibres of $\mathcal{F}_{s}(au+b)$.
	\end{exam}

\newpage
\appendix

\section{Formulas} \label{sec: app} \addtocontents{toc}{\setcounter{tocdepth}{2}}

	\subsection{Formulas for $w_p^*(t)$ on $\mathcal{F}_s(t)$}\label{modifiedlocalrootnumber}
We report here the formulas of the functions $w_p^*$ on the fibres of an elliptic surface $\mathcal{F}_s(t)$. 
\begin{prop}\label{prop: w_p^*(t)} For $p\geq 5$,
	\begin{table}[H] 
	\caption{} \label{table: w_p^*(t)}
	\centering
	\begin{tabular}{l l l l r}
	\hline
	$\nu_p(s$)	&$2\nu_p(t)-\nu_p(s)$ 	&$\nu_p(t)$&$\nu_p(t^2-s)$	&$w_p^*(t)$\\
	\hline
	$1\pmod{2}$	&$<0$	&$0\pmod{2}$	&&$-(\frac{3t_p}{p})$\\
						&			&$1\pmod{2}$	&&$(\frac{-1}{p})$\\
						&$>0$	&						&&$(\frac{2}{p})$\\
	$0\pmod{4}$&$<0$	&$0\pmod{2}$	&&$-(\frac{3t_p}{p})$\\
						&			&$1\pmod{2}$	&&$(\frac{-1}{p})$\\
						&$>0$	&						&&1\\
						&$0$	&$0\pmod{6}$	&$2,4 \pmod{6}$	&$(\frac{-3}{p})$\\
						&			&						&otherwise	&1\\
						&   	&$2\pmod{6}$	&$0,2 \pmod{6}$	&$(\frac{-3}{p})$\\
						&			&						&otherwise	&1\\
						&   	&$4\pmod{6}$	&$0,4 \pmod{6}$	&$(\frac{-3}{p})$\\
						&			&						&otherwise	&1\\			
	$2\pmod{4}$&$<0$	&$0\pmod{2}$	&&$-(\frac{3t_p}{p})$\\
						&			&$1\pmod{2}$	&&$(\frac{-1}{p})$\\
						&$>0$	&						&&$(\frac{-1}{p})$\\
						&$0$	&$1\pmod{6}$	&$1,3 \pmod{6}$	&$(\frac{3}{p})$\\
						&			&						&otherwise	&$(\frac{-1}{p})$\\
						&   	&$3\pmod{6}$	&$1,5 \pmod{6}$	&$(\frac{3}{p})$\\
						&			&						&otherwise	&$(\frac{-1}{p})$\\
						&   	&$5\pmod{6}$	&$3,5 \pmod{6}$	&$(\frac{3}{p})$\\
						&			&						&otherwise	&$(\frac{-1}{p})$\\		
	\hline
	\end{tabular}
	\end{table}
\end{prop}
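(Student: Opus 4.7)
The plan is to derive these formulas by translating the classical local root-number tables at $p\ge 5$ into the modified local function $w_p^*$. From \eqref{eq: w_p^*}, for $p\geq 5$ we have
\[
w_p^*(\mathcal{F}_s(t)) = W_p(\mathcal{F}_s(t))\cdot\left(\frac{-1}{p}\right)^{\nu_p(t^2-s)}\cdot\chi_p(t),
\]
where $\chi_p(t)=\left(\frac{-3}{p}\right)$ if $\nu_p(t^2-s)\equiv 2,4\pmod 6$ and $\chi_p(t)=1$ otherwise. Thus the whole task reduces to computing the local root number $W_p$ for every possible reduction type of the fibre, and then multiplying by these two correction factors. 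The raw computation of $W_p$ has already been carried out on this exact family in both \cite{Chi} and \cite{BDD}, so my role is the arithmetic bookkeeping that converts their tables into the present formulation.

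The first concrete step is a Tate-algorithm computation at $p$. From
\[
c_4=2^43^2(t^2-s),\quad c_6=-2^63^3st(t^2-s),\quad \Delta=-2^63^3s(t^2-s)^2,
\]
the $p$-adic triple is $(\nu_p(c_4),\nu_p(c_6),\nu_p(\Delta))=(\nu_p(t^2-s),\ \nu_p(s)+\nu_p(t)+\nu_p(t^2-s),\ \nu_p(s)+2\nu_p(t^2-s))$. I would minimize by subtracting $(4,6,12)$ from this triple the maximum permissible number of times, then read off the Kodaira type from the minimal triple. The natural case division, visible in Table~\ref{table: w_p^*(t)}, is dictated by (a) the parity of $\nu_p(s)$ mod 4, (b) the sign of $2\nu_p(t)-\nu_p(s)$, which for nonzero sign determines $\nu_p(t^2-s)=\min(2\nu_p(t),\nu_p(s))$, and (c) in the degenerate case, the residue of $\nu_p(t^2-s)$ modulo 6. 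Once the Kodaira type is identified, Rohrlich's tables \cite{Rohrlich} give $W_p$ directly: essentially $\left(\frac{-3}{p}\right)$ (with $c_6$-twist data) for types $II,II^*,IV,IV^*$, $\left(\frac{-1}{p}\right)$ for $III,III^*,I_n^*$, and $1$ for $I_0$. The factors $\left(\frac{3t_p}{p}\right)$, $\left(\frac{2}{p}\right)$, $\left(\frac{-1}{p}\right)$ appearing in the first block of the table track precisely these twist data, and the column $\nu_p(t)\pmod 2$ records the parity of the exponent under the minimizing shift.

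The main obstacle is the equal case $2\nu_p(t)=\nu_p(s)$, since there $\nu_p(t^2-s)\ge\nu_p(s)$ is not determined by $\nu_p(t)$ and $\nu_p(s)$ alone; one has to keep the valuation $\nu_p(t^2-s)$ as a free parameter. As it grows, the minimal Kodaira type cycles through $II,III,IV,I_0^*,IV^*,III^*,II^*$, giving a natural period-$12$ behaviour in $\nu_p(\Delta)$ which, after pairing with the twist $\chi_p(t)$, produces the period-$6$ subdivisions indexed by $\nu_p(t)\pmod 6$ and $\nu_p(t^2-s)\pmod 6$ in the bottom blocks of the table. The remaining entries (when $2\nu_p(t)-\nu_p(s)\neq 0$) are then straightforward: the Kodaira type is determined by $\nu_p(s)\bmod 12$ in the range after minimization, Rohrlich's formula yields $W_p$, and multiplying by $\left(\frac{-1}{p}\right)^{\nu_p(t^2-s)}\chi_p(t)$ gives the listed value of $w_p^*$.
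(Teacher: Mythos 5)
Your high-level plan --- compute the local root number $W_p$ of each fibre via Tate's algorithm and Rohrlich's tables (or import it from \cite{BDD} and \cite{Chi}) and then multiply by a correction factor --- is the same as the paper's, which gives no argument for this proposition beyond citing \cite{Chi} and \cite{BDD} ``slightly modified'' (compare the one-sentence proof of the $p=2$ analogue). The genuine problem is your conversion formula. By \eqref{def:local_functions} the modified local function is $w_p^*(t)=W_p(\mathcal{F}_s(t))\left(\tfrac{-1}{p}\right)^{\nu_p(t^2-s)}$ and nothing else; the factor $\chi_p(t)$ you append is not an additional correction but is already part of $W_p$ itself, since Rohrlich gives $W_p=\left(\tfrac{-3}{p}\right)$ exactly for the types IV, IV$^*$ that occur in the subcases with $\nu_p(t^2-s)\equiv 2,4\pmod 6$. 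Multiplying by $\chi_p$ on top of $W_p$ double-counts this contribution: for instance if $p\nmid s$, $p\nmid t$ and $\nu_p(t^2-s)=2$, the fibre has type IV, $W_p=\left(\tfrac{-3}{p}\right)$, and the table correctly records $w_p^*=\left(\tfrac{-3}{p}\right)$, whereas your formula returns $\left(\tfrac{-3}{p}\right)\cdot 1\cdot\left(\tfrac{-3}{p}\right)=1$. Carried out as written, your derivation therefore contradicts every $\left(\tfrac{-3}{p}\right)$-entry of the table whenever $p\equiv 2\pmod 3$.

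Two further slips would corrupt other entries. First, your Rohrlich dictionary is misremembered: at $p\ge 5$ types II, II$^*$, $I_0^*$ and $I_n^*$ give $W_p=\left(\tfrac{-1}{p}\right)$, types III, III$^*$ give $W_p=\left(\tfrac{-2}{p}\right)$ (not $\left(\tfrac{-1}{p}\right)$), and only IV, IV$^*$ give $\left(\tfrac{-3}{p}\right)$. The entry $\left(\tfrac{2}{p}\right)$ in the $\nu_p(s)$-odd block is precisely $\left(\tfrac{-2}{p}\right)\cdot\left(\tfrac{-1}{p}\right)^{\nu_p(s)}$ coming from type III/III$^*$, and under your dictionary it would come out as $1$. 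Second, the rows $w_p^*=-\left(\tfrac{3t_p}{p}\right)$ (the case $2\nu_p(t)<\nu_p(s)$ with $\nu_p(t)$ even) are fibres with genuinely \emph{multiplicative} reduction at $p$ --- possible even though the surface has no multiplicative place over $\Q(t)$ --- where $W_p=-\left(\tfrac{-c_6^{\min}}{p}\right)$; your plan runs only through the potentially good types and attributes the $t_p$-dependence to ``twist data'', so it would miss the split/non-split computation that actually produces these entries. Fixing the conversion to $w_p^*=W_p\cdot\left(\tfrac{-1}{p}\right)^{\nu_p(t^2-s)}$ and using the correct local root numbers for all reduction types, including $I_n$, recovers the table.
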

	
	\subsection{Formulas for $w_3^*(t)$ on $\mathcal{F}_s(t)$}
	
\begin{prop}\label{prop: w_3^*(t)}  For $p=3$,
	\begin{itemize}
	\item    For $\nu_3(s)\equiv 1\pmod{2}$,
		\begin{table}[H]
		\caption{} \label{table: w_3^*(t),v_3(s)odd}
		\centering
		\begin{tabular}{l l r}
		\hline
		$\nu_3(s)$	&$\nu_3(s)-2\nu_3(t)$	&$w_3^*(t)$\\
		\hline
		$1 \pmod{4}$	&$<-1$						&$1$\\
			&$-1$						&$-(\frac{s_3}{3})$\\
			&$1, 3$						&$1$\\
			&$1\pmod{4}, >1$		&$-1$\\
			&$3\pmod{4}, >3$	&$-(\frac{t_3}{3})$\\
		$3 \pmod{4}$	&$1$							&$(\frac{s_3}{3})$\\
			&$1\pmod{4}, >1$		&$-(\frac{t_3}{3})$\\
			&otherwise				&$-1$\\
		\hline
		\end{tabular}
		\end{table}
		
	\item    For $\nu_3(s)\equiv 0\pmod{4}$,
		\begin{itemize}
		\item For $\nu_3(s)\neq 2\nu_3(t)$,
			\begin{table}[H]
			\caption{}
			\centering
			\begin{tabular}{l l r}
			\hline
			$\nu_3(s)-2\nu_3(t)$	&$\nu_3(t)$	&$w_3^*(t)$\\
			\hline
			$<-2$	&						&1\\
			$-2$		&						&$(\frac{t_3}{3})$\\
			$>0$	&$0\pmod{2}$	&$-1$\\
						&$1\pmod{2}$	&$-(\frac{t_3}{3})$\\
						\hline
			\end{tabular}
			\end{table}
			
		\item For $\nu_3(s)=2\nu_3(t)$, $w_3^*(t)=1$ if and only if 
			\begin{itemize}
			\item    $\nu_3(t^2-s)-2\nu_3(t)=0$ and $s_3\equiv 2 \pmod{3}$ and $s_3t_3\not\equiv 2, 4 \pmod{9}$
			\item    or one of the following
				\begin{table}[H]
				\caption{}
				\centering
				\begin{tabular}{l l}
				\hline
				$\nu_3(t^2-s)-2\nu_3(t)$	& $t_3(t^2-s)_3$\\
				\hline
				$0\pmod{6}, \ >0$	&$\not\equiv 7, 8 \pmod{9}$	\\
				$1 \pmod{6}$	&$2\pmod{3}$				\\
				$2 \pmod{6}$	&$1\pmod{3}$				\\
				$3\pmod{6}$	&$ 1, 2 \pmod{9}$	\\
				$4 \pmod{6}$	&$2\pmod{3}$				\\
				$5 \pmod{6}$	&$1\pmod{3}$				\\
				\hline
				\end{tabular}	
				\end{table}
				\end{itemize}	
		\end{itemize}
		
	\item    For $\nu_3(s)\equiv 2\pmod{4}$,
		\begin{itemize}
		\item    For $\nu_3(s)\neq 2\nu_3(t)$,
			\begin{table}[H]
			\caption{}
			\centering
			\begin{tabular}{l l r}
			\hline
			$\nu_3(s)-2\nu_3(t)$	&$t_3$	&$w_3^*(t)$\\
			\hline
			$<-2$	&												&$1$\\
			$-2$		&$s_3\pmod{3}$		&1\\
						&$-s_3\pmod{3}$	&$-1$\\
			2			&$s_3\pmod{3}$		&1\\
						&$-s_3\pmod{3}$	&$-1$\\
			$0\pmod{4}, >0$	&								&$-(\frac{t_3}{3})$\\
			$2\pmod{4}, >2$	&								&$-1$\\			\hline
			\end{tabular}
			\end{table}
			
		\item    For $\nu_3(s)=2\nu_3(t)$, $w_3^*(t)=1$ if and only if
			\begin{itemize}
			\item    $\nu_3(t^2-s)-2\nu_3(t)=0$ and $s_3\equiv 2 \pmod{3}$ and $s_3t_3\not\equiv 2, 4 \pmod{9}$
			\item    or one of the following
				\begin{table}[H]
				\caption{}
				\centering
				\begin{tabular}{l l}
				\hline
				$\nu_3(t^2-s)-2\nu_3(t)$	& $t_3(t^2-s)_3$	\\
				\hline
				$0\pmod{6}, \ >0$	&$\not\equiv 1,2 \pmod{9}$		\\
				$1 \pmod{6}$	&$1\pmod{3}$				\\
				$2 \pmod{6}$	&$2\pmod{3}$				\\
				$3\pmod{6}$	&$ 7,8 \pmod{9}$	\\
				$4 \pmod{6}$	&$1\pmod{3}$				\\
				$5 \pmod{6}$	&$2\pmod{3}$				\\
				\hline
				\end{tabular}	
				\end{table}
			\end{itemize}
		\end{itemize}
	\end{itemize}
\end{prop}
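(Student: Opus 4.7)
The plan is to compute $W_3(\mathcal{F}_s(t))$ via Rizzo's explicit tables \cite{Riz} for the local root number at $p=3$, then multiply by $(-1)^{\nu_3(t^2-s)}$ per the definition \eqref{def:local_functions} of $w_3^*$. The starting data are
\begin{align*}
c_4(t) &= 2^4 3^2 (t^2-s), \\
c_6(t) &= -2^6 3^3 s t (t^2-s), \\
\Delta(t) &= -2^6 3^3 s (t^2-s)^2,
\end{align*}
which give $\nu_3(c_4) = 2 + \nu_3(t^2-s)$, $\nu_3(c_6) = 3 + \nu_3(s) + \nu_3(t) + \nu_3(t^2-s)$, and $\nu_3(\Delta) = 3 + \nu_3(s) + 2\nu_3(t^2-s)$.

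First I would reduce the Weierstrass equation to a $3$-minimal integral model by repeated substitutions $(x,y)\mapsto(3^{2k}x,3^{3k}y)$, which decrease $(\nu_3(c_4),\nu_3(c_6),\nu_3(\Delta))$ by $(4,6,12)$ at each step. The minimal triple and the resulting Kodaira type (via Tate's algorithm) are what drive Rizzo's formula for $W_3$. I would branch the computation according to $\nu_3(s)\bmod 4$ and the sign of $2\nu_3(t)-\nu_3(s)$; in the generic sub-case $\nu_3(s)\neq 2\nu_3(t)$ we have $\nu_3(t^2-s)=\min(2\nu_3(t),\nu_3(s))$, so the three valuations above depend only on $\nu_3(s)$ and $\nu_3(t)$, and Rizzo's table outputs $W_3$ as a function of the leading 3-adic digits $s_3$ and $t_3$. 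Multiplying by the parity sign $(-1)^{\nu_3(t^2-s)}$ yields the first two tables of the Proposition; in particular, the sub-table for $\nu_3(s)$ odd collapses several of Rizzo's rows because only types II, II$^*$, IV, IV$^*$ and I$_m^*$ can occur at this place (it is insipid).

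Much of the raw $W_3$-data was already tabulated by Chinis \cite{Chi} and Bettin--David--Delauney \cite{BDD} for the same family, so my task largely reduces to re-indexing their output by the pair $(\nu_3(s),\nu_3(t))$ and verifying the minimal-model reduction and the sign insertion. The main obstacle is the degenerate case $\nu_3(s)=2\nu_3(t)$, where cancellation between $t^2$ and $s$ makes $\nu_3(t^2-s)$ a genuinely independent parameter: here $\nu_3(c_6^{\min})$ and the Kodaira type at the reduced model depend on the leading 3-adic digit $(t^2-s)_3$ together with $t_3$. I would parametrise by $\nu_3(t^2-s)-2\nu_3(t)\bmod 6$; the period $6$ arises precisely because the minimisation step has period $(4,6,12)$ in $(\nu_3(c_4),\nu_3(c_6),\nu_3(\Delta))$, so Rizzo's classification cycles through II $\to$ III $\to$ IV $\to$ I$_0^*$ $\to$ IV$^*$ $\to$ III$^*$ $\to$ II$^*$ and returns. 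For the types II, II$^*$, IV, IV$^*$ one must additionally check whether the relevant quadratic form $A^2+3B^2$ represents the leading 3-adic part, which translates into the conditions on $t_3(t^2-s)_3\bmod 9$ appearing in the last two tables.

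Finally, combining: for each residue class of $\nu_3(t^2-s)-2\nu_3(t)\bmod 6$ I would read off the Kodaira type, extract $W_3$ from Rizzo's table under the congruence constraint on $t_3(t^2-s)_3$, multiply by $(-1)^{\nu_3(t^2-s)}$, and compare with the stated entries. The slight asymmetry between the $\nu_3(s)\equiv 0\pmod 4$ and $\nu_3(s)\equiv 2\pmod 4$ tables will come from the extra factor of $3$ appearing in $c_6$ (and thus in the leading digit $s_3$ governing whether $s_3\equiv\pm 1\pmod 3$), which swaps the conditions $\not\equiv 7,8\pmod 9$ and $\not\equiv 1,2\pmod 9$ on $t_3(t^2-s)_3$. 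No step is individually difficult; the challenge is purely bookkeeping across the roughly $7\cdot 3 = 21$ elementary sub-cases.
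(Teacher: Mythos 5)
Your plan is essentially the paper's own route: the paper gives no independent derivation of this proposition, saying only that the formulas are obtained from the $W_3$-computations of \cite{Chi} and \cite{BDD} (which in turn rest on Rizzo's tables applied to the minimal model of $\mathcal{F}_s(t)$ at $3$), reorganized by $\nu_3(s)\bmod 4$ and by the sign of $\nu_3(s)-2\nu_3(t)$, and then multiplied by the correction factor $(-1)^{\nu_3(t^2-s)}$ from \eqref{def:local_functions} --- exactly the re-indexing, degenerate-case analysis at $\nu_3(s)=2\nu_3(t)$, and sign insertion you describe. Your side remarks (the precise cycle of Kodaira types, the attribution of the $\bmod\ 9$ asymmetry) are heuristic rather than load-bearing, so the proposal stands as a correct account of the same approach.
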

\subsection{Formulas for $w_2^*(t)$ on $\mathcal{F}_s(t)$}

\begin{prop}\label{prop: w_2^*(t)}  For $p=2$,
	\begin{itemize}
	\item    For $\nu_2(s)\equiv 0 \pmod{4}$,
	    \begin{itemize}
	        \item For $\nu_2(s) \neq 2\nu_2(t) $,
	        
			\begin{table}[H]
			\caption{} \label{table: w_2^*(t),v_2(s)0mod4}
			\centering
			\begin{tabular}{l l l r}
			\hline
			$\nu_2(s)-2\nu_2(t)$	&$s_2$	&$t_2$ &$w_2^*(t)$\\
			\hline
			$< -4$	&$1,3,7,13$ or $15\pmod{16}$&						&$-1$\\
					&$5,9$ or $11\pmod{16}$					&						&$\phantom{-}1$\\
			$-4$	&$3,5,7,9,11$ or $15\pmod{16}$		&	&$-1$\\
					&$1$ or $13\pmod{16}$			&	&$\phantom{-}1$\\
			$-2$	&$3\pmod{4}$				&						&$\phantom{-}1$\\
					&1 or $13\pmod{16}$	&$1\pmod{4}$	&$\phantom{-}1$\\
					&5 or $9\pmod{16}$	&$3\pmod{4}$	&$\phantom{-}1$\\
					&otherwise					&			&$-1$\\
			2		&					&$s_2\pmod{4}$	&$\phantom{-}1$\\
					&					&$-s_2\pmod 4$	&$-1$\\
			$2\pmod{4}, >2$	&&$3\pmod{4}$	&$\phantom{-}1$\\
										&&$1\pmod{4}$	&$-1$\\
			4		&						&1 or $5 \pmod{8}$ 			&$\phantom{-}1$\\
					&$1\pmod{4}$	&$3 \pmod{8}$ 				&$\phantom{-}1$\\
					&$3\pmod{4}$	&$7 \pmod{8}$				 	&$\phantom{-}1$\\
					&otherwise		&										&$-1$\\
			$0\pmod{4}, >4$	&	&$7\pmod{8}$	 				&$\phantom{-}1$\\
										&	&otherwise						&$-1$\\
			\hline
			\end{tabular}
			\end{table}
		\item For $\nu_2(s)=2\nu_2(t)$, the conditions are listed in Table \ref{table: w_2(t) for 2v_2(t)=v(s),v_2(t)even}.			
			
		\begin{table}[H] 
		\caption{}
		\label{table: w_2(t) for 2v_2(t)=v(s),v_2(t)even}
		\centering
		\begin{tabular}{l l r} 
		\hline
		$\nu_2(t^2-s)-2\nu_2(t)$ &conditions&$w_2^*(t)$\\
		\hline
		$1$&$t_2\equiv1\pmod{8}$, $(t^2-s)_2\equiv1,7\pmod{8}$&$1$\\
		&$t_2\equiv3\pmod{8}$, $(t^2-s)_2\equiv5,7\pmod{8}$&\\
		&$t_2\equiv5\pmod{8}$, $(t^2-s)_2\equiv3,5\pmod{8}$&\\
		&$t_2\equiv7\pmod{8}$, $(t^2-s)_2\equiv1,3\pmod{8}$&\\
		&otherwise&$-1$\\
		2&$t_2(t^2-s)_2\equiv3\pmod{4}$&1\\
		&otherwise&$-1$\\
		3&$t_2\equiv1\pmod{8}$, $(t^2-s)_2\equiv3,5\pmod{8}$&$1$\\
		&$t_2\equiv3\pmod{8}$, $(t^2-s)_2\equiv1,3\pmod{8}$&\\
		&$t_2\equiv5\pmod{8}$, $(t^2-s)_2\equiv1,7\pmod{8}$&\\
		&$t_2\equiv7\pmod{8}$, $(t^2-s)_2\equiv5,7\pmod{8}$&\\
		&otherwise&$-1$\\
		5&$(t^2-s)_2\equiv1\pmod{8}$, $t_2\equiv1,3,7\pmod{8}$&$1$\\
&$(t^2-s)_2\equiv3\pmod{8}$, $t_2\equiv1,3,5\pmod{8}$&\\
&$(t^2-s)_2\equiv5\pmod{8}$, $t_2\equiv1,3,5\pmod{8}$&\\
&$(t^2-s)_2\equiv7\pmod{8}$, $t_2\equiv1,5,7\pmod{8}$&\\
&otherwise&\\
		otherwise&&$t_2\pmod{4}$\\
\hline
%
		\end{tabular} 
		\end{table}
			    \end{itemize}
	\item    For $\nu_2(s)\equiv 1 \pmod{4}$,
			\begin{table}[H]
			\caption{}
			\centering
			\begin{tabular}{l l l r}
			\hline
			$\nu_2(s)-2\nu_2(t)$	&$s_2$	&$t_2$ &$w_2^*(t)$\\
			\hline
			$\leq -2$	&$3,5\pmod{8}$	&							&$-1$\\
					&$1,7\pmod{8}$			&							&$\phantom{-}1$\\
			$-1$	&$1,7 \pmod{8}$	&$1\pmod{4}$		&$\phantom{-}1$\\
			&$3,5\pmod{8}$&$3\pmod{4}$&$\phantom{-}1$\\
					&otherwise				&							&$-1$\\
			1		&$1\pmod{4}$			&1 or $7\pmod{8}$	&$\phantom{-}1$\\
					&$3\pmod{4}$			&1 or $3\pmod{8}$	&$\phantom{-}1$\\
					&otherwise				&								&$-1$\\
			5		&								&1, 5, or 7	$\pmod{8}$	&$\phantom{-}1$\\
					&								&otherwise				&$-1$\\
		$1\pmod{4}, >5$	&				&$7\pmod{8}$			&$\phantom{-}1$\\
					&								&otherwise				&$-1$\\
			3		&$3\pmod 4$			&							&$\phantom{-}1$\\
					&	$1\pmod 4$			&							&$-1$\\
		$3\pmod{4}, >3$	&	&$3\pmod 4$					&$\phantom{-}1$\\
					&					&$1\pmod 4$					&$-1$\\
			\hline
			\end{tabular}
			\end{table}
		
	\item    For $\nu_2(s)\equiv 2 \pmod{4}$,
	    \begin{itemize}
	        \item For $\nu_2(s)\neq 2 \nu_2(t)$,
			\begin{table}[H]
			\caption{} \label{table: w_2^*(t),v_2(s)2mod4}
			\centering
			\begin{tabular}{l l l r}
			\hline
			$\nu_2(s)-2\nu_2(t)$	&$s_2$	&$t_2$	&$w_2^*(t)$\\
			\hline
			$< -4$	&$1,3,5,9,13$ or $15\pmod{16}$&									&$\phantom{-}1$\\
					&$7,11\pmod{16}$				&										&$-1$\\ 
			$-4$	&$1,5,7,9,11$ or $13\pmod{16}$	&										&$\phantom{-}1$\\
					&$3,15\pmod{16}$				&										&$-1$\\ 
			$-2$	
					&3 or $7\pmod{16}$	&$1\pmod{4}$		&$\phantom{-}1$\\
					&11 or $15 \pmod{16}$	&$3\pmod{4}$				&$\phantom{-}1$\\
&otherwise	&				&$-1$\\
			$0\pmod{4}, >0$	&			&$3\pmod{4}$					&$\phantom{-}1$\\
										&			&$1\pmod{4}$					&$-1$\\ 
			2		&$1\pmod{8}$			&3, 5, or $7 \pmod{8}$		&$\phantom{-}1$\\
					&$5\pmod{8}$			&1, 3, or $7 \pmod{8}$		&$\phantom{-}1$\\
					&otherwise				&										&$-1$\\ 
			6		&								&$3\pmod 4$					&$\phantom{-}1$\\
					&								&$1\pmod 4$					&$-1$\\
			$2\pmod{4}, >6$	&			&$7 \pmod{8}$ 				&$\phantom{-}1$\\
					&								&otherwise						&$-1$\\
			\hline
			\end{tabular} 
			\end{table}

		\item For $\nu_2(s)=2\nu_2(t)$, the conditions for the function $w_2^*$ are listed in Table \ref{table: w_2(t) for 2v_2(t)=v(s),v_2(t)odd}.

		\begin{table}[H] 
		\caption{}
		\label{table: w_2(t) for 2v_2(t)=v(s),v_2(t)odd}
		\centering
		\begin{tabular}{l l l l}
		\hline
		$\nu_2(t^2-s)-2\nu_2(t)$	&$t_2$	&$(t^2-s)_2 \pmod{4}$	&$w_2^*(t)$\\
		\hline
		$0,1,3,5\pmod{6}$,$\not=1,3$ &&&$t_2\pmod{4}$\\
		$1$&$1\pmod{8}$&&$1$\\
		&$3\pmod{8}$&&$(t^2-s)_2\pmod{4}$\\
		&$5\pmod{8}$&&$-1$\\
		&$7\pmod{8}$&&$-(t^2-s)_2\pmod{4}$\\
		$2$&$1\pmod{4}$&&$1$\\	
		&$3\pmod{8}$&&$-(t^2-s)_2\pmod{4}$\\
		&$7\pmod{8}$&&$-1$\\
		$2,4\pmod{6}$, $>4$&&&$-(t^2-s)_2\pmod{4}$\\
		$3$&&&$-1$\\
		$4$&$1\pmod{8}$& $5\pmod{8}$&$1$\\
		&$5\pmod{8}$& $1\pmod{8}$&\\
		&$3\pmod{8}$& $1,5,7\pmod{8}$&\\
		&$7\pmod{8}$& $1,3,5\pmod{8}$&\\
		&otherwise&&$-1$\\
		\hline
		\end{tabular} 
		\end{table}
				    \end{itemize}		
					
	\item    For $\nu_2(s)\equiv 3 \pmod{4}$,
			\begin{table}[H]
			\caption{}
			\centering
			\begin{tabular}{l l l r}
			\hline
			 $\nu_2(s)-2\nu_2(t)$	&$s_2$	&$t_2$	&$w_2^*(t)$\\
			\hline
			$\leq -2$	&$1,7\pmod{8}$	&		&$-1$\\
					    &$3,5\pmod{8}$	&		&$\phantom{-}1$\\ 
			$-1$	&$1$ or $3\pmod{8}$	&$1\pmod{4}$			&$-1$\\
				    &	                &$3\pmod{4}$	&$\phantom{-}1$\\
				    &$5$ or $7\pmod{8}$	&$3\pmod{4}$	&$\phantom{-}1$\\
				    &	&$1\pmod{4}$			        &$-1$\\ 
			1		&&$s_2,s_2+2 \pmod{8}$			    &$\phantom{-}1$\\
					&&otherwise							&$-1$\\ 
			$1\pmod{4}, >1$	&			    &$3\pmod 4$	&$\phantom{-}1$\\	
					&								&$1\pmod 4$			&$-1$\\
			3		&$1\pmod{4}$			&3 or $5 \pmod{8}$	&$\phantom{-}1$\\
					&$3\pmod{4}$			&1 or $3 \pmod{8}$ 	&$\phantom{-}1$\\
					&otherwise				&								&$-1$\\
		$3\pmod{4}, >3$	&				&$7\pmod{8}$ 			&$\phantom{-}1$\\
					&								&otherwise				&$-1$\\
					\hline
			\end{tabular}
			\end{table}
	\end{itemize}
\end{prop}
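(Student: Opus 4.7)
The plan is to derive these formulas by combining Rizzo's explicit tables for the local root number $W_2$ at the prime $2$ \cite{Riz} with the definition $w_2^*(t) = \mathrm{sgn}(t^2-s)\left(\frac{-1}{t^2-s}\right)_2 W_2(\mathcal{F}_s(t))$ from \eqref{def:local_functions}. Closely related tables already appear in \cite{Chi} and \cite{BDD} for the purpose of computing the average root number; I would adapt them, making the Jacobi-symbol correction explicit, and re-index by the parameters $\nu_2(s) \bmod 4$ and $\nu_2(s) - 2\nu_2(t)$, which are the natural parameters for our family.

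First I would record the $2$-adic data of $\mathcal{F}_s(t)$: from $c_4 = 2^4 3^2(t^2-s)$, $c_6 = -2^6 3^3 s t(t^2-s)$, $\Delta = -2^6 3^3 s (t^2-s)^2$ one reads off $\nu_2(c_4) = 4 + \nu_2(t^2-s)$, $\nu_2(c_6) = 6 + \nu_2(s) + \nu_2(t) + \nu_2(t^2-s)$, and $\nu_2(\Delta) = 6 + \nu_2(s) + 2\nu_2(t^2-s)$. I would then $2$-minimise by the standard admissible change of variables $(x,y) \mapsto (2^{2k}x, 2^{3k}y)$, iterating until $\min(\nu_2(c_4)/4, \nu_2(c_6)/6) < 1$ or $\nu_2(\Delta) < 12$, and read off the Kodaira type and $W_2$ from Rizzo's tables. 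These tables depend only on $\nu_2(c_4), \nu_2(c_6), \nu_2(\Delta)$ together with a bounded amount of information about the unit parts $c_4/2^{\nu_2(c_4)}, c_6/2^{\nu_2(c_6)}, \Delta/2^{\nu_2(\Delta)}$ modulo powers of $2$ at most $32$, so in each case we are reduced to a finite verification.

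The case structure emerges from two observations. First, $\nu_2(s) \bmod 4$ governs the minimal model and hence the residual Kodaira type. Second, when $2\nu_2(t) \neq \nu_2(s)$ we have $\nu_2(t^2-s) = \min(2\nu_2(t), \nu_2(s))$, with unit part $(t^2-s)_2 \equiv -s_2$ or $t_2^2$ modulo a fixed power of $2$ depending on which term dominates, so that each row of the tables is pinned down by a finite amount of $2$-adic data on $s_2 \bmod 16$ and $t_2 \bmod 8$. The delicate case is $2\nu_2(t) = \nu_2(s)$, where $\nu_2(t^2-s)$ is no longer determined by $\nu_2(t)$ and $\nu_2(s)$ and must appear as an independent parameter; this is what produces the separate Tables \ref{table: w_2(t) for 2v_2(t)=v(s),v_2(t)even} and \ref{table: w_2(t) for 2v_2(t)=v(s),v_2(t)odd}, with the further subdivision by the parity of $\nu_2(t)$ reflecting the fact that $\mathrm{sgn}(t^2-s)$ and $(-1/(t^2-s))_2$ behave differently in those two regimes.

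The main obstacle is bookkeeping: one must track the minimalisation step, the Kodaira type, Rizzo's value of $W_2$, and the Jacobi-symbol correction uniformly across all combinations of $\nu_2(s), \nu_2(t), s_2 \bmod 16, t_2 \bmod 8$, and $(t^2-s)_2 \bmod 8$. I would manage this by first tabulating $W_2$ alone, reproducing the relevant fragment of Rizzo's tables, and only at the end multiplying through by $\mathrm{sgn}(t^2-s)(-1/(t^2-s))_2$; since the sign factor depends only on whether $t^2-s$ is positive (which, in the archimedean sense, is automatic when $\nu_2(t^2-s)$ is finite and $s$ is small compared to $t$ or vice versa) and the Jacobi-symbol factor depends only on $\nu_2(t^2-s)$ and $(t^2-s)_2 \bmod 4$, this final multiplication yields the compact entries in the stated tables. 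Crosschecks against \cite{Chi} on parity-biased specializations would be used to validate entries at the boundaries between cases.
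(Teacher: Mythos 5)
Your proposal is correct and follows essentially the same route as the paper: the paper's own proof simply takes the formulas for $w_2(t)=W_2(\mathcal{F}_s(t))$ already tabulated in \cite[Proposition 41]{BDD} (themselves computed from Rizzo's tables exactly as you describe), reorganizes them by $\nu_2(s)\bmod 4$, and multiplies by the correction factor $(t^2-s)_2\bmod 4$, which is the same finite $2$-adic verification you outline. The only difference is that you propose to rederive the $W_2$ fragment from \cite{Riz} directly rather than citing \cite{BDD}, and you correctly identify the key structural point that $2\nu_2(t)=\nu_2(s)$ forces $\nu_2(t^2-s)$ to enter as an independent parameter.
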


	\begin{proof}
	The proof is obtained from the formulas for $w_2(t)$ in \cite[Proposition 41]{BDD} by reorganizing the formulas by values of $\nu_2(s)\pmod 4$ instead of those of $\nu_2(s)-2\nu_2(t)$, then by computing $w_2^*(t)=w_2(t)\cdot(t^2-s)_2$. \end{proof}

\bibliographystyle{alpha}

\bibliography{bibliography}


\end{document}